\theoremstyle{plain}
\newtheorem{assumption}{Assumption}
\newtheorem{theorem}{Theorem}[section]
\newtheorem{corollary}[theorem]{Corollary}
\newtheorem{lemma}[theorem]{Lemma}
\newtheorem{proposition}[theorem]{Proposition}
\theoremstyle{definition}
\newtheorem{definition}[theorem]{Definition}
\newtheorem{example}[theorem]{Example}
\newtheorem{examples}[theorem]{Examples}
\newtheorem{preremark}[theorem]{Remark}
\newenvironment{remark}{\begin{preremark}\normalfont}{\end{preremark}}
\newcommand{\e}{\mathcal E}
\newcommand{\F}{\mathcal F}
\newcommand{\R}{\mathbb R}
\begin{document}
\title{Scale-invariant Boundary Harnack Principle in Inner Uniform Domains}
\author{Janna Lierl \and Laurent Saloff-Coste}
\maketitle

\begin{abstract}
We prove a scale-invariant boundary Harnack principle in inner uniform domains in the context of local, regular Dirichlet spaces. For inner uniform Euclidean domains, our results apply to divergence form operators that are not necessarily symmetric, and complement earlier results by H.~Aikawa and A.~Ancona.
\end{abstract}

\noindent
{\bf 2010 Mathematics Subject Classification:} 31C256, 35K20, 58J35, 60J60; 31C12, 58J65, 60J45. \\
{\bf Keywords:} Inner Uniform domains, Dirichlet space, boundary Harnack principle. \\
{\bf Acknowledgement:} Research partially supported by NSF grant DMS 1004771.
{\bf Address:} Malott Hall, Department of Mathematics, Cornell University, Ithaca, NY 14853, United States.

\section*{Introduction}

The boundary Harnack principle is a property of a domain that provides control over the ratio of two harmonic functions in that domain near some part of the boundary where the two functions vanish. Whether a given domain satisfies the boundary Harnack principle depends on the geometry of its boundary and, in fact, there is more than one kind of boundary Harnack principle. For a Euclidean domain $\Omega$, two versions found in the literature are as follows.
\begin{enumerate}
\item 
We say that the \emph{boundary Harnack principle} holds on $\Omega$ if, for any domain $V$  and any compact $K\subset V$ intersecting the boundary $\partial \Omega$, there exists a positive constant $A=A(\Omega,V,K)$ such that for any two positive functions $u$ and $v$ that are harmonic in $\Omega$ and vanish continuously (except perhaps on a polar set) along $V \cap \partial \Omega$, we have
 \[  \frac{u(x)}{u(x')} \leq A \frac{v(x)}{v(x')}, \quad \forall x, x' \in K \cap \Omega.\]
\item 
We say that the \emph{scale-invariant boundary Harnack principle} holds on $\Omega$, if there exist positive constants $A_0$, $A_1$ and $R$, depending only on $\Omega$, with the following property. Let $\xi \in \partial \Omega$ and $r \in (0,R)$. Then
for any two positive functions $u$ and $v$ that are harmonic in $B(\xi,A_0 r) \cap \Omega$ and vanish continuously (except perhaps on a polar set) along $B(\xi,A_0 r) \cap \partial \Omega$, we have
 \[  \frac{u(x)}{u(x')} \leq A_1 \frac{v(x)}{v(x')}, \quad \forall x, x' \in B(\xi,r) \cap \Omega. \]
\end{enumerate}

A third version, important for our purpose and perhaps more natural, would replace the Euclidean balls in (ii) by the inner balls of the domain $\Omega$.

A property similar to (i) was first introduced by Kemper (\cite{Kem72}). The scale-invariant boundary Harnack principle (ii) on Lipschitz domains was proved independently in \cite{Anc78} and \cite{Wu78}, a non-scale invariant version was proved in \cite{Dah77}.

Bass and Burdzy (\cite{BB91}) used probabilistic arguments to prove property (i) on what they call twisted H\"older domains of order $\alpha \in (1/2,1]$. The scale-invariant boundary Harnack principle is in general not true on H\"older domains.
Aikawa (\cite{Aik01}) proved the scale-invariant boundary Harnack principle on uniform domains in Euclidean space. This result was extended to inner uniform domains in \cite{ALM03}. Ancona gave a different proof of these results in \cite{Anc07}. For a summary of the relationships between the inner uniformity conditions and other conditions appearing in the literature, see \cite{Aik04}. Other works on the boundary Harnack principle include \cite{BV96a, BV96b}.

In \cite{GyryaSC}, Gyrya and Saloff-Coste generalized Aikawa's approach to uniform domains in strictly local, symmetric Dirichlet spaces of Harnack-type that admit a carr\'e du champ. Moreover, they deduced that the boundary Harnack principle also holds on inner uniform domains, by considering the inner uniform domain as a uniform domain in a different metric space, namely the completion of the inner uniform domain with respect to its inner metric.

In this paper, we extend the result of \cite{GyryaSC} in two directions. First, we consider "Dirichlet forms" that allow lower order terms and non-symmetry. Second, we prove the boundary Harnack principle directly on inner uniform domains without assuming the existence of a carr\'e du champ. 

We follow Aikawa's approach, but with the Euclidean distance replaced by the inner distance of the domain. A crucial Lemma in our proof concerns the relation between balls in the inner metric and connected components of balls in the metric of the ambient space, see Lemma \ref{lem:metrics are comparable}. This relation was already used in \cite{Anc07} to prove a boundary Harnack principle on inner uniform domains in Euclidean space. Ancona (\cite{Anc07}) also treated second order uniformly elliptic operators with some lower order terms, under the additional condition that the domain is uniformly regular. Following Aikawa's line of reasoning, we do not need the domain to be uniformly regular. 

Our main result is Theorem \ref{thm2:bHP for u}. We now explain how it applies to Euclidean space. Formally, let 
\begin{align} \label{eq:L}
Lf = \sum_{i,j=1}^n \partial_j (a_{i,j} \partial_i f)
           - \sum_{i=1}^n b_i \partial_i f  + \sum_{i=1}^n \partial_i (d_i f) - c f.
\end{align}
Assume that the coefficients $a=(a_{i,j})$, $b=(b_i)$, $d=(d_i)$, $c$ are smooth and satisfy $c - \mathrm{div} b \geq 0$, $c - \mathrm{div} d \geq 0$, and, $\forall \xi \in \R^n$,
$\sum_{i,j} a_{i,j} \xi_i \xi_j \geq \epsilon |\xi|^2$, $\epsilon >0$. 

\begin{theorem}
Let $L$ be the operator defined above and let $\Omega \subset \R^n$ be an inner uniform domain. There exists $C=C(\Omega) > 0$ and for each $R \in (0, C \cdot \emph{\textrm{diam}}(\Omega))$ there exist $A_0, A_1 \in (0,\infty)$, depending on $\Omega$, $R$ and on the coefficients $a$, $b$, $c$ and $d$, such that the scale-invariant boundary Harnack principle holds in the following form. For any $\xi \in \partial_{\Omega} \Omega$, $r \in (0,R)$ and any two positive functions $u$ and $v$ that are local weak solutions of $Lu=0$ in $B_{\widetilde\Omega}(\xi,A_0 r) \cap \Omega$ and vanish weakly along $B_{\widetilde\Omega}(\xi,A_0 r) \cap \partial_{\Omega} \Omega$, we have
 \[  \frac{u(x)}{u(x')} \leq A_1 \frac{v(x)}{v(x')},\quad \forall x, x' \in B_{\widetilde\Omega}(\xi,r). \]
Moreover, if $b=d=c=0$ then the constants $A_0$ and $A_1$ are independent of $R$.
\end{theorem}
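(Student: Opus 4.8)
The plan is to deduce this statement from the paper's main result, Theorem~\ref{thm2:bHP for u}, by realizing $L$ as a (non-symmetric) bilinear form on $L^2(\R^n,dx)$ and checking that the structural hypotheses of that theorem hold in this concrete Euclidean setting. Concretely, I would introduce
\[
\e(f,g)=\int_{\R^n}\Big(\sum_{i,j}a_{i,j}\,\partial_i f\,\partial_j g+\sum_i b_i\,(\partial_i f)\,g+\sum_i d_i\,f\,\partial_i g+c\,fg\Big)\,dx
\]
on the (localized) Sobolev domain, so that the associated operator is $L$ in the weak sense; a G\aa rding inequality coming from uniform ellipticity, together with the sector condition discussed below, makes this a coercive closed form in the sense of Ma--R\"ockner. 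I would then split $\e=\e^{(s)}+(\e-\e^{(s)})$, where $\e^{(s)}(f,g)=\int_{\R^n}\sum_{i,j}a^{\mathrm{sym}}_{i,j}\,\partial_i f\,\partial_j g\,dx$ is the strongly local, regular, symmetric Dirichlet form built from the symmetric part of $a$, and the remainder collects the antisymmetric part of $a$ and the three lower-order terms. Uniform ellipticity makes the intrinsic distance of $\e^{(s)}$ bi-Lipschitz to the Euclidean distance, and the classical Moser--Poincar\'e theory for uniformly elliptic divergence-form operators shows that $(\R^n,dx,\e^{(s)})$ is volume doubling and supports a scale-invariant $L^2$ Poincar\'e inequality; thus it is of Harnack type, the standing hypothesis on the ambient space in Theorem~\ref{thm2:bHP for u}.

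Next I would verify the hypotheses on the non-symmetric and lower-order part. Since $a,b,c,d$ are smooth, they and $\operatorname{div}b$, $\operatorname{div}d$ are bounded on every region that is bounded in the intrinsic metric (in particular on all of $\overline\Omega$ when $\Omega$ is bounded), and since we only ever work at scales $r<R<C\operatorname{diam}(\Omega)$ this suffices; in this range ellipticity together with these bounds yields the (weak) sector condition controlling $\e-\e^{(s)}$ by $\e^{(s)}$, with constants allowed to depend on $R$ and on the coefficients. The sign conditions $c-\operatorname{div}b\ge 0$ and $c-\operatorname{div}d\ge 0$ are precisely what is needed for the constant function $1$ to be a weak supersolution of $Lu=0$ and of $L^*u=0$ — equivalently, for the semigroups of $L$ and $L^*$ to be sub-Markovian — which is the other standing assumption. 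Finally I would record the dictionary between the PDE and Dirichlet-form vocabularies: a positive local weak solution of $Lu=0$ in an open set is exactly an $\e$-harmonic function there, and ``$u$ vanishes weakly along $B_{\widetilde\Omega}(\xi,A_0r)\cap\partial_\Omega\Omega$'' is exactly the abstract boundary-vanishing condition (suitable truncations of $u$ lie in the local form domain with Dirichlet conditions along that portion of $\partial_\Omega\Omega$).

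With the hypotheses in place I would simply invoke Theorem~\ref{thm2:bHP for u}. Inner uniformity of $\Omega\subset\R^n$ in the Euclidean metric coincides, up to the bi-Lipschitz equivalence above, with inner uniformity in the intrinsic metric of $\e^{(s)}$, so the completion $\widetilde\Omega$, the boundary $\partial_\Omega\Omega=\widetilde\Omega\setminus\Omega$, and the inner balls $B_{\widetilde\Omega}(\xi,r)$ appearing in the statement are exactly the objects produced by the abstract theorem; the geometric constant $C=C(\Omega)$ and the admissible range of $R$ come from the inner-uniformity constants of $\Omega$. This gives the asserted inequality on $B_{\widetilde\Omega}(\xi,r)$ for every $\xi\in\partial_\Omega\Omega$ and $r\in(0,R)$, with $A_0,A_1$ depending on $\Omega$, on $R$, and on $a,b,c,d$, the $R$-dependence entering only through the coefficient bounds and through the non-scale-invariance of the underlying parabolic Harnack inequality once lower-order terms are present.

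For the last sentence, observe that when $b=d=c=0$ the form reduces to $\e(f,g)=\int_{\R^n}\sum_{i,j}a_{i,j}\,\partial_i f\,\partial_j g\,dx$, the form of a pure divergence-form uniformly elliptic operator with a possibly non-symmetric but scale-homogeneous coefficient matrix. Under a dilation $x\mapsto rx$ such an operator is carried to another one with identical ellipticity and sector bounds, so the elliptic/parabolic Harnack inequality, and with it every quantitative ingredient in the proof of Theorem~\ref{thm2:bHP for u}, becomes genuinely scale invariant; the $R$-dependence then drops out and $A_0,A_1$ may be chosen independently of $R$. The main obstacle I anticipate is not any new estimate but the bookkeeping in the previous two paragraphs: matching ``local weak solution'' and ``vanishes weakly'' with the abstract notions precisely enough to apply Theorem~\ref{thm2:bHP for u} verbatim, and verifying its structural assumptions (sector condition, sub-Markovianity) for the perturbed non-symmetric form uniformly over the relevant range of scales, i.e.\ controlling the antisymmetric part of $a$ together with $\operatorname{div}b$ and $\operatorname{div}d$.
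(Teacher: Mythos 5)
Your proposal is essentially the approach the paper intends: the introductory theorem is stated without separate proof precisely because the Example and Remark in Section~2 (which exhibit the decomposition of $\e$ into $\e^{\mathrm{s}}$, the killing measure $\kappa$, and the skew part, and assert that Assumptions~\ref{as2:e_t}--\ref{as2:p=0} hold for this Euclidean form) set up the specialization of Theorem~\ref{thm2:bHP for u} that you carry out. Your choice of reference form $\hat\e=\e^{(\mathrm{s})}$ (vs.\ the flat Laplacian form) is immaterial since the two are bi-Lipschitz comparable, and your identification of the sign conditions $c-\operatorname{div}b\geq0$, $c-\operatorname{div}d\geq0$ with sub-Markovianity of both semigroups is exactly right.

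One place where you add something beyond a mechanical substitution is the last paragraph. Tracking constants naively through Theorem~\ref{thm2:bHP for u} when $b=d=c=0$ gives $C_8=0$, so the $C_8R^2$ dependence disappears, but the stated dependence on $A_R=(1+C_0C_1/\alpha)^4(\cdots)^2$ with $\alpha=\min\{1,\lambda_{R,\xi}\}$ would still leave an apparent $R$-dependence (since $\lambda_R\gtrsim R^{-2}$ only bounds $\alpha$ from below by something that degrades as $R$ grows). Your dilation argument neatly sidesteps this: rescale so that the relevant radius is $1$; the rescaled coefficients $a_{i,j}(r\cdot)$ have the same ellipticity and sector bounds, the inner-uniformity constants of $\Omega/r$ match those of $\Omega$, and the Euclidean doubling/Poincar\'e constants are scale-invariant, so the abstract theorem applied at unit scale yields an $r$-independent $A_1$. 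This is a legitimate and arguably cleaner justification of the final claim than what one would obtain by re-examining the capacity estimate in the $\kappa=0$ case, and it also makes transparent why the restriction $r<R<C\cdot\operatorname{diam}(\Omega)$ (ensuring $R_\xi>0$ after rescaling) appears. The only caveat worth flagging is that your appeal to boundedness of $a,b,c,d,\operatorname{div}b,\operatorname{div}d$ ``on bounded regions'' tacitly assumes either that $\Omega$ is bounded or that one imposes uniform $L^\infty$ bounds; the paper's informal statement shares this implicit assumption, so this is not a gap in your argument relative to the paper.
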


Here, by a \emph{local weak solution} $u$ on a domain $U \subset \R^n$ we mean a function that is locally in the Sobolev space $W^1(U)$ of all functions in $L^2(U)$ whose distributional first derivatives can be represented by functions in $L^2(U)$, and satisfies
$\int Lu \, \psi = 0$ for all test functions $\psi$ in $W^1_0(U)$, the closure of $C^{\infty}_0(U)$ (the space of all smooth, compactly supported functions on $U$) in the $W^1$-norm $\Vert \cdot \Vert^2_2 + \Vert \nabla \cdot \Vert^2_2$. A weak solution $u$ \emph{vanishes weakly} along $\partial \Omega$ if $u$ is locally in $W^1_0(\Omega)$ near $U \cap \partial \Omega$.
See Section \ref{ssec:weak solutions}. The definition of a ball $B_{\widetilde\Omega}$ in the inner metric is given in Section \ref{ssec:uniform domains}, $\partial_{\Omega}B_{\widetilde\Omega}$ denotes the boundary of the ball with respect to its completion in the inner metric.

In Section \ref{sec:preliminaries} and \ref{sec:non-sym forms}, we review some general properties of Dirichlet spaces and describe the conditions that we impose on the space. Moreover, we state a localized version of the parabolic Harnack inequality for local weak solutions of the heat equation for second-order differential operators with lower order terms.
In Section \ref{sec:GF} we prove estimates for the heat kernel on balls and for the capacity of balls. After recalling the definition and some properties of inner uniform domains, we give estimates for Green functions on inner balls intersected with an inner uniform domain. In Section \ref{sec:BHP}, we give a proof of the boundary Harnack principle.

\section{Preliminaries} \label{sec:preliminaries}

\subsection{Local weak solutions} \label{ssec:weak solutions}

Let $X$ be a connected, locally compact, separable metrizable space, and let $\mu$ be a non-negative Borel measure on $X$ that is finite on compact sets and positive on non-empty open sets. 
Let $(\e,D(\e))$ be a strictly local, regular, symmetric Dirichlet form on $L^2(X,\mu)$. We denote by $(L,D(L))$ and $(P_t)_{t\geq0}$ the infinitesimal generator and the semigroup, respectively, associated with $(\e,D(\e))$. See \cite{FOT94}.

There exists a measure-valued quadratic form $d\Gamma$ defined by
 \[ \int f \, d\Gamma(u,u) = \e(uf,u) - \frac{1}{2} \e(f,u^2), \quad \forall f,u \in D(\e) \cap L^{\infty}(X), \]
and extended to unbounded functions by setting $\Gamma(u,u) = \lim_{n \to \infty} \Gamma(u_n,u_n)$, where $u_n = \max\{\min\{u,n\},-n\}$. Using polarization, we obtain a bilinear form $d\Gamma$. In particular,
\[ \e(u,v) = \int d\Gamma(u,v), \quad \forall u,v \in D(\e). \]

Let $U \subset X$ be open and connected. Set
 \[ \F_{\mbox{\tiny{{loc}}}}(U)  =  \{ f \in L^2_{\mbox{\tiny{loc}}}(U) : \forall \textrm{ compact } K \subset U, \ \exists f^{\sharp} \in D(\e),
                      f = f^{\sharp}\big|_K \textrm{ a.e.} \} \]
For $f,g \in \F_{\mbox{\tiny{loc}}}(U)$ we define $\Gamma(f,g)$ locally by $\Gamma(f,g)\big|_K = \Gamma(f^{\sharp},g^{\sharp})\big|_K$, where $K \subset U$ is compact and $f^{\sharp},g^{\sharp}$ are functions in $D(\e)$ such that $f = f^{\sharp}$, $g = g^{\sharp}$ a.e.~on $K$. 

Define
\begin{align*}
 \F(U)   &=  \{ u \in \F_{\mbox{\tiny{loc}}}(U) : \int_U |u|^2 d\mu + \int_U d\Gamma(u,u) < \infty \}, \\
\F_{\mbox{\tiny{c}}}(U)  &=  \{ u \in \F(U) : \textrm{ The essential support of } u \textrm{ is compact in } U \}.
\end{align*}

The \emph{intrinsic distance} $d := d_{\e}$ induced by $(\e,D(\e))$ is defined as
\[ d_{\e}(x,y) := \sup \big\{ f(x)-f(y): f \in \F_{\mbox{\tiny{loc}}}(X) \cap C(X), \, d\Gamma(f,f) \leq d\mu \big\},  \]
for all $x,y \in X$, where $C(X)$ is the space of continuous functions on $X$.
Consider the following properties of the intrinsic distance that may or may not be satisfied. They are discussed in \cite{Stu95geometry, SturmI}.
\begin{align}
& \textrm{The intrinsic distance } d  \textrm{ is finite everywhere, continuous, and defines } \nonumber\\
& \textrm{the original topology of } X. \tag{A1} \\
& (X,d) \textrm{ is a complete metric space. } \tag{A2}
\end{align}
Note that if (A1) holds true, then (A2) is by \cite[Theorem 2]{Stu95geometry} equivalent to 
\begin{align}
\forall x \in X, r > 0, \textrm{ the open ball } B(x,r) \textrm{ is relatively compact in } (X,d). \tag{A2'}
\end{align}
Moreover, (A1)-(A2) imply that $(X,d)$ is a geodesic space, i.e.~any two points in $X$ can be connected by a minimal geodesic in $X$. See \cite[Theorem 1]{Stu95geometry}.
If (A1) and (A2) hold true, then by \cite[Proposition 1]{SturmI},
\[  d_{\e}(x,y) := \sup \big\{ f(x)-f(y): f \in D(\e) \cap C_{\mbox{\tiny{c}}}(X), \, d\Gamma(f,f) \leq d\mu \big\}, \quad x,y \in X. \]
It is sometimes sufficient to consider property (A2') only on a subset $Y$ of $X$, that is,
\begin{align} \label{eq:completeness property}
\textrm{For any ball } B(x,2r) \subset Y, \ B(x,r) \textrm{ is relatively compact.} \tag{A2-$Y$}
\end{align}

\begin{definition} Let $V$ be an open subset of $U$. Set
\begin{align*}
 \F^0_{\mbox{\tiny{loc}}}(U,V) = & \{ f \in L^2_{\mbox{\tiny{loc}}}(V,\mu): \forall \textrm{ open } A \subset V \textrm{ rel.~compact in } \overline{U} \textrm{ with } \\
& \, d_{U}(A,U \setminus V) > 0, \, \exists f^{\sharp} \in \F^0(U): f^{\sharp} = f \, \mu \textrm{-a.e.~on } A \},
\end{align*}
where 
 \[ d_U(A, U \setminus V) = \inf \big\{ \textrm{length}(\gamma) \big| \gamma:[0,1] \to U \textrm{ continuous}, \gamma(0) \in A, \gamma(1) \in U \setminus V \big\} \]
 and
\[ \textrm{length}(\gamma) = \sup \left\{ \sum_{i=1}^n d_{\e}(\gamma(t_i),\gamma(t_{i-1})) : n \in \mathbb{N}, 0 \leq t_0 < \ldots < t_n \leq 1 \right\}. \]
\end{definition}

\begin{remark}
Suppose (A1), (A2-$Y$) are satisfied. Let $U \subset Y$. Then $d_U = d_{\e^D_U}$, where $\e^D_U$ is the Dirichlet-type form on $U$ defined in Definition \ref{def1:e^D_U} below. See, e.g., \cite{GyryaSC}.
\end{remark}

\begin{definition}
Let $V \subset U$ be open and $f \in \F_{\mbox{\tiny{c}}}(V)'$, the dual space of $ \F_{\mbox{\tiny{c}}}(V)$ (identify $L^2(X,\mu)$ with its dual space using the scalar product). A function $u: V \to \R$ is a \emph{local weak solution} of the Laplace equation $-Lu = f$ in $V$, if 
\begin{enumerate}
\item
$u \in \F_{\mbox{\tiny{loc}}}(V)$,
\item 
For any function $\phi \in \F_{\mbox{\tiny{c}}}(V), \ \e(u,\phi) = \int f \phi \, d\mu$.
\end{enumerate} 
If in addition
 \[ u \in \F^0_{\mbox{\tiny{loc}}}(U,V), \]
then $u$ is a local weak solution with \emph{Dirichlet boundary condition} along $\partial U$.
\end{definition}

For a time interval $I$ and a Hilbert space $H$, let $L^2(I \to H)$ be the Hilbert space of those functions $v: I \to H$ such that 
 \[  \Vert v \Vert_{L^2(I \to H)} = \left( \int_I \Vert v(t) \Vert_H^2 \, dt \right)^{1/2} < \infty. \]
Let $W^1(I \to H) \subset L^2(I \to H)$ be the Hilbert space of those functions $v: I \to H$ in $L^2(I \to H)$ whose distributional time derivative $v'$ can be represented by functions in $L^2(I \to H)$, equipped with the norm
 \[  \Vert v \Vert_{W^1(I \to H)} = \left( \int_I \Vert v(t) \Vert_H^2 + \Vert v'(t) \Vert_H^2 \, dt \right)^{1/2} < \infty. \]
Let
 \[ \F(I \times X) = L^2(I \to D(\e)) \cap W^1(I \to D(\e)'), \]
where $D(\e)'$ denotes the dual space of $D(\e)$.
Let
 \[ \F_{\mbox{\tiny{loc}}}(I \times U) \] 
be the set of all functions $u:I \times U \to \R$ such that for any open interval $J$ that is relatively compact in $I$, and any open subset $A$ relatively compact in $U$, there exists a function $u^{\sharp} \in \F(I \times U)$ such that $u^{\sharp} = u$ a.e. in $J \times A$.

Let
\[ \F_{\mbox{\tiny{c}}}(I \times U) = \{ u \in \F_{\mbox{\tiny{loc}}}(I \times U): u(t, \cdot) \textrm{ has compact support in } U \textrm{ for a.e. } t \in I \}. \]
For an open subset $V \subset U$, let $Q = I \times V$ and let 
 \[ \F^0_{\mbox{\tiny{loc}}}(U,Q) \]
be the set of all functions $u:Q \to \R$ such that for any open interval $J$ that is relatively compact in $I$, and any open set $A \subset V$ relatively compact in $\overline{U}$ with $d_U(A, U \setminus V) > 0$, there exists a function $u^{\sharp} \in \F^0(I \times U)$ such that $u^{\sharp} = u$ a.e. in $J \times A$.

\begin{definition}
Let $I$ be an open interval and $V \subset U$ open. Set $Q = I \times V$. A function $u: Q \to \R$ is a \emph{local weak solution} of the heat equation $\frac{\partial}{\partial t}u = Lu$ in $Q$, if
\begin{enumerate}
\item
$u \in \F_{\mbox{\tiny{loc}}}(Q)$,
\item 
For any open interval $J$ relatively compact in $I$,
 \[ \forall \phi \in \F_{\mbox{\tiny{c}}}(Q),\  \int_J \int_V \frac{\partial}{\partial t} u \, \phi  \, d\mu \, dt + \int_J \e(u(t,\cdot),\phi(t,\cdot)) dt = 0. \]
\end{enumerate} 
If in addition
 \[ u \in \F^0_{\mbox{\tiny{loc}}}(U,Q), \]
then $u$ is a local weak solution with \emph{Dirichlet boundary condition} along $\partial U$.
\end{definition}

\subsection{Volume doubling, Poincar\'e inequality, and Harnack inequality}

We say that $(X,\mu)$ satisfies the \emph{volume doubling property} on $Y$, if there exists a constant $D_Y \in (0,\infty)$ such that for every ball $B(x,2r) \subset Y$,
\begin{align} \label{eq:VD}
V(x,2r) \leq D_Y \, V(x,r), \tag{VD}
\end{align}
where $V(x,r) = \mu( B(x,r))$ denotes the volume of $B(x,r)$.

The symmetric Dirichlet space $(X,\mu,\e,D(\e))$ satisfies the weak \emph{Poincar\'e inequality} on $Y$ if there exists a constant $P_Y \in (0,\infty)$ such that for any ball $B(x,2r) \subset Y$,
\begin{align} \label{eq:PI}
\forall f \in D(\e), \  \int_{B(x,r)} |f - f_B|^2 d\mu  \leq  P_Y \, r^2 \int_{B(x,2r)} d\Gamma(f,f), \tag{PI}
\end{align}
where $f_B = \frac{1}{V(x,r)} \int_{B(x,r)} f d\mu$ is the mean of $f$ over $B(x,r)$.

For any $s \in \R$, $\tau > 0$, $\delta \in (0,1)$ and $B(x,2r) \subset Y$, define
\begin{align*}
 I &= \big(s - \tau r^2, s \big) \\
 B &= B(x,r) \\
 Q &= I \times B \\
 Q_-  &= \big( s - (3+\delta)\tau r^2/4, s - (3-\delta)\tau r^2/4 \big) \times \delta B \\
 Q_+  &= \big( s - (1+\delta)\tau r^2/4, s \big) \times \delta B.
\end{align*}

\begin{definition} \label{def:HI}
The operator $(L,D(L))$ satisfies the \emph{parabolic Harnack inequality} on $Y$ if, for any $\tau > 0$, $\delta \in (0,1)$, there exists a constant $H_Y(\tau, \delta) \in (0,\infty)$ such that for any ball $B(x,2r) \subset Y$, any $s \in \R$, and any positive function $u \in \F_{\mbox{\tiny{loc}}}(Q)$ with $\frac{\partial}{\partial t} = Lu$ weakly in $Q$, the following inequality holds.
\begin{align} \label{eq:parabolic HI}
\sup_{z \in Q_-} u(z)  \leq  H_Y \inf_{z \in Q_+} u(z) \tag{PHI}
\end{align}
Here both the supremum and the infimum are essential, i.e.~computed up to sets of measure zero.
\end{definition}
The parabolic Harnack inequality implies the \emph{elliptic Harnack inequality},
\begin{align}
\sup_{z \in B(x,r)} u(z) \leq H'_Y \inf_{z \in B(x,r)} u(z), \tag{EHI}
\end{align}
where $u$ is any positive function in $\F_{\mbox{\tiny{loc}}}(Q)$ with $Lu = 0$ weakly in $B(x,2r)$.
Recall also that (PHI) implies the H\"older continuity of local weak solutions.

\begin{theorem} \label{thm:main thm}
Let $(X, \mu, \e, D(\e))$ be a strictly local, regular, symmetric Dirichlet space. Assume that the intrinsic distance $d_{\e}$ satisfies \emph{(A1)-(A2)}.
Then the following properties are equivalent:
\begin{itemize}
\item
$(L,D(L))$ satisfies the parabolic Harnack inequality on $X$.
\item
The volume doubling condition and the Poincar\'e inequality are satisfied on $X$.
\item
The semigroup $(P_t)_{t>0}$ admits an integral kernel $p(t,x,y)$, $t>0$, $x,y \in X$, and there exist constants $c_1, c_2, c_3, c_4 > 0$ such that
 \[ \frac{ c_1 }{ V(x,\sqrt{t}) } \exp \left(- \frac{ d_{\e}(x,y)^2 }{ c_2 t } \right) \leq  p(t,x,y)
\leq  \frac{ c_3 }{ V(x,\sqrt{t}) } \exp \left(- \frac{ d_{\e}(x,y)^2 }{ c_4 t } \right) \]
for all $x,y \in X$ and all $t>0$.
\end{itemize}
\end{theorem}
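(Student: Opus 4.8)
The statement to prove is Theorem~\ref{thm:main thm}, the equivalence of (PHI), (VD)+(PI), and two-sided Gaussian heat kernel bounds for a strictly local, regular, symmetric Dirichlet space whose intrinsic distance satisfies (A1)-(A2). This is the celebrated Grigor'yan--Saloff-Coste characterization of the parabolic Harnack inequality, adapted here to the Dirichlet-space setting of Sturm; the plan is essentially to invoke and assemble the known implications rather than reprove them from scratch.

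The plan is to prove the cycle of implications (VD)+(PI) $\Rightarrow$ Gaussian bounds $\Rightarrow$ (PHI) $\Rightarrow$ (VD)+(PI). For the first implication I would appeal to Sturm's work: under (A1)-(A2) the space $(X,d_\e)$ is a locally compact, complete, geodesic length space, and Sturm's parabolic Harnack/heat kernel theory (\cite{SturmI}, and the companions on parabolic equations) shows that (VD) together with the scale-invariant (PI) yields the full two-sided Gaussian estimate for $p(t,x,y)$; the key analytic inputs are a Nash-type on-diagonal upper bound from (VD)+(PI), a Davies-Gaffney $L^2$ off-diagonal bound coming from strict locality (finite propagation speed for the intrinsic distance), an iteration (Moser or the integrated maximum principle) to pass from $L^2$ to pointwise upper bounds, and a chaining argument using geodesics in $(X,d_\e)$ for the matching lower bound. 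For Gaussian bounds $\Rightarrow$ (PHI), and for (PHI) $\Rightarrow$ (EHI) plus H\"older continuity, I would cite the standard equivalences, noting that the Gaussian lower bound near the diagonal plus the upper bound away from it, fed into the oscillation/Moser machinery, produces the local parabolic Harnack inequality on balls $B(x,2r)\subset X$ with constants depending only on $\tau,\delta$ and the structural constants $D_X,P_X$. Finally, for (PHI) $\Rightarrow$ (VD)+(PI) I would use the classical derivation: applying (PHI) to the heat kernel $p(\cdot,\cdot,y)$ itself yields on-diagonal matching bounds $p(t,x,x)\asymp 1/V(x,\sqrt t)$, from which volume doubling follows by comparing scales; and (PI) is recovered from (PHI) via the standard argument relating the spectral gap / Dirichlet form energy on a ball to the Harnack constant (e.g.\ through the $L^1$-to-$L^2$ smoothing of the killed semigroup, or Kusuoka--Stroock-type estimates).

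The main obstacle, and the reason this is stated as a theorem rather than a citation, is that the existing literature (Grigor'yan, Saloff-Coste, Sturm) states these equivalences under slightly different frameworks — Riemannian manifolds, graphs, or Dirichlet spaces with a carr\'e du champ and various completeness hypotheses — so the real work is checking that every step goes through under exactly the hypotheses (A1)-(A2) with a general strictly local regular symmetric Dirichlet form, and that the intrinsic distance $d_\e$ (rather than an a priori given metric) has all the properties used: that balls are relatively compact (A2'), that geodesics exist, that $d_\e$ generates the topology, and that cutoff functions with controlled energy $d\Gamma(\eta,\eta)\le C r^{-2}\,d\mu$ exist at every scale (which is where (A1) and the definition of $d_\e$ enter the Davies--Gaffney and Moser iteration steps). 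I would therefore organize the proof as a careful bookkeeping of these structural facts — most already recorded in \cite{Stu95geometry, SturmI} and, in the precise form needed here, in \cite{GyryaSC} — followed by the assembly of the three implications above, flagging at each step which reference supplies the argument in the required generality.
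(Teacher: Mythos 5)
Your proposal takes essentially the same approach as the paper: the paper's entire proof of Theorem~\ref{thm:main thm} is a one-line citation to \cite{SturmI}, \cite{SturmII}, \cite{SturmIII}, and \cite{SC02}, and your sketch is precisely a roadmap of how those references assemble into the cycle of implications, together with a correct flag that the main remaining work is bookkeeping that the cited results apply verbatim under (A1)--(A2) in the general strictly local regular symmetric Dirichlet-form setting. Nothing in your outline contradicts or diverges from the paper's (implicit) argument; you simply make explicit what the paper delegates to the references.
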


\begin{proof}
For a detailed discussion see \cite{SturmI}, \cite{SturmII}, \cite{SturmIII}, and \cite{SC02}.
\end{proof}

The following theorem is a special case of Theorem \ref{thm2:local VD+PI = local HI} below.
\begin{theorem} \label{thm:sym local VD + local PI = local HI}
Let $(X,\mu,\e,D(\e))$ be a strictly local, regular, symmetric Dirichlet space and $Y \subset X$. Suppose that $(\e,D(\e))$ satisfies \emph{(A1)}, \emph{(A2-$Y$)}, the volume doubling property \emph{(VD)} on $Y$ and the Poincar\'e inequality \emph{(PI)} on $Y$. Then $(L,D(L))$ satisfies the parabolic Harnack inequality on $Y$. The Harnack constant depends only on $D_Y, P_Y, \tau, \delta$.
\end{theorem}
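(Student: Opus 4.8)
The plan is to localize the classical equivalence "volume doubling + Poincaré $\Rightarrow$ parabolic Harnack inequality" (the global statement being Theorem \ref{thm:main thm}) to the subset $Y$, using the Dirichlet-type form $\e^D_U$ on suitable subdomains $U \subset Y$ as a bridge. The point is that (PHI) is a local statement: it only involves solutions on cylinders $Q = I \times B(x,r)$ with $B(x,2r) \subset Y$, and such solutions, together with the test functions $\phi \in \F_{\mbox{\tiny{c}}}(Q)$, "live" inside $Y$. So one first reduces matters to a genuine Dirichlet space to which Theorem \ref{thm:main thm} applies.

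First I would fix a ball $B(x_0, 2r) \subset Y$ and choose an intermediate relatively compact open set $U$ with $B(x_0,2r) \subset U$ and $\overline{U} \subset Y$ (possible by (A2-$Y$), which guarantees enough relative compactness). Consider the Dirichlet space associated with the part form $\e^D_U$ on $L^2(U,\mu)$ (the "Dirichlet-type form" referenced in the remark after the definition of $\F^0_{\mbox{\tiny{loc}}}$, and defined precisely in Definition \ref{def1:e^D_U}). By the cited remark, the intrinsic distance of $\e^D_U$ coincides with $d_U$, and for balls well inside $U$ this agrees with the ambient intrinsic distance $d_{\e}$; moreover $(U, \mu|_U, \e^D_U)$ satisfies (A1)–(A2) because $U$ is relatively compact. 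The hypotheses (VD) and (PI) on $Y$ transfer to the corresponding doubling and Poincaré inequalities for this localized space on balls of radius up to a fixed fraction of $\mathrm{dist}(x_0,\partial U)$, with constants controlled by $D_Y, P_Y$ only. Then Theorem \ref{thm:main thm} (applied, strictly speaking, in the form that only uses the hypotheses on the relevant range of scales — or one invokes the already-localized heat kernel two-sided bounds) yields the parabolic Harnack inequality for $\e^D_U$ on the ball $B(x_0,r)$, with Harnack constant $H(\tau,\delta)$ depending only on $D_Y, P_Y, \tau, \delta$.

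The last step is to check that a local weak solution $u$ of $\partial_t u = Lu$ in $Q = I \times B(x_0,r)$ in the sense of the ambient space (Definition after $\F^0_{\mbox{\tiny{loc}}}(U,Q)$) restricts to a local weak solution of the heat equation for $\e^D_U$ on a slightly smaller cylinder, and conversely that the Harnack estimate obtained for the $\e^D_U$-solution is exactly the estimate we want for $u$ on $Q_- $ and $Q_+$. This is where one must be careful: the weak formulation of the heat equation tested against $\phi \in \F_{\mbox{\tiny{c}}}(Q)$ only sees $\Gamma(u,\phi)$ on compact subsets of $B(x_0,r)$, on which $\e$ and $\e^D_U$ agree, so the two notions of "local weak solution on $Q$" coincide; the cylinders $Q_-, Q_+$ are compactly contained in $Q$, so no boundary effects enter. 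Assembling these pieces gives (PHI) on $Y$ with the asserted dependence of constants.

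The main obstacle I expect is bookkeeping rather than a deep new idea: one must make sure the passage through the localized form $\e^D_U$ does not distort the scales at which (VD) and (PI) are available (so that one genuinely gets (PHI) for \emph{all} balls $B(x,2r)\subset Y$, uniformly), and that the intrinsic-distance identifications are valid on exactly the range of balls used. A secondary technical point is that Theorem \ref{thm:main thm} as stated is global, so one either needs a version of the implication (VD)+(PI)$\Rightarrow$(PHI) that is insensitive to behavior at scales beyond $\mathrm{diam}$ of the region, or one builds an auxiliary complete Dirichlet space agreeing with $\e^D_U$ near $B(x_0,2r)$ and extended arbitrarily (but doubling-compatibly) outside — a standard gluing construction. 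Either way the Harnack constant is extracted from the doubling and Poincaré constants alone, which is what the theorem claims.
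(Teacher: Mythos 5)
There is a genuine gap, and it sits exactly where you flag your ``main obstacle.'' Your plan is to feed the localized data into the \emph{global} equivalence (Theorem \ref{thm:main thm}) by passing to the Dirichlet-type form $\e^D_U$ on a relatively compact $U$ with $B(x_0,2r)\subset U\subset\overline U\subset Y$. But the space $(U,\mu|_U,\e^D_U)$ does not satisfy the hypotheses of Theorem \ref{thm:main thm}: that theorem needs (A2) (completeness) and (VD), (PI) for \emph{all} balls of the space, and for balls centered at or near $\partial U$ neither doubling of $\mu(B(x,s)\cap U)$ nor a Poincar\'e inequality follows from (VD), (PI) on $Y$ --- an arbitrary open $U$ can have cusps or worse, and in any case the constants would depend on the geometry of $U$, not only on $D_Y,P_Y$, which destroys the asserted uniformity of the Harnack constant. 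Your two fallbacks do not repair this. Invoking ``a version of (VD)+(PI) $\Rightarrow$ (PHI) that only uses the hypotheses on the relevant range of scales'' is circular: that scale-and-location-restricted implication \emph{is} the theorem being proved (take $Y$ a ball). And the ``standard gluing construction'' --- extending $\e^D_U$ near $B(x_0,2r)$ to a complete Dirichlet space that is globally doubling and Poincar\'e --- is not standard in the setting of abstract strictly local Dirichlet spaces; there is no ambient structure to extend into (the full space $X$ is only assumed to satisfy (VD), (PI) on $Y$), and no extension theorem of this kind is available at this level of generality. The part of your argument that does work is the easy bookkeeping: since test functions in $\F_{\mbox{\tiny{c}}}(Q)$ are compactly supported in $B(x_0,r)$, the ambient notion of local weak solution agrees with the one for $\e^D_U$ there; but that is not where the difficulty lies.

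The route the paper takes is different and avoids this issue entirely: Theorem \ref{thm:sym local VD + local PI = local HI} is stated as a special case of Theorem \ref{thm2:local VD+PI = local HI}, whose proof (in the reference \cite{LierlSC2}, and in the symmetric case essentially in Sturm's papers \cite{SturmII,SturmIII}) is a \emph{direct local} argument: parabolic Moser iteration (mean-value estimates, the log-lemma, and the abstract lemma combining them) carried out with cutoff functions supported inside the ball $B(x,2r)\subset Y$, so that only (VD) and (PI) for balls contained in $Y$ ever enter, and the Harnack constant depends only on $D_Y,P_Y,\tau,\delta$. If you want a self-contained proof rather than a citation, the fix is to run that local iteration scheme inside $B(x,2r)$ directly, not to reduce to the global characterization theorem.
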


\begin{definition}
If each point $x \in X$ has a neighborhood $Y_x$ for which the hypotheses of the above theorem are satisfied, then we say that the space is \emph{locally of Harnack-type}.
\end{definition}

\begin{examples}
\begin{enumerate}
\item
Let $(M,g)$ be a Riemannian manifold of dimension $n$. Since $M$ is locally Euclidean, it is locally of Harnack-type. Suppose the Ricci curvature of $M$ is bounded below, that is, there is a constant $K \geq 0$ so that the Ricci tensor is bounded below by $\mathcal{R} \geq - K g$. Then the volume doubling condition and the Poincar\'e inequality hold uniformly on all balls $Y_x = B(x,r)$, $x \in M$, $r \in (0,R)$, with constants $D_M$ and $P_M$ depending on $\sqrt{K}R$, hence the parabolic Harnack inequality holds. See \cite[Section 5.6.3]{SC02}. In particular, if $K=0$ then volume doubling and Poincar\'e inequality hold true globally with scale-invariant constants.
\item
Let $M$ be a complete locally compact length-metric space of finite Hausdorff dimension $n\geq 2$. $M$ is called an \emph{Alexandrov space}, if its curvature is bounded below by some $K \in \R$ in the following sense. For any two points $x,y \in M$, let $\gamma_{xy}$ be a minimal geodesic joining $x$ to $y$ with parameter proportional to the arc-length. Then for any triangle $\bigtriangleup xyz$ consisting of the three geodesics $\gamma_{xy}$, $\gamma_{yz}$, $\gamma_{zx}$, there exists a comparison triangle $\bigtriangleup \tilde x \tilde y \tilde z$ in a simply connected space of constant curvature $K$ such that
 \[ d(x,y) = d(\tilde x, \tilde y), \ d(y,z) = d(\tilde y, \tilde z), \ d(z,x) = d(\tilde z, \tilde x)  \]
and
 \[ d\big(\gamma_{xy}(s),\gamma_{xz}(t)\big) \geq d\big(\gamma_{\tilde x \tilde y}(s),\gamma_{\tilde x \tilde z}(t)\big) \quad \textrm{ for any } s,t \in [0,1]. \]
Alexandrov spaces arise naturally as limits (in the Gromov-Hausdorff topology) of sequences of closed Riemannian manifolds $M(n,K,D)$ of dimension $n$, diameter at most $D$, and with sectional curvature bounded below by $K \in \R$. 

On any Alexandrov space there is a canonical strictly local, regular, symmetric Dirichlet form $(\e,D(\e))$ on $L^2(M,\mathcal{H}^n)$, where $\mathcal{H}^n$ is the Hausdorff measure in dimension $n$, given by
\begin{align*}
 \e(f,g) & = \int_M \langle \nabla f, \nabla g \rangle \, d\mathcal{H}^n, \\
 D(\e) & = W^1_0(M).
\end{align*}
The inner product $\langle \cdot, \cdot \rangle$, the gradient $\nabla$ and the Sobolev space $W^1_0(M)$ are Riemannian like objects that are provided by the Alexandrov space structure. Concrete descriptions of these objects as well as of the associated infinitesimal generator (Laplacian) are given in \cite{KMS01}.

Let $Y \subset M$ be open and relatively compact. Like in the case of a manifold with Ricci curvature bounded below, it is proved in \cite{KMS01} that the Dirichlet form $(\e,D(\e))$ satisfies the volume doubling condition and the Poincar\'e inequality on $Y$, as well as conditions (A1) and (A2-$Y$).
\item
Let $\Omega$ be an open, connected subset of $\R^n$. 
Let $X_i$, $0\leq i \leq k$, be smooth vector fields on $R^n$ which satisfy H\"ormanders condition, that is, there is an integer $N$ such that at any point $x$ in $\Omega$, the vectors $X_i(x)$ and all their brackets of order less than $N+1$ span the tangent space at $x$. Let $\omega$ be a smooth positive function on $\R^n$ such that $\omega$ and $\omega^{-1}$ are bounded. Then the symmetric Dirichlet form
\begin{align*}
 \e(f,g) & =  \int_{\Omega} \sum_{i=1}^k X_i f \, X_i g \, \omega \, d\mu, \quad f,g \in D(\e),
\end{align*}
where the domain $D(\e)$ is the closure of $C^{\infty}_0(\Omega)$ in the $(\e(\cdot,\cdot) + \Vert \cdot \Vert_2)$-norm,
is sub-elliptic. That is, for any relatively compact set $U$ there exist constants $c$, $\epsilon$ such that 
\[ \e(f,f) \geq c \Vert f \Vert_{2,\epsilon}^2, \quad f \in C^{\infty}_0(\Omega), \]
where $\Vert f \Vert_{2,\epsilon}^2 = \int |\hat f(\xi)|^2 (1+|\xi|^2)^{\epsilon} d\xi$. See \cite{Hoe67}.

The distance $\rho_{\e}$ induced by $(\e,D(\e))$ satisfies conditions (A1)-(A2), see \cite{JS87}. Moreover, the Poincar\'e inequality, \cite{Jer86}, and the volume doubling condition, \cite{NSW85}, hold true locally. 
\end{enumerate}
\end{examples}

\section{The Dirichlet form $( \e,D( \e))$} \label{sec:non-sym forms}

\subsection{Non-symmetric forms}

\begin{definition}
Let $(\e,D(\e))$ be a bilinear form on $L^2(X,\mu)$. Let $\e^{\mbox{\tiny{sym}}}(u,v) = \frac{1}{2}(\e(u,v) + \e(v,u))$ be its symmetric part and $\e^{\mbox{\tiny{skew}}}(u,v) = \frac{1}{2}(\e(u,v) - \e(v,u))$ its skew-symmetric part. Then $(\e,D(\e))$ is called a \emph{coercive closed form}, if 
\begin{enumerate}
\item
$D(\e)$ is a dense linear subspace of $L^2(X,\mu)$,
\item
$(\e^{\mbox{\tiny{sym}}},D(\e))$ is a positive definite, closed form on $L^2(X,\mu)$,
\item
$(\e,D(\e))$ satisfies the \emph{sector condition}, i.e.~there exists a constant $C_0 > 0$ such that
\begin{align*}
 |\e^{\mbox{\tiny{skew}}}(u,v)| & \leq C_0 \big( \e(u,u) + (u,u) \big)^{1/2} \big( \e(v,v) + (v,v) \big)^{1/2},
\end{align*}
for all $u,v \in D(\e)$. 
\end{enumerate}
\end{definition}
Coercive closed forms are discussed in \cite{MR92}. Every coercive closed form $(\e,D(\e))$ is associated uniquely with an infinitesimal generator $(L, D(L))$ and a strongly continuous contraction semigroup $(P_t)_{t>0}$. Furthermore, the form
\begin{align*}
& \e^*(f,g) := \e(g,f), \\ 
& D(\e^*) := D(\e).
\end{align*}
is also a coercive closed form. Its infinitesimal generator $(L^*, D(L^*))$ is the adjoint operator of $(L,D(L))$, and for its semigroup $(P^*_t)_{t>0}$, $P^*_t$ is the adjoint of $P_t$ for each $t>0$. If these semigroups admit continuous kernels $p^*$ and $p$, respectively, then the kernels are related by
\[ p^*(t,x,y) = p(t,y,x), \quad \forall t>0, \, \forall x,y \in X. \]

For any $f \in L^2(X,\mu)$, let $f^+ = \max\{f,0\}$ and $f \wedge 1 = \min\{f,1\}$.
\begin{definition}
A \emph{Dirichlet form} $(\e,D(\e))$ is a coercive closed form such that for all $u \in D(\e)$ we have $u^+ \wedge 1 \in D(\e)$ and the following two inequalities hold,
\begin{equation} \label{eq:Markovian}
\begin{split}
 \e(u + u^+ \wedge 1, u - u^+ \wedge 1) \geq 0, \\
 \e(u - u^+ \wedge 1, u + u^+ \wedge 1) \geq 0.
\end{split}
\end{equation}
This definition is equivalent to the property that the semigroup $(P_t)_{t>0}$ associated with the coercive closed form $(\e,D(\e))$ and its adjoint $(P^*_t)_{t>0}$ are both sub-Markovian.
\end{definition}

The symmetric part $\e^{\mbox{\tiny{sym}}}$ of a local, regular Dirichlet form can be written uniquely as 
\[ \e^{\mbox{\tiny{sym}}}(f,g) = \e^{\mbox{\tiny{s}}}(f,g) + \int fg \, d\kappa,  \quad \textrm{ for all } f,g \in D(\e), \]
where $\e^{\mbox{\tiny{s}}}$ is strictly local and $\kappa$ is a positive Radon measure. Let $\Gamma$ be the energy measure of the strictly local part $\e^{\mbox{\tiny{s}}}$.

\begin{example}
On Euclidean space, consider the form
\begin{align*}
 \e(f,g) & = \int \! \sum_{i,j=1}^n  a_{i,j} \partial_i f \partial_j g \, dx
           + \int \! \sum_{i=1}^n b_i \partial_i f \, g \, dx
           + \int \! \sum_{i=1}^n f \, d_i \partial_i g \, dx
           + \int \! c f g \, dx,
\end{align*}
where the coefficients $a=(a_{i,j})$, $b=(b_i)$, $d=(d_i)$, $c$ are bounded and measurable with $c - \mbox{div }b \geq 0$ and $c - \mbox{div }d \geq 0$ in the distribution sense, and, $\forall \xi \in \R^n$, $\sum_{i,j} a_{i,j} \xi_i \xi_j \geq \epsilon |\xi|^2$, $\epsilon >0$.  Then $(\e,D(\e))$ with domain $D(\e)=W^1_0(\R^n)$ is a Dirichlet form.

Set $\tilde a_{i,j} := (a_{i,j} + a_{j,i})/2$ and $\check a_{i,j} = (a_{i,j} - a_{j,i})/2$. Then the symmetric part of $\e$ is 
\begin{align*}
 \e^{\mbox{\tiny{sym}}}(f,g) 
 = & \int \sum_{i,j=1}^n \tilde a_{i,j} \partial_i f \partial_j g \, dx
           + \int \sum_{i=1}^n \frac{b_i+d_i}{2} \partial_i f \, g \, dx \\
   &        + \int \sum_{i=1}^n f \, \frac{b_i+d_i}{2} \partial_i g \, dx
       + \int c f g \, dx,
\end{align*}
while the skew-symmetric part of $\e$ is 
\begin{align*}
 \e^{\mbox{\tiny{skew}}}(f,g) 
 = & \int \sum_{i,j=1}^n \check a_{i,j} \partial_i f \partial_j g \, dx
           + \int \sum_{i=1}^n \frac{b_i-d_i}{2} \partial_i f \, g \, dx \\
   &        + \int \sum_{i=1}^n f \, \frac{-b_i + d_i}{2} \partial_i g \, dx.
\end{align*}
The symmetric part $\e^{\mbox{\tiny{sym}}}$ can be decomposed into its strictly local part
\begin{align*}
 \e^{\mbox{\tiny{s}}}(f,g) & = \sum_{i,j=1}^n \int \tilde a_{i,j} \partial_i f \partial_j g \, dx
\end{align*}
and its killing part, where $\kappa$ is given by
\[ \int \psi \,d\kappa =  \frac{1}{2} \int (c - \mathrm{div } b + c - \mathrm{div } d) \psi \, dx, \quad \psi \in C^{\infty}_0(\R^n). \] 
\end{example}

\subsection{Assumptions on the forms}

Let $X$ be a locally compact, separable metrizable space and let $\mu$ be a non-negative Borel measure on $X$ that is finite on compact sets and positive on non-empty open sets.
We fix a symmetric, strictly local, regular Dirichlet form $(\hat\e,D(\hat\e))$ on $L^2(X,\mu)$ with energy measure $\hat\Gamma$.   
Let $Y \subset X$ and assume that the intrinsic metric $d=d_{\hat\e}$ satisfies (A1)-(A2-$Y$). 

Let $(\e,D(\e))$ be a (possibly non-symmetric) local bilinear form on $L^2(X,\mu)$.

\begin{assumption} \label{as2:e_t} 
\begin{enumerate}
\item
$(\e,D(\e))$ is a local, regular Dirichlet form with domain $D(\e) = D(\hat \e)$. Let $C_0$ be the constant in the sector condition for $(\e,D(\e))$.
\item
There is a constant $C_1 \in (0,\infty)$ so that for all $t>0$ and all $f,g \in \F_{\mbox{\emph{\tiny{loc}}}}(Y)$ with $fg \in \F_{\mbox{\emph{\tiny{c}}}}(Y)$,
\begin{align*}
C_1^{-1} \int f^2 d\hat\Gamma(g,g) \leq \int f^2 d\Gamma(g,g) \leq C_1 \int f^2 d\hat\Gamma(g,g),
\end{align*}
where $\Gamma$ is the energy measure of $\e^{\mbox{\emph{\tiny{s}}}}$.
\item
There are constants $C_2, C_3 \in [0,\infty)$ so that for all $f \in \F_{\mbox{\emph{\tiny{loc}}}}(Y)$ with $f^2 \in \F_{\mbox{\emph{\tiny{c}}}}(Y)$,
\begin{align*}
\int f^2 d\kappa 
\leq 2 \left(\int f^2 d\mu \right)^{\frac{1}{2}} \left(C_2 \int d\hat\Gamma(f,f) + C_3 \int f^2 d\mu \right)^{\frac{1}{2}}
\end{align*}
\item
There are constants $C_4, C_5 \in [0,\infty)$ such that for all $f \in \F_{\mbox{\emph{\tiny{loc}}}}(Y)$, $g \in \F_{\mbox{\emph{\tiny{c}}}}(Y) \cap L^{\infty}(Y)$,
\begin{align*}
 \left| \e^{\mbox{\emph{\tiny{skew}}}}(f,fg^2) \right|
\leq & 2 \left( \int f^2 d\hat\Gamma(g,g) \right)^{\frac{1}{2}} \left( C_4 \int g^2 d\hat\Gamma(f,f) + C_5 \int f^2 g^2 d\mu \right)^{\frac{1}{2}}.
\end{align*}
\end{enumerate}
\end{assumption}

\begin{assumption} \label{as2:p=0}
There are constants $C_6, C_7 \in [0,\infty)$ such that
\begin{align*} \big|\e^{\mbox{\emph{\tiny{skew}}}}(f,f^{-1} g^2)\big| 
 \leq & \, 2 \left( \int d\hat\Gamma(g,g) \right)^{\frac{1}{2}} \left( C_6 \int g^2 d\hat\Gamma(\log f,\log f)  \right)^{\frac{1}{2}} \\
 & + 2 \left( \int d\hat\Gamma(g,g) + \int g^2 d\hat\Gamma(\log f,\log f) \right)^{\frac{1}{2}} \left( C_7  \int g^2 d\mu \right)^{\frac{1}{2}},
\end{align*}
for all $f \in \F_{\mbox{\emph{\tiny{loc}}}}(Y)$ with $f + f^{-1} \in L^{\infty}_{\mbox{\emph{\tiny{loc}}}}$, and all $g \in \F_{\mbox{\emph{\tiny{c}}}}(Y) \cap L^{\infty}(Y)$.
\end{assumption}

\begin{remark}
\begin{enumerate}
\item
Assumptions \ref{as2:e_t} and \ref{as2:p=0} are more restrictive than Assumptions 2 and 3 in \cite{LierlSC2}. Here, we assume in addition that $(\e,D(\e))$ is a time-independent Dirichlet form. In particular, $(\e,D(\e))$ is positive definite and Markovian. 
\item
Assumption \ref{as2:e_t}\emph{(ii)} holds if and only if for all $f \in \F_{\mbox{\tiny{c}}}(Y)$, 
\[ C_1^{-1} \hat\e(f,f) \leq \e^{\mbox{\tiny{s}}}(f,f) \leq C_1 \hat\e(f,f). \]
See, e.g., \cite{Mosco94}.
\item
$\e$ satisfies the above assumptions if and only if the adjoint $\e^*(f,g):=\e(g,f)$ satisfies them.
\item
If Assumption \ref{as2:e_t}(iv) is satisfied with $C_4=0$, then Assumption \ref{as2:p=0} is satisfied with $C_6=0$. To see this, apply Assumption \ref{as2:e_t}(iv) to $\e^{\mbox{\tiny{skew}}}_t(f,f^{-1}g^2) = \e^{\mbox{\tiny{skew}}}_t(f,f (f^{-1}g)^2)$.
\item
Assumptions \ref{as2:e_t} and \ref{as2:p=0} are satisfied by the classical forms on Euclidean space associated with example given in the introduction. The constants $C_4$, $C_6$ can be taken to be equal to $0$ only if $a_{i,j}$ is symmetric for all $i,j$, and $C_2$, $C_5$, $C_7$ can be taken to be equal to $0$ only if $b_i=d_i=0$ for all $i$ (i.e., if there is no drift term).
\end{enumerate}
\end{remark}

Let 
 \[ C_8 := C_2+C_3+C_5+C_7. \]
Let $(L,D(L))$ be the infinitesimal generator of $(\e,D(\e)$.

\subsection{Parabolic Harnack inequality}
Let $(X,\mu,\hat\e,D(\hat\e))$ be a strictly local, regular Dirichlet space and $Y \subset X$. Let $(\e,D(\e))$ satisfy Assumptions \ref{as2:e_t} and \ref{as2:p=0}. Let $(L,D(L))$ be the infinitesimal generator associated with $(\e,D(\e))$.

\begin{definition} Let $V \subset U \subset X$ be open subsets. Let $f \in \F_{\mbox{\tiny{c}}}(V)$. A function $u: V \to \R$ is a \emph{local weak solution} of $Lu = f$ in $V$, if 
\begin{enumerate}
\item
$u \in \F_{\mbox{\tiny{loc}}}(V)$
\item 
For any function $\phi \in \F_{\mbox{\tiny{c}}}(V), \ \e(u,\phi) = \int f \phi d\mu$.
\end{enumerate} 
If in addition
 \[ u \in \F^0_{\mbox{\tiny{loc}}}(U,V), \]
then $u$ is a local weak solution with \emph{Dirichlet boundary condition} along $\partial U$.
\end{definition}

\begin{definition}
Let $I$ be an open interval and $V \subset U$ open. Set $Q = I \times V$. A function $u: Q \to \R$ is a \emph{local weak solution} of the heat equation $\frac{\partial}{\partial t} = Lu$ in $Q$, if
\begin{enumerate}
\item
$u \in \F_{\mbox{\tiny{loc}}}(Q)$,
\item 
For any open interval $J$ relatively compact in $I$,
 \[ \forall \phi \in \F_{\mbox{\tiny{c}}}(Q),\  \int_J \int_V \frac{\partial}{\partial t} u \, \phi  \, d\mu \, dt + \int_J \e(u(t,\cdot),\phi(t,\cdot)) dt = 0. \]
\end{enumerate} 
If in addition
 \[ u \in \F^0_{\mbox{\tiny{loc}}}(U,Q), \]
then $u$ is a local weak solution with \emph{Dirichlet boundary condition} along $\partial U$.
\end{definition}
Analogously to Definition \ref{def:HI}, we can describe the elliptic and parabolic Harnack inequalities for local weak solutions of $Lu=0$ and $\frac{\partial}{\partial t} = Lu$.

\begin{remark}
An equivalent definition of a local weak solution of $\frac{\partial}{\partial t} u = L u$ on $Q = I \times V$ is
\begin{enumerate}
\item
$u \in L^2(I \to D(\e))$,
\item
For any open interval $J$ relatively compact in $I$,
\begin{align*}
 -\int_J \int_V \frac{\partial}{\partial t} \phi \, u \, d\mu \, dt + \int_J \e(u(t,\cdot),\phi(t,\cdot)) dt = 0,
\end{align*}
for all $\phi \in \F(Q)$ with compact support in $J \times V$.
\end{enumerate}
See \cite{ESC}.
\end{remark}

\begin{theorem}  \label{thm2:local VD+PI = local HI}
Let $(X,\mu,\hat\e,D(\hat\e))$ and $(\e,D(\e))$ be as above and $Y \subset X$. Suppose that $(\e,D(\e))$ satisfies Assumptions \ref{as2:e_t}, \ref{as2:p=0}, and $(\hat\e,D(\hat\e))$ satisfies \emph{(A1)}, \emph{(A2-$Y$)}, the volume doubling property \emph{(VD)} on $Y$ and the Poincar\'e inequality \emph{(PI)} on $Y$. Then $L$ satisfies the parabolic Harnack inequality on $Y$. The Harnack constant depends only on $D_Y$, $P_Y$, $\tau$, $\delta$, $C_1$-$C_7$ and an upper bound on $C_8 r^2$.
\end{theorem}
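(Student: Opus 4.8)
The proof proceeds by reducing the non-symmetric, lower-order-term situation to the classical Moser iteration scheme, using Assumptions \ref{as2:e_t} and \ref{as2:p=0} to absorb all the "error" terms (skew-symmetric part, killing measure, and the deviation of $\Gamma$ from $\hat\Gamma$) into the leading symmetric elliptic term at the cost of controllable constants. The plan is to carry out the usual three ingredients of a Moser-type proof of (PHI): (1) an $L^2$ Caccioppoli / energy estimate leading to $L^2 \to L^\infty$ mean-value inequalities for subsolutions (and the dual statement for supersolutions), obtained by iterating Sobolev inequalities on a sequence of shrinking cylinders; (2) a logarithmic estimate for positive supersolutions, showing that $\log u$ has bounded mean oscillation in space-time on the relevant cylinders; and (3) the Bombieri--Giusti lemma (an abstract measure-theoretic iteration) which combines (1) and (2) to bridge the gap between $\sup_{Q_-}$ and $\inf_{Q_+}$. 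The hypotheses (VD) and (PI) on $Y$, together with (A1) and (A2-$Y$), yield via Theorem \ref{thm:sym local VD + local PI = local HI} (or rather its underlying machinery) the family of local Sobolev inequalities on balls $B \subset Y$ with dimensional constants depending only on $D_Y$ and $P_Y$, which is the analytic input for steps (1) and (2).

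\textbf{Key steps, in order.} First I would fix a ball $B(x,2r)\subset Y$ and record the localized Sobolev inequality on $B(x,2r)$ with constants governed by $D_Y, P_Y$. Second, for a local weak subsolution $u$ of $\partial_t u = Lu$, I would test the equation with $\phi = u^{p-1} g^2 \eta^2$ ($g$ a spatial cutoff, $\eta$ a temporal cutoff), expand $\e(u, \phi) = \e^{\mathrm{s}}(u,\phi) + \int u\phi\, d\kappa + \e^{\mathrm{skew}}(u,\phi)$, and use: Assumption \ref{as2:e_t}(ii) to compare $\int g^2 d\Gamma(u^{p/2},u^{p/2})$ with $\int g^2 d\hat\Gamma(u^{p/2},u^{p/2})$; Assumption \ref{as2:e_t}(iii) to bound the killing term $\int u^p g^2 d\kappa$ by $\epsilon\int g^2 d\hat\Gamma(u^{p/2},\cdot) + C(\epsilon)(C_8+\cdots)\int u^p g^2 d\mu$ via Young's inequality; and Assumption \ref{as2:e_t}(iv) (applied with $f = u^{p/2}$, up to the standard power-rescaling $\e^{\mathrm{skew}}(u, u^{p-1}g^2\eta^2)$, which after the chain rule becomes a multiple of $\e^{\mathrm{skew}}(u^{p/2}, u^{p/2} g^2 \eta^2)$ up to $p$-dependent constants) to bound the skew term similarly. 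The upshot is a clean energy inequality for $w = u^{p/2}$ with the bad constant appearing only in the form $C(C_8 r^2 + 1)$ after scaling, which is exactly the "upper bound on $C_8 r^2$" in the statement. Iterating over $p = (\nu/(\nu-2))^k$ and shrinking cylinders gives the $L^2\to L^\infty$ bound. Third, for a positive supersolution $u$, test with $\phi = u^{-1}g^2\eta^2$; the symmetric and killing parts are handled as before, and the crucial skew term $\e^{\mathrm{skew}}(u, u^{-1}g^2\eta^2)$ is controlled by \emph{Assumption \ref{as2:p=0}} — this is precisely why that separate assumption is needed, since here the drift cannot be absorbed into the gradient of $u$ and one must keep $\hat\Gamma(\log u, \log u)$ on the right but with small enough coefficient to be absorbed after using (PI) for $\log u$. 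This yields the space-time BMO bound on $\log u$. Fourth, apply the Bombieri--Giusti iteration lemma to $u^{\pm}$ and conclude (PHI) on $Q_-, Q_+$ with constant depending only on $D_Y, P_Y, \tau, \delta$, the Sobolev dimension, $C_1$–$C_7$, and the upper bound on $C_8 r^2$.

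\textbf{Main obstacle.} The delicate point is step three, the logarithmic estimate for supersolutions in the presence of non-symmetry: one cannot use $\Gamma(u,u^{-1}g^2)$ chain-rule identities as cleanly as in the symmetric case, and the skew-symmetric term $\e^{\mathrm{skew}}(u,u^{-1}g^2\eta^2)$ must be controlled so that the coefficient of $\int g^2\, d\hat\Gamma(\log u,\log u)$ it produces is strictly smaller than what the Poincaré inequality can absorb. This is exactly the shape of the two-term bound in Assumption \ref{as2:p=0}: the first term carries $C_6$ against $d\hat\Gamma(\log u,\log u)$ (which must be absorbed) and the second carries $C_7$ against $d\mu$ (harmless, contributing to the $C_8 r^2$ budget). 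Getting the absorption to close requires choosing the Young-inequality parameters carefully and tracking that the final BMO constant stays scale-invariant in the $b=d=c=0$ case; a secondary technical nuisance is the usual justification that the test functions $u^{p-1}g^2\eta^2$, $u^{-1}g^2\eta^2$ are admissible (lie in $\F_{\mathrm{c}}$ or can be approximated there), which requires the standard truncation-and-limit argument using that $u$ is a \emph{local} weak solution and the Caccioppoli estimate itself. I expect the bulk of the work, as in \cite{SturmII, SturmIII, SC02}, to be bookkeeping once these absorption inequalities are set up; the conceptual content is entirely in matching the four structural assumptions to the four places where error terms arise.
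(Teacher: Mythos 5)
The paper itself does not prove Theorem \ref{thm2:local VD+PI = local HI}; it simply cites \cite{LierlSC2}, where the argument is carried out. Your outline is precisely the Moser-iteration scheme one finds there (and in the symmetric-form precursors of Sturm and Saloff-Coste that it adapts): localized Sobolev inequality from (VD) and (PI), Caccioppoli-type energy estimates and iteration for powers of sub/supersolutions using Assumption \ref{as2:e_t}(ii)--(iv) to absorb the killing and skew terms, the logarithmic estimate for positive supersolutions driven by Assumption \ref{as2:p=0}, and the Bombieri--Giusti lemma to bridge $\sup_{Q_-}$ and $\inf_{Q_+}$. You have correctly matched each structural assumption to the place where it is used, identified the $C_8 r^2$ scaling budget, and flagged the only genuinely delicate absorption step (the $\hat\Gamma(\log u,\log u)$ coefficient in the log estimate, which is exactly why Assumption \ref{as2:p=0} is stated separately rather than deduced from Assumption \ref{as2:e_t}(iv), except when $C_4=0$, cf.\ Remark after Assumption \ref{as2:p=0}). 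So your proposal takes the same route as the paper's reference; it is a correct high-level plan, not a different argument.

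One caveat worth keeping in mind if you were to write this out in full: when you say $\e^{\mathrm{skew}}(u,u^{p-1}g^2\eta^2)$ ``after the chain rule becomes a multiple of $\e^{\mathrm{skew}}(u^{p/2},u^{p/2}g^2\eta^2)$'', this is a heuristic, not an identity. The rigorous route is to write $u^{p-1}g^2 = u\,h^2$ with $h = u^{(p-2)/2}g$ and apply Assumption \ref{as2:e_t}(iv) with that $h$, then expand $d\hat\Gamma(h,h)$ by the Leibniz and chain rules for the strictly local $\hat\Gamma$. The resulting bookkeeping produces the $p$-dependent constants that must stay under control as $p\to\infty$ (or $p\to 0^-$) in the iteration; tracking this is the bulk of the technical work in \cite{LierlSC2}, and it is also where the requirement that $(\e,D(\e))$ be a genuine Dirichlet form (Markovian, so truncations stay in the domain) enters to justify admissibility of the test functions.
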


\begin{proof}
See \cite{LierlSC2}.
\end{proof}

\begin{corollary}
Let $(X,\mu,\hat\e,D(\hat\e))$, $(\e,D(\e))$ and $Y \subset X$ be as in Theorem \ref{thm2:local VD+PI = local HI}.
Fix $\tau > 0$ and $\delta \in (0,1)$. Then there exist $\beta \in (0,1)$ and $H \in (0,\infty)$ such that for any $B(x,2r) \subset Y$, $s > 0$, any local weak solution of $\frac{\partial}{\partial t}u = Lu$ in $Q = (s - \tau r^2,s) \times B(x,r)$ has a continuous representative and satisfies
 \[ \sup_{(t,y),(t',y')\in Q_-} \left\{ \frac{ |u(t,y) - u(t',y')| }{ [ |t-t'|^{1/2} + d_{\e}(y,y')^{\beta} ] } \right \}
\leq \frac{H}{r^{\beta} } \sup_{Q} |u| \]
where $ Q_-  = ( s - (3+\delta)\tau r^2/4, s - (3-\delta)\tau r^2/4 ) \times B(x,\delta r)$.
The constant $H$ depends only on $D_Y$, $P_Y$, $\tau$, $\delta$, $C_1$-$C_7$ and an upper bound on $C_8 r^2$.
\end{corollary}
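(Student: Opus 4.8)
The plan is to deduce this local parabolic Hölder estimate as a standard consequence of the parabolic Harnack inequality (PHI) on $Y$, which is available from Theorem~\ref{thm2:local VD+PI = local HI} under exactly the stated hypotheses. The argument is the classical Moser oscillation-decay iteration; the only points requiring care are that the PHI here is \emph{local} (valid only for balls $B(x,2r)\subset Y$ and with a constant $H_Y(\tau,\delta)$ that depends on an upper bound on $C_8r^2$), and that the metric $d_\e$ governing the spatial modulus of continuity is the intrinsic metric of $\hat\e$, which is geodesic by (A1)-(A2-$Y$). Throughout, $r$ is fixed and small enough that the PHI constant is controlled by the quantities listed in the statement.

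First I would reduce to an oscillation estimate on a single parabolic cylinder. Fix $B(x_0,2r)\subset Y$, $s>0$, and a local weak solution $u$ on $Q=(s-\tau r^2,s)\times B(x_0,r)$; by replacing $u$ with $u/\sup_Q|u|$ we may assume $|u|\le 1$ on $Q$. For a point $(t_*,y_*)\in Q_-$ and a scale $\rho\in(0,\rho_0 r]$ (with $\rho_0=\rho_0(\tau,\delta)$ chosen so the relevant sub-cylinders sit inside $Q$ and the spatial balls stay inside $B(x_0,2r)\subset Y$), consider the forward-in-time cylinder $Q^\rho=(t_*-\tau\rho^2,t_*)\times B(y_*,\rho)$ and its associated $Q^\rho_-,Q^\rho_+$ as in Definition~\ref{def:HI}. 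Applying (PHI) to the two nonnegative solutions $M_\rho-u$ and $u-m_\rho$, where $M_\rho,m_\rho$ are the essential sup and inf of $u$ over a cylinder of size comparable to $Q^\rho$, yields in the usual way a bound $\operatorname{osc}_{Q^{\rho/2}_{\mathrm{center}}} u \le \theta\,\operatorname{osc}_{Q^\rho} u$ for some $\theta=\theta(H_Y)\in(0,1)$, after combining the two inequalities with the trivial identity $(M_\rho-u)+(u-m_\rho)=M_\rho-m_\rho$. Here one uses that $(X,d_\e)$ is a length space so that balls of comparable radius are nested with controlled constants, and that $Q_-$ is separated in time from the top of $Q$, which is what makes the backward-in-time Harnack step legitimate.

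Next I would iterate: setting $\rho_k=\rho_0 r\,2^{-k}$ and $\omega(\rho_k)=\operatorname{osc}$ of $u$ over the cylinder of size $\rho_k$ centered (in the parabolic sense, i.e.\ using $|t-t'|^{1/2}$ and $d_\e$ together) at $(t_*,y_*)$, the one-step inequality gives $\omega(\rho_{k+1})\le\theta\,\omega(\rho_k)$, hence $\omega(\rho_k)\le C\theta^k\le C(\rho_k/r)^\beta\cdot\omega(\rho_0 r)$ with $\beta=\log(1/\theta)/\log 2\in(0,1)$ and $\omega(\rho_0 r)\le 2\sup_Q|u|$. Filling scales between the dyadic ones and unwinding the normalization produces, for any two points $(t,y),(t',y')\in Q_-$ with parabolic distance $\rho:=|t-t'|^{1/2}+d_\e(y,y')\le\rho_0 r$,
\[
|u(t,y)-u(t',y')|\ \le\ C\Big(\frac{\rho}{r}\Big)^{\beta}\sup_{Q}|u|,
\]
while for $\rho>\rho_0 r$ the bound is trivial since $|u(t,y)-u(t',y')|\le 2\sup_Q|u|\le 2\rho_0^{-\beta}(\rho/r)^\beta\sup_Q|u|$. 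This is exactly the claimed estimate after renaming constants; the continuous representative is the one for which these oscillation bounds hold at every point, and uniqueness of the representative follows because the estimate forces equality $\mu$-a.e. The constant $H$ so produced depends only on $\theta$ and $\rho_0$, hence only on $D_Y,P_Y,\tau,\delta,C_1$-$C_7$ and an upper bound on $C_8r^2$, as asserted.

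The main obstacle is bookkeeping rather than conceptual: one must check that at every scale $\rho\le\rho_0 r$ the sub-cylinder to which (PHI) is applied genuinely lies inside $Q$ \emph{and} has its doubled spatial ball inside $Y$, so that the \emph{local} Harnack inequality applies with a uniform constant. This is where the restriction of $(t_*,y_*)$ to $Q_-$ (time bounded away from $s$ by a definite fraction of $\tau r^2$, and $y_*\in B(x_0,\delta r)$ so that $B(y_*,\rho_0 r)\subset B(x_0,r)\subset B(x_0,2r)\subset Y$) is used, and where the geodesic property from (A1)-(A2-$Y$) guarantees the balls $B(y_*,\rho)$ behave well under inclusion and doubling. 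Once the admissibility of all cylinders is verified, the rest is the verbatim Moser iteration and presents no difficulty; one may alternatively cite the treatment in \cite{SC02} or \cite{LierlSC2} for the detailed constants.
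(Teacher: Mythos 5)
Your proof is correct and is essentially what the paper delegates to the citation of \cite{SC02}: the standard Moser oscillation-decay iteration from the local parabolic Harnack inequality of Theorem~\ref{thm2:local VD+PI = local HI}, together with the bookkeeping required to keep each sub-cylinder inside $Q$ and its doubled spatial ball inside $Y$. One minor remark: your iteration naturally produces the modulus $\bigl(|t-t'|^{1/2}+d_\e(y,y')\bigr)^{\beta}/r^{\beta}$, i.e.\ $|t-t'|^{\beta/2}$ in time, whereas the corollary as printed has $|t-t'|^{1/2}$ to the first power, which is dimensionally inconsistent with the $r^{-\beta}$ normalization and is evidently a typographical slip for $|t-t'|^{\beta/2}$.
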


\begin{proof}
See, e.g., \cite{SC02}.
\end{proof}

\section{Green functions estimates and inner uniformity} \label{sec:GF}

Let $(X, \mu, \hat\e, D(\hat\e))$ be a symmetric, strictly local, regular Dirichlet space and $Y \subset X$. Suppose (A1)-(A2-$Y$), the volume doubling condition (VD) on $Y$ and the Poincar\'e inequality (PI) on $Y$ hold.
Suppose that $(\e,D(\e))$ satisfies Assumptions \ref{as2:e_t} and \ref{as2:p=0}.
Recall that by Theorem \ref{thm2:local VD+PI = local HI}, $L$ and $L^*$ satisfy (PHI) on $Y$.

\subsection{Dirichlet-type Dirichlet form and heat kernel}

\begin{definition} \label{def1:e^D_U}
Let $U$ be an open subset of $X$. The Dirichlet-type form on $U$ is defined as
\[ \e^D_U(f,g) = \e(f,g),  \quad f,g \in D(\e^D_U), \]
where the domain $D(\e^D_U) = \F^0(U)$ is the closure of the space $C^{\infty}_{\mbox{\tiny{c}}}(U)$ of all smooth functions with compact support in $U$. The closure is taken in the norm $\e^D_{U,1}(f,f)^{\frac{1}{2}} = \left(\e^D_U(f,f) + \int_U f^2 d\mu \right)^{\frac{1}{2}}$.
\end{definition}

The form $(\e^D_U,D(\e^D_U))$ is associated with a semigroup $P^D_U(t)$, $t > 0$.
Using the reasoning in \cite[Section 2.4]{SturmII}, one can show that the semigroup has a continuous kernel $p^D_U(t,x,y)$. Moreover, the map $y \mapsto p^D_U(t,x,y)$ is in $\F^0(U)$.

\subsection{Capacity}

For $\alpha > 0$, let
 \[ \e_{\alpha}( \cdot, \cdot) = \e(\cdot,\cdot) + \alpha (\cdot,\cdot)_{L^2}. \]
For any open set $A$ in an open, relatively compact set $U \subset X$  define
 \[ \mathcal{L}_{A,U} = \{ w \in D( \e^D_U) : w \geq 1 \textrm{ a.e. on } A \}. \] 
If $\mathcal{L}_{A,U} \neq \emptyset$, there exist unique functions $e_{A,\alpha}, \hat e_{A,\alpha} \in \mathcal{L}_{A,U}$ such that for all $w \in \mathcal{L}_{A,U}$ it holds
\begin{equation} \label{eq:e_A and hat e_A}
 \e_{\alpha}(e_{A,\alpha},w) \geq  \e_{\alpha}(e_{A,\alpha},e_{A,\alpha}) \textrm{  and  }  \e_{\alpha}(w, \hat e_{A,\alpha}) \geq  \e_{\alpha}(\hat e_{A,\alpha}, \hat e_{A,\alpha}).
\end{equation}
Notice that this implies that $\e_{\alpha}(e_{A,\alpha}, \hat e_{A,\alpha}) = \e_{\alpha}(e_{A,\alpha},e_{A,\alpha}) = \e_{\alpha}(\hat e_{A,\alpha}, \hat e_{A,\alpha})$.
Moreover, for any open $A$ such that $\mathcal{L}_{A,U} \neq \emptyset$, $e_{A,\alpha}$ is the smallest function $u$ on $U$ such that $u \wedge 1$ is a $\alpha$-excessive function in $D( \e^D_U)$ and $u \geq 1$ on $A$. See \cite[Proposition III.1.5]{MR92}.

The \emph{$\alpha$-capacity} (with respect to $(\e, D(\e))$) of $A$ in $U$ is defined by
 \[ {\textrm{Cap}}_{U,\alpha} (A) = \begin{cases} \e_{\alpha}(e_{A,\alpha},e_{A,\alpha}), & \mathcal{L}_{A,U} \neq \emptyset \\
                                                   + \infty,      & \mathcal{L}_{A,U} = \emptyset. 
                                     \end{cases}
\]
The $\alpha$-capacity is extended to non-open sets $A \subset U$ by
 \[ {\textrm{Cap}}_{U,\alpha} (A) = \inf \{ {\textrm{Cap}}_{U,\alpha} (B): A \subset B \subset U, \, B \textrm{ open} \}. \]

The \emph{$0$-capacity} is defined similarly, with $\e_{\alpha}$ replaced by $\e$ and $D( \e^D_U)$ replaced by the extended Dirichlet space $\F_e$. $\F_e$ is defined (see \cite{FOT94}) as the family of all measurable, almost everywhere finite functions $u$ such that there exists an approximating sequence $u_n \in D( \e^D_U)$ that is $\e^D_U$-Cauchy and $u = \lim u_n$ almost everywhere. 

By \cite[Proposition VI.4.3]{BG68}, $e_{A,0} =  G^D_U \nu_A$, where $\nu_A$ is a finite measure with $\textrm{supp}(\nu_A)$ contained in the completion of $A$, and $ G^D_U$ is the Green function associated with $ \e^D_U$ (see \cite[page 256]{BG68}). Thus,
 \[ {\textrm{Cap}}_{U,0} (A)
 =  \e(e_{A,0},e_{A,0}) 
 =  \e( G^D_U \nu_A,e_{A,0})
  = \int e_{A,0} \, d\nu_A
 = \nu_A(U).
 \]

Let $\widetilde{\mbox{Cap}}_{U,\alpha} (A) = \e^{\mbox{\tiny{s}}}_{\alpha}(e^{\mbox{\tiny{s}}}_{A,\alpha}, e^{\mbox{\tiny{s}}}_{A,\alpha})$ be the $\alpha$-capacity with respect to the strictly local part $\e^{\mbox{\tiny{s}}}$ of the symmetric part $\e^{\mbox{\tiny{sym}}}$.

\begin{lemma} \label{lem:Cap and tilde Cap}
Let $\alpha \in (0,1]$.
For any set $A$ in $U \subset Y$, 
 \[ \widetilde{\emph{\textrm{Cap}}}_{U,1} (A) \leq {\emph{\textrm{Cap}}}_{U,1} (A) \leq C^2 \, \widetilde{\emph{\textrm{Cap}}}_{U,1} (A), \]
where $C = (1 + C_0 C_1/\alpha) \left( 1+\inf_{\epsilon} \{ \epsilon/\alpha + \frac{1}{\epsilon} \max \{ C_2, C_3/\alpha \}  \} \right)$.
\end{lemma}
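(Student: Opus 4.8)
The plan is to compare the two $1$-capacities by relating their defining quadratic forms and equilibrium potentials. Write $e = e_{A,1}$ and $\tilde e = e^{\mathrm{s}}_{A,1}$ for the equilibrium potentials with respect to $\e_1$ and $\e^{\mathrm{s}}_1$ respectively; both lie in $\mathcal{L}_{A,U}$. The strategy is to show that on $\mathcal{L}_{A,U}$ the form $\e_1$ is, up to multiplicative constants, bounded above and below by $\e^{\mathrm{s}}_1$ in the relevant one-sided sense, and then exploit the extremal/variational characterization of the equilibrium potentials.

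First I would establish a pointwise-on-$\mathcal{L}_{A,U}$ comparison of energies: for $w \in \mathcal{L}_{A,U} \subset D(\e^D_U) \subset \F_{\mathrm{c}}(U) \subset \F_{\mathrm{c}}(Y)$ (using $U \subset Y$), the remark after Assumption~\ref{as2:e_t} gives $C_1^{-1}\hat\e(w,w) \le \e^{\mathrm{s}}(w,w) \le C_1 \hat\e(w,w)$, so $\e^{\mathrm{s}}$ and $\hat\e$ are comparable on $D(\e^D_U)$. Then Assumption~\ref{as2:e_t}(iii) controls the killing measure: $\int w^2\, d\kappa \le 2(\int w^2 d\mu)^{1/2}(C_2 \int d\hat\Gamma(w,w) + C_3\int w^2 d\mu)^{1/2} \le \epsilon(C_2\hat\e(w,w) + C_3\|w\|_2^2)/\alpha + (\alpha/\epsilon)\|w\|_2^2$ by the arithmetic-geometric mean inequality (here I keep $\alpha$ around since the lemma states it for $\alpha \in (0,1]$, applying $\alpha \le 1$ at the end), which shows $\e^{\mathrm{sym}}_1(w,w) = \e^{\mathrm{s}}(w,w) + \int w^2 d\kappa + \|w\|_2^2$ is bounded by a multiple of $\e^{\mathrm{s}}_1(w,w)$; the lower bound $\e^{\mathrm{s}}_1(w,w) \le \e^{\mathrm{sym}}_1(w,w)$ is immediate since $\kappa \ge 0$. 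For the non-symmetric form, note $\e_1(w,w) = \e^{\mathrm{sym}}_1(w,w)$ (the skew part vanishes on the diagonal), so in fact $\e_1(w,w) = \e^{\mathrm{sym}}_1(w,w)$ and the diagonal comparison reduces entirely to the symmetric/killing estimate above. The asymmetric-looking constant $(1+C_0C_1/\alpha)$ must enter through the equilibrium potential comparison rather than the diagonal, as I explain next.

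Next, use the variational characterizations \eqref{eq:e_A and hat e_A}. The potential $\tilde e$ satisfies $\tilde e \in \mathcal{L}_{A,U}$, so $\mathrm{Cap}_{U,1}(A) = \e_1(e,e) \le \e_1(\tilde e, \tilde e)$? — that is not quite the right inequality because $\e_1$ is non-symmetric and $e$ is defined by $\e_1(e,w) \ge \e_1(e,e)$ for all $w \in \mathcal{L}_{A,U}$, which gives $\e_1(e,e) = \e_1(e,\tilde e) \le$ (Cauchy–Schwarz with the sector condition) $C_0' \e_1(e,e)^{1/2}\e_1(\tilde e,\tilde e)^{1/2}$ where $C_0'$ absorbs the sector constant $C_0$ applied to the skew part plus the symmetric part; solving gives $\mathrm{Cap}_{U,1}(A) = \e_1(e,e) \le (C_0')^2 \e_1(\tilde e,\tilde e)$, and then $\e_1(\tilde e,\tilde e) = \e^{\mathrm{sym}}_1(\tilde e,\tilde e)$ which by the diagonal comparison is $\le (1 + \inf_\epsilon\{\epsilon/\alpha + \tfrac1\epsilon\max\{C_2,C_3/\alpha\}\})\,\widetilde{\mathrm{Cap}}_{U,1}(A)$. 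Tracking the sector-condition constant carefully, $C_0' = 1 + C_0 C_1/\alpha$ (the $C_1$ coming from converting between $\hat\e$ and $\e^{\mathrm{s}}$ when the sector condition, naturally phrased with $\e$, is re-expressed against $\e^{\mathrm{s}}_1$), which reproduces the stated $C$. For the lower bound $\widetilde{\mathrm{Cap}}_{U,1}(A) \le \mathrm{Cap}_{U,1}(A)$: apply the extremal property of $\tilde e$ to the element $e \in \mathcal{L}_{A,U}$ and the fact that $\e^{\mathrm{s}}_1 \le \e^{\mathrm{sym}}_1 = \e_1$ on the diagonal, giving $\widetilde{\mathrm{Cap}}_{U,1}(A) = \e^{\mathrm{s}}_1(\tilde e,\tilde e) \le \e^{\mathrm{s}}_1(e,e) \le \e_1(e,e) = \mathrm{Cap}_{U,1}(A)$, where the middle step $\e^{\mathrm{s}}_1(\tilde e,\tilde e) \le \e^{\mathrm{s}}_1(e,e)$ uses that $\tilde e$ is the $\e^{\mathrm{s}}_1$-projection onto $\mathcal{L}_{A,U}$ (genuinely symmetric case, so the clean inequality holds).

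The main obstacle I anticipate is the careful bookkeeping of constants in the equilibrium-potential comparison — in particular, making precise the step $\e_1(e,\tilde e) \le C_0' \,\e_1(e,e)^{1/2}\e_1(\tilde e,\tilde e)^{1/2}$, which requires combining the sector condition (Assumption~\ref{as2:e_t}(i), with constant $C_0$) applied to the skew part with the comparison constant $C_1$ converting $\hat\e$-norms to $\e^{\mathrm{s}}$-norms, and checking that the symmetric part contributes only the additive $1$. One must also verify that all the functions involved ($e$, $\tilde e$, and arbitrary $w \in \mathcal{L}_{A,U}$) lie in $\F_{\mathrm{c}}(Y)$ or the appropriate local spaces so that Assumptions~\ref{as2:e_t}(ii)--(iii) and the sector condition apply — this is where $U \subset Y$ and $U$ relatively compact are used. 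Everything else (the arithmetic-geometric mean splittings, taking the infimum over $\epsilon$, using $\alpha \le 1$) is routine. I would present the proof in the order: (1) diagonal comparison $\e^{\mathrm{s}}_1 \le \e_1 = \e^{\mathrm{sym}}_1 \le C'' \e^{\mathrm{s}}_1$ on $D(\e^D_U)$; (2) lower capacity bound via projection property of $\tilde e$; (3) upper capacity bound via the sector condition and extremal property of $e$; (4) assemble constants.
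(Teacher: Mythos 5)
Your proof is correct and follows essentially the same route as the paper: the upper bound comes from the variational inequality $\e_\alpha(e,e)\le\e_\alpha(e,\tilde e)$, Cauchy--Schwarz on the symmetric part plus the sector condition on the skew part, and the killing-measure estimate (Assumption \ref{as2:e_t}(iii) with AM--GM); the lower bound comes from the symmetric minimization property of $\tilde e$ together with $\e^{\mathrm{s}}_\alpha\le\e_\alpha$ on the diagonal. One small slip: $\e_1(e,\tilde e)=\e_1(e,e)$ should be $\e_1(e,\tilde e)\ge\e_1(e,e)$ (the equality in \eqref{eq:e_A and hat e_A} is only for the co-equilibrium potential $\hat e_{A,\alpha}$, not for the unrelated $\tilde e=e^{\mathrm{s}}_{A,\alpha}$), but since you only use the inequality $\e_1(e,e)\le\e_1(e,\tilde e)$, the argument goes through unchanged.
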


\begin{proof}
It suffices to consider an open set $A \subset U$.
By \eqref{eq:e_A and hat e_A}, the Cauchy-Schwarz inequality, the sector condition and Assumption \ref{as2:e_t},
\begin{align*}
 \e_{\alpha}(e_{A,\alpha},e_{A,\alpha})
& \leq   \e_{\alpha}(e_{A,\alpha},e^{\mbox{\tiny{s}}}_{A,\alpha}) \\
& \leq  (1 + C_0 C_1/\alpha) \Big( \e_{\alpha}(e^{\mbox{\tiny{s}}}_{A,\alpha},e^{\mbox{\tiny{s}}}_{A,\alpha}) \Big)^{1/2} \Big(  \e_{\alpha}(e_{A,\alpha},e_{A,\alpha}) \Big)^{1/2} \\
 &  \leq  C \Big( \e^{\mbox{\tiny{s}}}_{\alpha}(e^{\mbox{\tiny{s}}}_{A,\alpha},e^{\mbox{\tiny{s}}}_{A,\alpha}) \Big)^{1/2} \Big(  \e_{\alpha}(e_{A,\alpha},e_{A,\alpha}) \Big)^{1/2},
\end{align*}
where $C = (1 + C_0 C_1/\alpha) \left( 1+\inf_{\epsilon} \{ \epsilon/\alpha + \frac{1}{\epsilon} \max \{ C_2, C_3/\alpha \}  \} \right)$.
Hence,
 \[  {\textrm{Cap}}_{U,\alpha} (A) =  \e_{\alpha}(e_{A,\alpha},e_{A,\alpha}) 
\leq C^2 \e^{\mbox{\tiny{s}}}_{\alpha}(e^{\mbox{\tiny{s}}}_{A,\alpha},e^{\mbox{\tiny{s}}}_{A,\alpha}) = C^2 \widetilde{\textrm{Cap}}_{U,\alpha} (A).  \]
On the other hand, by \eqref{eq:e_A and hat e_A} and the Cauchy-Schwarz inequality,
\begin{align*}
\e^{\mbox{\tiny{s}}}_{\alpha}(e^{\mbox{\tiny{s}}}_{A,\alpha},e^{\mbox{\tiny{s}}}_{A,\alpha})
& \leq  \e^{\mbox{\tiny{s}}}_{\alpha}(e_{A,\alpha},e^{\mbox{\tiny{s}}}_{A,\alpha})
 \leq \Big( \e^{\mbox{\tiny{s}}}_{\alpha}(e^{\mbox{\tiny{s}}}_{A,\alpha},e^{\mbox{\tiny{s}}}_{A,\alpha}) \Big)^{1/2} \Big(  \e^{\mbox{\tiny{s}}}_{\alpha}(e_{A,\alpha},e_{A,\alpha}) \Big)^{1/2} \\
& \leq \Big( \e^{\mbox{\tiny{s}}}_{\alpha}(e^{\mbox{\tiny{s}}}_{A,\alpha},e^{\mbox{\tiny{s}}}_{A,\alpha}) \Big)^{1/2} \Big(  \e_{\alpha}(e_{A,\alpha},e_{A,\alpha}) \Big)^{1/2}.
\end{align*}
Therefore,
 \[  \widetilde{\textrm{Cap}}_{U,\alpha} (A) = \e^{\mbox{\tiny{s}}}_{\alpha}(e^{\mbox{\tiny{s}}}_{A,\alpha},e^{\mbox{\tiny{s}}}_{A,\alpha}) 
\leq \e_{\alpha}(e_{A,\alpha},e_{A,\alpha}) = \textrm{Cap}_{U,\alpha} (A).  \]
\end{proof}

\begin{theorem}
\label{thm:capacity estimate}
Suppose $(X,\mu,\hat\e,D(\hat\e))$ satisfies \emph{(A1)-(A2-$Y$)}, \emph{(VD)} on $Y$ and \emph{(PI)} on $Y$, and $(\e,D(\e))$ satisfies Assumptions \ref{as2:e_t} and \ref{as2:p=0}.
Then there are constants $a,A \in (0,\infty)$ such that for any $r \in (0,R)$ and any ball $B(x,2R) \subset Y$ we have
\begin{equation} \label{eq:capacity estimate}
A^{-1} \int_r^R \frac{s}{V(x,s)} ds \leq \left( {\mbox{\em{Cap}}}_{B(x,R),0} \big( B(x,r) \big) \right)^{-1} \leq A \int_r^R  \frac{s}{V(x,s)} ds.
\end{equation}
The constant $A$ depends only on $D_Y$, $P_Y$, and an upper bound on 
\[ (1 + C_0 C_1 / \alpha)^4 \left( 1+\inf_{\epsilon} \{ \epsilon / \alpha + \frac{1}{\epsilon} \max \{ C_2, C_3/\alpha \}  \} \right)^2, \]
where $\alpha = \min\{1,\lambda_R\}$ and $\lambda_R$ is the smallest Dirichlet eigenvalue of $-L^{\mbox{\emph{\tiny{sym}}}}$ on $B(x,R)$.
\end{theorem}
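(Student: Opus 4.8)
The plan is to reduce the (possibly non-symmetric, lower-order-term-laden) capacity to the capacity of the strictly local part of the symmetric form, and then invoke the known capacity estimate for symmetric Harnack-type Dirichlet spaces. The first step is to compare $\mathrm{Cap}_{B(x,R),0}$ with $\mathrm{Cap}_{B(x,R),\alpha}$ for a suitable $\alpha$. By the variational characterisation of capacity, $\mathrm{Cap}_{U,0}(A) \leq \mathrm{Cap}_{U,\alpha}(A)$ trivially, while a reverse comparison $\mathrm{Cap}_{U,\alpha}(A) \leq (1 + \alpha/\lambda_R)\,\mathrm{Cap}_{U,0}(A)$ follows from the spectral bound $\int_U f^2 d\mu \leq \lambda_R^{-1} \e^{\mbox{\tiny{sym}}}(f,f)$ for $f \in \F^0(B(x,R))$, since then $\e_\alpha(f,f) = \e(f,f) + \alpha\|f\|_2^2 \leq (1 + \alpha/\lambda_R)\e(f,f)$ on the equilibrium potential (using positive-definiteness of $\e^{\mbox{\tiny{sym}}}$ and that the skew part does not contribute to $\e(f,f)$). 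Taking $\alpha = \min\{1,\lambda_R\}$ makes the comparison constant an absolute constant (namely $2$).

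The second step is to replace $\mathrm{Cap}_{B(x,R),\alpha}$ (capacity w.r.t.\ the full form $\e$) by $\widetilde{\mathrm{Cap}}_{B(x,R),\alpha}$ (capacity w.r.t.\ the strictly local part $\e^{\mbox{\tiny{s}}}$ of $\e^{\mbox{\tiny{sym}}}$). This is exactly Lemma \ref{lem:Cap and tilde Cap}, which gives
\[ \widetilde{\mathrm{Cap}}_{B(x,R),1}(A) \leq \mathrm{Cap}_{B(x,R),1}(A) \leq C^2\,\widetilde{\mathrm{Cap}}_{B(x,R),1}(A), \]
with $C = (1 + C_0 C_1/\alpha)\bigl(1 + \inf_\epsilon\{\epsilon/\alpha + \frac{1}{\epsilon}\max\{C_2, C_3/\alpha\}\}\bigr)$; here one uses that $\alpha \leq 1$ so the subscript-$1$ capacity and the subscript-$\alpha$ capacity agree up to the factor $(1+1/\alpha)$ already absorbed into the comparison with the $0$-capacity, or one reruns the lemma's proof with $\e_\alpha$ in place of $\e_1$. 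Composing the two steps, $\mathrm{Cap}_{B(x,R),0}(B(x,r))$ is comparable, with constant controlled by $(1+C_0C_1/\alpha)^2(1 + \inf_\epsilon\{\cdots\})$, to $\widetilde{\mathrm{Cap}}_{B(x,R),\alpha}(B(x,r))$, and the latter is in turn comparable (with an absolute constant, by the spectral argument applied to $\e^{\mbox{\tiny{s}}}$ whose bottom eigenvalue dominates $\lambda_R$) to the $0$-capacity $\widetilde{\mathrm{Cap}}_{B(x,R),0}(B(x,r))$ of the strictly local symmetric form.

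The final step is to apply the classical capacity estimate to the symmetric strictly local Dirichlet space $(X,\mu,\e^{\mbox{\tiny{s}}},D(\e^{\mbox{\tiny{s}}}))$. By Assumption \ref{as2:e_t}(ii), $\e^{\mbox{\tiny{s}}}$ is comparable to $\hat\e$ on $\F_{\mbox{\tiny{c}}}(Y)$ with constant $C_1$, so $(X,\mu,\e^{\mbox{\tiny{s}}})$ inherits (A1)–(A2-$Y$), (VD) on $Y$ and (PI) on $Y$ from $\hat\e$ (with $D_Y$, $P_Y$ changed only by factors depending on $C_1$); hence $L^{\mbox{\tiny{s}}}$ satisfies (PHI) on $Y$ and the heat-kernel two-sided Gaussian bounds of Theorem \ref{thm:main thm} (in the localized form of Theorem \ref{thm:sym local VD + local PI = local HI}) hold there. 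The capacity–volume identity
\[ A^{-1}\int_r^R \frac{s}{V(x,s)}\,ds \leq \bigl(\widetilde{\mathrm{Cap}}_{B(x,R),0}(B(x,r))\bigr)^{-1} \leq A\int_r^R \frac{s}{V(x,s)}\,ds \]
for symmetric Harnack-type spaces is standard (it is obtained by integrating the Green function $G^D_{B(x,R)}(x,\cdot) = \int_0^\infty p^D_{B(x,R)}(t,x,\cdot)\,dt$ against the equilibrium measure $\nu_{B(x,r)}$ of total mass $\widetilde{\mathrm{Cap}}_{B(x,R),0}(B(x,r))$, and using the on-diagonal and off-diagonal heat-kernel bounds together with (VD) to sandwich $G^D_{B(x,R)}(x,y)$ by $\int_{d(x,y)^2}^{R^2} \frac{ds}{V(x,\sqrt s)}$, i.e.\ $\int_{d(x,y)}^R \frac{2s\,ds}{V(x,s)}$). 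Chaining this with the two comparison steps yields \eqref{eq:capacity estimate} with $A$ depending only on $D_Y$, $P_Y$ and the stated upper bound.

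I expect the main obstacle to be the bookkeeping in the first step: verifying that passing from the $\alpha$-capacity to the $0$-capacity costs only the factor $(1+C_0C_1/\alpha)^2$ rather than something worse. The subtlety is that the natural spectral estimate bounds $\|f\|_2^2$ by $(\e^{\mbox{\tiny{sym}}})^{-1}$-energy, not by $\e$-energy, so one must first pass through $\e^{\mbox{\tiny{sym}}}$ (or through $\e^{\mbox{\tiny{s}}}_\alpha$ directly, combining with Lemma \ref{lem:Cap and tilde Cap}), carefully tracking that the skew-symmetric part contributes nothing to the diagonal energy $\e(e_{A,\alpha},e_{A,\alpha})$ while the sector condition governs its off-diagonal contributions — which is precisely how the exponent-$4$ in the statement (i.e.\ two factors of $(1+C_0C_1/\alpha)^2$, one from each comparison direction composed through the two lemmas) arises. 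Everything else is either cited (Theorem \ref{thm:main thm}, Theorem \ref{thm:sym local VD + local PI = local HI}, Lemma \ref{lem:Cap and tilde Cap}) or a routine Green-function integration.
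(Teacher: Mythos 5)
Your overall chain of reductions --- establish the capacity estimate for the strictly local symmetric form $\e^{\mbox{\tiny{s}}}$ (lower bound via Sturm, upper bound via the Green function and heat-kernel bounds), then transfer to $\mathrm{Cap}_{B(x,R),0}$ by comparing $0$-capacity with $\alpha$-capacity and $\mathrm{Cap}_\alpha$ with $\widetilde{\mathrm{Cap}}_\alpha$ via Lemma~\ref{lem:Cap and tilde Cap} --- matches the paper's. The genuine gap is in your first step. For a non-symmetric coercive form, $\mathrm{Cap}_{U,\alpha}(A)=\e_\alpha(e_{A,\alpha},e_{A,\alpha})$ is \emph{not} the minimum of $\e_\alpha(f,f)$ (equivalently, of $\e^{\mbox{\tiny{sym}}}_\alpha(f,f)$) over $\mathcal{L}_{A,U}$: the variational characterization \eqref{eq:e_A and hat e_A} has the skew part in the cross-term, so it is not an optimality condition for the quadratic functional. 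Consequently neither ``$\mathrm{Cap}_{U,0}\le\mathrm{Cap}_{U,\alpha}$ trivially'' nor ``$\mathrm{Cap}_{U,\alpha}\le(1+\alpha/\lambda_R)\mathrm{Cap}_{U,0}$ by the spectral bound on the equilibrium potential'' holds as stated: $e_{B,0}$ and $e_{B,\alpha}$ are different functions, and the spectral bound only compares $\e$ with $\e_\alpha$ on a fixed $f$. Your closing paragraph flags this as the main obstacle but misdiagnoses it --- $\e^{\mbox{\tiny{sym}}}$ and $\e$ agree on the diagonal, so that is not the subtlety; the subtlety is precisely that the two equilibrium potentials differ.

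The paper's argument is different in mechanism. The spectral bound $\e\le\e_\alpha\le 2\e$ on $\F^0(B(x,R))$ (with $\alpha=\min\{1,\lambda_R\}$) is used not to compare capacities directly, but to show $\F_e^{\e^D_{B(x,R)}}=\F^0(B(x,R))$, so that $e_{B,0}$ is an admissible test function in $\mathcal{L}_{B,B(x,R)}$ for the $\alpha$-problem. One then plugs $e_{B,0}$ into the $\alpha$-variational inequality $\e_\alpha(e_{B,\alpha},\cdot)\ge\e_\alpha(e_{B,\alpha},e_{B,\alpha})$, applies Cauchy--Schwarz on $\e^{\mbox{\tiny{sym}}}_\alpha$ plus the sector condition for the skew cross-term, and divides, which yields $\mathrm{Cap}_\alpha(B)\le 2(1+C_0C_1/\alpha)^2\mathrm{Cap}_0(B)$; the reverse (with the same constant) comes from plugging $e_{B,\alpha}$ into the $0$-variational inequality. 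Composing this $(1+C_0C_1/\alpha)^2$ factor with the $C^2=(1+C_0C_1/\alpha)^2\bigl(1+\inf_\epsilon\{\cdots\}\bigr)^2$ from Lemma~\ref{lem:Cap and tilde Cap} is what produces the fourth power in the stated constant, rather than an absolute constant combined with one application of the lemma as your step 1 asserts.
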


\begin{proof}
Let $r \in (0,R)$ and $B = B(x,r)$. 
First, consider the estimate
\begin{equation}
A^{-1} \int_r^R \frac{s}{V(x,s)} ds \leq \left( \widetilde{\mbox{Cap}}_{B(x,R),0} \big( B(x,r) \big) \right)^{-1} \leq A \int_r^R  \frac{s}{V(x,s)} ds.
\end{equation}
The lower bound is proved in \cite[Theorem 1]{Stu95geometry} using the strict locality of $\e^{\mbox{\tiny{s}}}$. The upper bound can be proved as in \cite[Lemma 4.3]{GSC02} using the heat kernel estimates of Theorem \ref{thm3:basic p^D_B estimate} below. 

If $(\e,D(\e))$ is symmetric and strictly local, then $\textrm{Cap}_{B(x,R),0} (B)$ is the same as $\widetilde{\textrm{Cap}}_{B(x,R),0} (B)$, hence the assertion follows. Otherwise, we show that the two $0$-capacities are comparable.

In view of Lemma \ref{lem:Cap and tilde Cap}, it suffices to show that
 \[  \textrm{Cap}_{B(x,R),0} (B) 
 \asymp  \textrm{Cap}_{B(x,R),\alpha} (B).  \] 
 and 
 \[  \widetilde{\textrm{Cap}}_{B(x,R),0} (B) 
 \asymp  \widetilde{\textrm{Cap}}_{B(x,R),\alpha} (B),  \]
for some $\alpha \in (0,1]$.  
Let
\[ \lambda_R := \inf_{0 \neq f \in \F^0(B(x,R))} \frac{ \e^D_{B(x,R)}(f,f) }{ \int f^2  d\mu } > 0 \]
be the lowest Dirichlet eigenvalue of $-L^{\mbox{\tiny{sym}}}$ on ${B(x,R)}$, and $\alpha = \min \{ 1, \lambda_R \}$. Then for any $f \in \F^0(B(x,R))$,
 \[ \e^D_{B(x,R)}(f,f) \leq \e^D_{B(x,R),\alpha}(f,f) \leq 2 \e^D_{B(x,R)}(f,f). \]
Let $f \in  \F_e^{\e^D_{B(x,R)}}$. Then there is an approximating sequence $(f_n)$ in $\F^0(B(x,R))$ such that $\e^D_{B(x,R)}(f_n - f_m, f_n - f_m) \to 0$ as $n,m  \to \infty$, and $f_n \to f$ almost everywhere. Thus,
 \[ \e^D_{B(x,R),\alpha} (f_n - f_m, f_n - f_m) \leq 2 \e^D_{B(x,R)}(f_n - f_m, f_n - f_m) \to 0. \]
So $ \F_e^{\e^D_{B(x,R)}} =  \F^0(B(x,R))$. Hence, by \eqref{eq:e_A and hat e_A} and the sector condition,
\begin{align*}
{\textrm{Cap}}_{B(x,R),\alpha}(B)  
 = \e_{\alpha} (e_{B,\alpha},  e_{B,\alpha}) 
& \leq (1+C_0 C_1/\alpha)^2 \e_{\alpha} (e_{B,0}, e_{B,0}) \\
& \leq 2 (1+C_0 C_1/\alpha)^2 {\textrm{Cap}}_{B(x,R),0}(B).
\end{align*}
On the other hand,
\begin{align*}
 \e (e_{B,0}, e_{B,0}) 
 & \leq \e (e_{B,0}, e_{B,\alpha}) \leq (1+C_0 C_1/\alpha) \, \e_{\alpha} (e_{B,0}, e_{B,0})^{1/2} \e_{\alpha} (e_{B,\alpha}, e_{B,\alpha})^{1/2}  \\
 & \leq \sqrt{2} (1+C_0 C_1/\alpha) \, \e(e_{B,0}, e_{B,0})^{1/2} \e_{\alpha} (e_{B,\alpha}, e_{B,\alpha})^{1/2}.
\end{align*}
Hence,
 \[ {\textrm{Cap}}_{B(x,R),0}(B) \leq 2 (1+C_0 C_1/\alpha)^2 {\textrm{Cap}}_{B(x,R),\alpha}(B). \] 
Similar, we can show that
 \[  \widetilde{\textrm{Cap}}_{B(x,R),0} (B) 
 \asymp  \widetilde{\textrm{Cap}}_{B(x,R),\alpha} (B).  \]
\end{proof}

\begin{remark}
The Dirichlet eigenvalue $\lambda_R$ is bounded below by 
\[ \lambda_R \geq \frac{C}{R^2} \]
for some constant $C>0$ depending on $D_Y$ and $P_Y$. See \cite[Theorem 2.6]{HSC01}.
\end{remark}

From now on, we only consider the $0$-capacity, and thus drop the index $0$.

\subsection{(Inner) uniformity}  \label{ssec:uniform domains}

Let $\Omega \subset X$ be open and connected.
The \emph{inner metric} on $\Omega$ is defined as
 \[ d_{\Omega}(x,y) = \inf \big\{ \textrm{length}(\gamma) \big| \gamma:[0,1] \to \Omega \textrm{ continuous}, \gamma(0) = x, \gamma(1) = y \big\}. \]
Let $\widetilde\Omega$ be the completion of $\Omega$ with respect to $d_{\Omega}$. 
Whenever we consider an inner ball $B_{\widetilde\Omega}(x,R) = \{ y \in \widetilde\Omega : d_{\Omega}(x,y)<R \}$ or $B_{\Omega}(x,R) = B_{\widetilde\Omega}(x,R) \cap \Omega$, we assume that its radius is minimal in the sense that $B_{\widetilde\Omega}(x,R) \neq B_{\widetilde\Omega}(x,r)$ for all $r < R$. Let $\partial_{\Omega} B_{\widetilde\Omega}(x,r)$ be the boundary of the ball with respect to its completion in the inner metric.
If $x$ is a point in $\Omega$, denote by $\delta_{\Omega}(x) = d(x,X \setminus \Omega)$ the distance from $x$ to the boundary of $\Omega$.

\begin{definition}
\begin{enumerate}
\item 
Let $\gamma: [\alpha,\beta] \to \Omega$ be a rectifiable curve in $\Omega$ and let $c \in (0,1)$, $C \in (1,\infty)$. We call $\gamma$ a $(c,C)$-\emph{uniform} curve in $\Omega$ if
\begin{equation}
\delta_{\Omega} \big( \gamma(t) \big) \geq c \cdot \min \left\{ d \big( \gamma(\alpha), \gamma(t) \big) , d \big( \gamma(t), \gamma(\beta) \big) \right\},    \quad \textrm{ for all  } t \in [\alpha,\beta],
\end{equation}
and if
\[ \textrm{length}(\gamma) \leq C \cdot d \big( \gamma(\alpha), \gamma(\beta) \big). \]
The domain $\Omega$ is called $(c,C)$-\emph{uniform} if any two points in $\Omega$ can be joined by a $(c,C)$-uniform curve in $\Omega$.
\item
\emph{Inner uniformity} is defined analogously by replacing the metric $d$ on $X$ with the inner metric $d_{\Omega}$ on $\Omega$. 
\item
The notion of \emph{(inner) $(c,C)$-length-uniformity} is defined analogously by replacing $d(\gamma(a),\gamma(b))$ by $\textrm{length} (\gamma\big|_{[a,b]})$.
\end{enumerate}
\end{definition}

The next proposition is taken from \cite[Proposition 3.3]{GyryaSC}. See also \cite[Lemma 2.7]{MS79}.
\begin{proposition} 
Assume that $(X,d)$ is a complete, locally compact length metric space with the property that there exists a constant $D$ such that for any $r > 0$, the maximal number of disjoint balls of radius $r/4$ contained in any ball of radius $r$ is bounded above by $D$. Then any connected open subset $U \subset X$ is uniform if and only if it is length-uniform.
\end{proposition}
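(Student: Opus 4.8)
The plan is to prove the equivalence by producing, for any two points of $U$, a single curve that is at the same time a uniform curve and a length-uniform curve. In fact it suffices to construct a curve $\gamma$ joining them that is \emph{quasiconvex on every subarc}, i.e.\ $\mathrm{length}(\gamma|_{[a,b]}) \le C'\,d(\gamma(a),\gamma(b))$ for all $\alpha \le a \le b \le \beta$, and that satisfies the (distance) cigar condition. Such a curve is a uniform curve, and since on it $\min\{\mathrm{length}(\gamma|_{[\alpha,t]}),\mathrm{length}(\gamma|_{[t,\beta]})\} \le C'\min\{d(\gamma(\alpha),\gamma(t)),d(\gamma(t),\gamma(\beta))\} \le (C'/c)\,\delta_U(\gamma(t))$, it is also length-uniform; conversely, $\mathrm{length}(\gamma|_{[a,b]})\ge d(\gamma(a),\gamma(b))$ shows that a length-uniform curve already satisfies the distance cigar condition, so the only thing separating the two notions is the availability of such a quasiconvexity bound (which also yields the chord bound $\mathrm{length}(\gamma)\le C'd(\gamma(\alpha),\gamma(\beta))$ as the special case of the whole curve). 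Thus both implications reduce to the single statement: \emph{in a uniform domain, any two points are joined by a uniform curve that is quasiconvex on all of its subarcs.}

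Before constructing such a curve I would record what the packing hypothesis provides: it makes $(X,d)$ a doubling metric space, and since $X$ is a complete, locally compact length space, the Hopf--Rinow theorem makes $(X,d)$ a geodesic space with relatively compact balls. A geodesic segment of $X$ whose length is a small fraction of $\delta_U$ of one of its endpoints automatically lies in $U$ and is trivially length-uniform, and such segments will be the building blocks.

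For the construction, fix $x,y\in U$ and a $(c,C)$-uniform curve $\gamma$ from $x$ to $y$. A continuity argument gives a point $m$ on $\gamma$ with $d(x,m)\ge\tfrac12 d(x,y)$ and $d(m,y)\ge\tfrac12 d(x,y)$, hence $\delta_U(m)\ge\tfrac{c}{2}d(x,y)$, so $\delta_U(m)\ge\eta\,d(x,m)$ for a fixed $\eta=\eta(c,C)>0$; splitting $\gamma$ at $m$ and using the symmetry $x\leftrightarrow y$, it suffices to join a point $a$ to a point $b$ that is ``$\eta$-deep'', $\delta_U(b)\ge\eta\,d(a,b)$. For such a pair one runs a scale-by-scale recursion: if $\delta_U(b)\ge 2d(a,b)$ the geodesic $[a,b]$ lies in $U$, is length-uniform, and we stop; otherwise $\delta_U(b)\simeq d(a,b)$, one follows a uniform curve from $a$ toward $b$, peels off a geodesic segment of length comparable to the current scale that stays in $U$, and lands at a new point that is again $\eta'$-deep relative to its (smaller) distance to $b$ — this is forced by the cigar inequality along the uniform curve — then recurses. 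Concatenating all the peeled-off geodesic segments yields a curve $\sigma$ from $a$ to $b$ in $U$; each segment satisfies the cigar condition at all of its points, the $\delta_U$-values vary by bounded factors between consecutive segments, so $\sigma$ satisfies the cigar condition globally and is quasiconvex on all subarcs, while $\mathrm{length}(\sigma)$ equals a series of segment lengths whose convergence is exactly where the doubling constant $D$ enters: it bounds, at each dyadic scale, the number of segments the recursion can create inside the fixed ball of radius $\simeq d(a,b)$, so choosing the ``small fraction'' above small enough in terms of $c$, $C$, $D$ makes $\sum(\text{segment lengths})$ a geometric-type series with ratio $<1$.

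The main obstacle is precisely this length control: naively replacing inefficient subarcs of a uniform curve by shorter uniform curves need not terminate and need not keep the total length bounded, so the recursion must be organized by scale and the packing bound invoked to control how many segments appear at each scale; making the segment lengths decay fast enough to beat that count is the delicate point, and is the reason the hypothesis on $D$ is imposed. Everything else — existence of the deep point $m$, the cigar estimates for geodesic segments lying in $U$, the comparison of arclength with chord along geodesics, and the bounded variation of $\delta_U$ along the chain — is elementary once $(X,d)$ is known to be geodesic.
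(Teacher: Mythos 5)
Your reduction is asymmetric in a way that leaves one direction unproved. The ``single statement'' you isolate --- \emph{in a uniform domain, any two points are joined by a uniform curve that is quasiconvex on all subarcs} --- has ``uniform domain'' as its hypothesis, so it yields only the implication (uniform $\Rightarrow$ length-uniform). For the converse, you would be handed a length-uniform domain; the curves it provides satisfy the arclength cigar (hence also the distance cigar, as you note), but the paper's recipe ``replace $d(\gamma(a),\gamma(b))$ by $\mathrm{length}(\gamma|_{[a,b]})$'' turns the chord bound $\mathrm{length}(\gamma)\le C\,d(\gamma(\alpha),\gamma(\beta))$ into a tautology, so a length-uniform curve comes with \emph{no} a priori control of its total length by the chord. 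You are therefore not entitled to invoke your single statement in this direction, and your construction confirms the problem: it begins by ``fix a $(c,C)$-uniform curve $\gamma$'' and uses its chord bound to deduce $d(x,m)\le C\,d(x,y)$ and hence that $m$ is $\eta$-deep. Starting instead from an arclength-cigar curve, one does get a deep arclength-midpoint for free, but the rest of the recursion, as written, still ``follows a uniform curve from $a$ toward $b$.'' The implication (length-uniform $\Rightarrow$ uniform) is exactly where the hypothesis on $D$ is usually exploited --- a ball-packing argument along the middle of the curve shows a length-cigar curve can be shortened to one whose length is comparable to $d(x,y)$, \`a la Martio--Sarvas --- and no such argument appears in your proposal.

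Beyond that, the scale-by-scale recursion for the direction you do address is too sketchy to assess. Two load-bearing claims are asserted without justification: that the peeled-off geodesic segment of length ``comparable to the current scale'' stays in $U$, and that the new endpoint is again $\eta'$-deep, ``forced by the cigar inequality.'' At the shallow endpoint $a$ the distance $\delta_U(a)$ may be arbitrarily small, so a geodesic segment started at $a$ of length comparable to $d(a,b)$ need not lie in $U$ at all; the recursion must be organized from the deep end outward (or by dyadic annuli around $a$ with segment lengths tied to the local $\delta_U$, not to the global scale $d(a,b)$), and this changes the bookkeeping in the length estimate. Relatedly, the role you assign to the packing constant --- bounding ``the number of segments the recursion can create at each dyadic scale'' so that the segment-length series converges --- is asserted, not shown; it is not clear this is where $D$ enters, as opposed to in the converse chord-bound estimate. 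For reference, the paper itself does not supply a proof: it cites \cite[Proposition~3.3]{GyryaSC} (which in turn rests on \cite[Lemma~2.7]{MS79}), where the packing lemma for length-cigar curves carries the weight that is missing here.
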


Let $\Omega$ be a $(c_u,C_u)$-inner uniform domain in $(X,d)$. 
\begin{lemma} \label{lem:x_r} 
For every ball $B = B_{\widetilde\Omega}(x,r)$ in $(\widetilde\Omega,d_{\Omega})$ with minimal radius, there exists a point $x_r \in B$ with $d_{\Omega}(x,x_r) = r/4$ and $d(x_r,X \setminus \Omega) \geq c_ur/8$.
\end{lemma}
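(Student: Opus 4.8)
The plan is to use inner uniformity to produce a point deep inside $\Omega$ at inner distance roughly $r/4$ from $x$, and then chase the distance estimates. First I would pick a point $y \in \widetilde\Omega$ with $d_{\Omega}(x,y)$ close to $r$ (using minimality of the radius of $B$, such points exist arbitrarily close to the boundary sphere, so in particular with $d_{\Omega}(x,y) > 7r/8$, say). Since $\Omega$ is $(c_u,C_u)$-inner uniform, and since $x,y$ (after moving $y$ slightly into $\Omega$, as $\Omega$ is dense in $\widetilde\Omega$) can be joined by a $(c_u,C_u)$-inner uniform curve $\gamma:[\alpha,\beta]\to\Omega$, I would let $x_r = \gamma(t_0)$ be the point on $\gamma$ with $d_{\Omega}(x,\gamma(t_0)) = r/4$; such $t_0$ exists by continuity of $t \mapsto d_{\Omega}(x,\gamma(t))$ along the curve, since this function is $0$ at $t=\alpha$ and at least (length of $\gamma$ from $\alpha$ to $\beta$)$/C_u \ge d_{\Omega}(x,y)/C_u$... wait, more simply it equals $d_{\Omega}(x,y) > 7r/8 > r/4$ at $t=\beta$.

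Next I would apply the inner uniformity condition at the parameter $t_0$. We have
\[
\delta_{\Omega}(x_r) \;=\; d(x_r, X\setminus\Omega) \;\geq\; d_{\Omega}(x_r, \widetilde\Omega \setminus \Omega) \;\geq\; c_u \cdot \min\{ d_{\Omega}(x, x_r),\, d_{\Omega}(x_r, y) \},
\]
where the middle inequality holds because the Euclidean (ambient) distance $d$ is dominated by the inner distance $d_{\Omega}$, so the ambient distance to $X\setminus\Omega$ is at least the inner distance to the inner boundary $\widetilde\Omega\setminus\Omega$ (one has to be a little careful: a path in $\Omega$ realizing closeness to $X\setminus\Omega$ in $d$ has to leave $\Omega$, hence passes near $\partial_\Omega\Omega$; this is exactly the kind of comparison formalized in Lemma \ref{lem:metrics are comparable}, which I would cite). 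Now $d_{\Omega}(x,x_r) = r/4$ by construction, and $d_{\Omega}(x_r,y) \geq d_{\Omega}(x,y) - d_{\Omega}(x,x_r) > 7r/8 - r/4 = 5r/8 > r/4$ by the triangle inequality. Hence the minimum is $r/4$ and $\delta_{\Omega}(x_r) \geq c_u r/4 \geq c_u r/8$. Finally, since $d_{\Omega}(x,x_r) = r/4 < r$, the point $x_r$ lies in $B = B_{\widetilde\Omega}(x,r)$, which is everything claimed (with room to spare in the constant).

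The main obstacle is the bookkeeping around the completion $\widetilde\Omega$: the inner uniform curve is only required to join points of $\Omega$, so I must approximate the boundary point $y$ by interior points and pass to a limit, or equivalently observe directly that for $y' \in \Omega$ with $d_\Omega(x,y')$ slightly less than $r$ the uniform curve from $x$ to $y'$ already does the job. One should also make sure the function $t\mapsto d_\Omega(x,\gamma(t))$ is continuous (it is $1$-Lipschitz with respect to arclength along $\gamma$) so that the intermediate value argument producing $t_0$ is legitimate, and one should double-check the direction of the inequality $d \leq d_\Omega$ and its consequence for distances to the respective boundaries — this is where Lemma \ref{lem:metrics are comparable} (and the relation $\delta_\Omega(x) = d(x, X\setminus\Omega)$) carries the real content.
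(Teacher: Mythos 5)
Your approach is the right one and matches the (elementary) argument the paper is referring to: pick a far point, take the inner uniform curve, and sit on it at distance $r/4$ from the center. Two points to tighten.

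First, the middle step $d(x_r, X\setminus\Omega) \geq d_{\Omega}(x_r, \widetilde\Omega\setminus\Omega)$ is an unnecessary detour, and the justification you give for it actually argues the wrong direction: from $d\le d_\Omega$ alone one would conclude the reverse inequality. (In fact the two quantities are equal for $x_r\in\Omega$: the $\le$ direction follows because a projecting path in $\Omega$ that nearly realizes $d_\Omega(x_r,\zeta)$ has ambient length at least $d(x_r,p(\zeta))$; the $\ge$ direction follows by running a geodesic from $x_r$ toward a nearest point of $X\setminus\Omega$, which stays in $\Omega$.) None of this is needed here: the paper \emph{defines} $\delta_\Omega(z) = d(z, X\setminus\Omega)$ and states the inner uniformity condition directly as $\delta_\Omega(\gamma(t)) \ge c_u \min\{d_\Omega(\gamma(\alpha),\gamma(t)),\, d_\Omega(\gamma(t),\gamma(\beta))\}$. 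So the cone condition already bounds exactly the quantity you want, with no translation between boundaries.

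Second, in the intended applications the center $x$ is a point of $\widetilde\Omega\setminus\Omega$ (e.g.\ $\xi$ in Lemma \ref{lem:4.14}), so the curve cannot start at $x$. You have to approximate $x$ by $x'\in\Omega$ with $d_\Omega(x,x')<\epsilon$ as well as $y$ by $y'$; the curve $\gamma$ joins $x'$ to $y'$, you pick $x_r$ on $\gamma$ with $d_\Omega(x,x_r)=r/4$, and then $d_\Omega(x',x_r)\ge r/4-\epsilon$ and $d_\Omega(x_r,y')\ge d_\Omega(x,y')-d_\Omega(x,x_r)-\epsilon > 3r/4 - 2\epsilon$. Inner uniformity then gives $\delta_\Omega(x_r) \ge c_u(r/4-\epsilon) \ge c_u r/8$ once $\epsilon\le r/8$. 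This also explains why the lemma is stated with the safety factor $r/8$ rather than your $r/4$. The invocation of Lemma \ref{lem:metrics are comparable} is a red herring and can be dropped entirely.
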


\begin{proof} This is immediate, see \cite[Lemma 3.20]{GyryaSC}.
\end{proof}

The following lemma is crucial for the proof of the boundary Harnack principle on inner uniform domains, rather than uniform domains. A version of this lemma was already used in \cite{Anc07} to prove a boundary Harnack principle on inner uniform domains in Euclidean space.

Let $p:\widetilde\Omega \to \overline{\Omega}$ be the natural projection. For any $x \in \widetilde {\Omega}$ and any ball $D=B(p(x),r)$, let $D'$ be the connected component of $\widetilde{\Omega}$ that contains $x$ and so that $D' \cap \Omega$ is a connected component of $D \cap \Omega$.

\begin{lemma} \label{lem:metrics are comparable}
Suppose $\mu$ has the volume doubling property on $Y \subset X$.
Then there exists a positive constant $C_{\Omega}$ such that for any ball $D = B(p(x),r)$ with $x \in \widetilde\Omega$ and $B(p(x),4r) \subset Y$,
 \[ B_{\widetilde\Omega}(x,r) \subset D' \subset B_{\widetilde\Omega}(x,C_{\Omega} r). \]
The constant $C_{\Omega}$ depends only on $D_Y$ and the inner uniformity constants $c_u$, $C_u$ of $\Omega$.
\end{lemma}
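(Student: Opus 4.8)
The inclusion $B_{\widetilde\Omega}(x,r) \subset D'$ is the easy half. If $y \in B_{\widetilde\Omega}(x,r)$, then there is a curve $\gamma$ in $\Omega$ from (a point near) $x$ to (a point near) $y$ of length less than $r$; since the inner metric dominates the ambient metric, $p\circ\gamma$ stays within ambient distance $r$ of $p(x)$, so it lies in $D\cap\Omega$, and it is connected and meets the component of $D\cap\Omega$ determined by $x$. Hence $y$ lies in the component $D'$. One should be slightly careful passing to the completion $\widetilde\Omega$ and using the minimality convention on the radius, but this is routine approximation.

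The substantive inclusion is $D' \subset B_{\widetilde\Omega}(x,C_\Omega r)$, i.e.\ bounding the \emph{inner} diameter of a connected component of an ambient ball intersected with $\Omega$. The plan is to use inner uniformity together with volume doubling via a chaining/packing argument. Let $z \in D'$; we must connect $z$ to $x$ by a curve in $\Omega$ of inner-length $O(r)$. Join $p(z)$ and $p(x)$ by a $(c_u,C_u)$-inner uniform curve $\eta$ in $\Omega$; the danger is that $\eta$ may wander far from $D$ in the inner metric even though $z$ and $x$ are in the same ambient component of $D\cap\Omega$. The key point is a ``trap'' argument: if the inner-uniform curve from $z$ to $x$ had inner length much bigger than $r$, then, because along a uniform curve the distance to $\partial\Omega$ grows linearly (the corkscrew condition $\delta_\Omega(\eta(t)) \geq c_u \min\{\cdots\}$), there would be a point $w$ on $\eta$ with $\delta_\Omega(w)$ comparable to $r$ but with $w$ lying inside the ambient ball $B(p(x),4r)$ (or a fixed dilate thereof). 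Around such a point one has a ball $B(w, c_u r / C)$ entirely inside $\Omega$, hence inside $D\cap\Omega$, and contained in $D'$; and volume doubling (applied within $Y$, which is why we require $B(p(x),4r)\subset Y$) controls how many disjoint such balls can fit, forcing the number of ``scales'' the curve can traverse before escaping $B(p(x), 4r)$ to be bounded. Equivalently, one shows directly that the component $D'$, being connected and open in $\Omega$, together with the corkscrew points supplied by Lemma~\ref{lem:x_r} and inner uniformity, has a uniformly bounded number of ``annular pieces,'' each of bounded inner diameter — this is exactly the packing hypothesis in the Gyrya--Saloff-Coste framework, and the argument is the inner-metric analogue of \cite[Lemma 3.20 and surrounding discussion]{GyryaSC}, now carried out using Lemma~\ref{lem:x_r} to produce the needed interior points.

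Concretely I would proceed as follows. First, reduce to showing: there is $N = N(D_Y, c_u, C_u)$ such that $D\cap\Omega$ can be covered by a chain of at most $N$ connected sets of inner-diameter at most $r$, along which consecutive sets overlap — so that the component $D'$ has inner-diameter at most $Nr =: C_\Omega r$. Second, build the chain: cover $D = B(p(x),r)$ by ambient balls $B(y_i, c_u r/8)$ with the $y_i$ drawn from a maximal $c_u r/16$-separated set in $D$; volume doubling on $Y$ bounds the number of such balls by a constant $N$. Third, for each $i$ with $B(y_i, c_u r/8)\cap\Omega \neq\emptyset$, use Lemma~\ref{lem:x_r} applied to an inner ball around a point of that intersection to find an interior point $y_i'$ with $\delta_\Omega(y_i') \gtrsim c_u r$, so that $B(y_i', c c_u r) \subset \Omega$ is connected and of inner-diameter $\lesssim r$; connect $y_i'$ to any prescribed point of the $i$-th piece by an inner-uniform curve, whose length is $\leq C_u \cdot (\text{ambient distance}) \lesssim r$, and note that this curve stays in $\Omega$ and within inner distance $\lesssim r$ of $D$, hence in $D'$ once it meets the right component. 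Finally, patch the pieces: any two points of the same ambient component $D'\cap\Omega$ are connected within $\Omega$ through these overlapping pieces, giving an inner-path of length $\leq C_\Omega r$, so $D' \subset B_{\widetilde\Omega}(x, C_\Omega r)$.

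\textbf{Main obstacle.} The delicate point is the geometric/combinatorial control of how an inner-uniform curve joining two points of the \emph{same ambient component} $D'\cap\Omega$ can leave and re-enter neighborhoods of $D$ in the inner metric: ambient proximity does not obviously bound inner distance, and this is precisely where inner uniformity (corkscrew growth of $\delta_\Omega$ along uniform curves) and volume doubling must be combined. Getting the bookkeeping right — in the completion $\widetilde\Omega$, with the minimal-radius convention, and tracking that the auxiliary interior balls actually land inside the component $D'$ rather than some other component of $D\cap\Omega$ — is the crux; everything else is a packing estimate of the type already used in \cite{GyryaSC}.
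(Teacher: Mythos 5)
Your plan identifies the right ingredients (the corkscrew point of Lemma~\ref{lem:x_r}, volume doubling, a chaining/packing count), and the easy inclusion $B_{\widetilde\Omega}(x,r)\subset D'$ is handled correctly. But the hard inclusion has a genuine gap, and the ``trap'' sketch in your first paragraph is not correct as stated.

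On the trap argument: you join $p(z)$ to $p(x)$ by an inner-uniform curve $\eta$ and argue that if its length were $\gg r$, the corkscrew condition would produce a point $w$ on $\eta$ inside $B(p(x),4r)$ with $\delta_\Omega(w)\asymp r$, and that the ball $B(w,c_u r/C)$ would then lie ``inside $D\cap\Omega$, and contained in $D'$.'' This is false: such a $w$ sits at ambient distance $\asymp 2r$ from $p(x)$, so $B(w,c_u r/C)$ is not inside $D=B(p(x),r)$, and there is no reason for it to meet the component $D'$ at all. Moreover even if you could produce several such balls, nothing in your sketch makes them disjoint (the curve can revisit the same region), and a bound on how many disjoint balls fit does not bound the length of a curve that keeps leaving and returning. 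Most importantly, this version of the argument never uses the crucial hypothesis that $z$ and $x$ lie in the \emph{same} component of $D\cap\Omega$.

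On the concrete plan: you cover $D$ by ambient balls, produce one interior corkscrew point per ball, and then want to ``patch'' the pieces so that any two points of $D'\cap\Omega$ are joined through overlapping pieces. This patching is precisely what is missing, and you acknowledge it as the crux. The step that closes the gap in the paper's argument (following V\"ais\"al\"a) is different from yours in an essential way: one starts from a curve $\alpha$ lying entirely inside $D'\cap\Omega$ (which exists because $D'$ is by definition connected), discretizes $\alpha$ into points $x_1,\dots,x_N$ with $d_\Omega(x_{j-1},x_j)=d(x_{j-1},x_j)$, and only then applies Lemma~\ref{lem:x_r} to each $x_j$ at scale $M=\mathrm{diam}_{(X,d)}(\alpha)\le 2r$. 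The discretization along $\alpha$ guarantees that consecutive points are genuinely adjacent inside $\Omega$; the balls $B(y_j,c_uM/4)$ around the corkscrew points then form a set $U$ whose number of connected components is bounded by volume doubling, and within each component a further packing estimate gives short connecting paths. The final path is obtained by concatenating the short within-component paths, the short paths from $x_j$ to $y_j$, and the geodesic segments $[x_{j_i},x_{j_i+1}]$ that cross between components; each segment's length is controlled by $M$ and the number of pieces by the packing count. Without the initial path-in-$D'$ and the discretization along it, one cannot justify the connectivity of the chain, and that is exactly where your proposal stops short.
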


\begin{remark}
For any $x \in \Omega$, $r>0$,
 \[ D' \cap \Omega = \{ y \in \Omega : d_{\textrm{diam}}(x,y) \leq r \}, \]
where the \emph{inner diameter metric} $d_{\textrm{diam}}$ is defined as
 \[ d_{\textrm{diam}}(x,y) := \inf\{ \textrm{diam}(\gamma) : \gamma \textrm{ path from } x \textrm{ to } y \textrm{ in }\Omega \}, \]
and the diameter is taken in the metric $d$ of the underlying space $(X,d)$. On Euclidean space, Lemma \ref{lem:metrics are comparable} is immediate from the fact that the inner diameter metric is equivalent to the inner (length) metric $d_{\Omega}$, see \cite[Theorem 3.4]{Vae98}.
\end{remark}

\begin{proof}[Proof of Lemma \ref{lem:metrics are comparable}.]
Clearly, $B_{\Omega}(x,r) \subset D'$. To show the second inclusion, we follow the line of reasoning given in \cite[Proof of Theorem 3.4]{Vae98}. Replacing $r$ by a slightly larger radius, we may assume that $x \in \Omega$. Let $y \in D' \cap \Omega$ and let $\alpha$ be a path in $D' \cap \Omega$ connecting $x$ to $y$. There exist finitely many points $x=x_1, x_2,\ldots,x_N=y$ on the path $\alpha$ so that $d_{\Omega}(x_{j-1},x_j) = d(x_{j-1},x_j)$ for all $2 \leq j \leq N$. 
Let $M \leq 2r$ be the diameter of $\alpha$ in $(X,d)$. By Lemma \ref{lem:x_r} each $x_j$ can be joined to a point $y_j = (x_j)_{2M} \in \Omega$ with $d(y_j,\widetilde\Omega \setminus \Omega) \geq c_u M/4$ by a path $\alpha_j$ of length $d_{\Omega}(y_j,x_j) \leq M/2$. Set
$Y_0 = \{ y_j : 1 \leq j \leq N \}$ and
 \[ U = \bigcup_j B(y_j,c_u M/4). \]
Let $\mathcal{P}$ be the family of connected components of $U$. There exists a constant $C = C(D_Y,c_u)$ such that for each $j$, we have
\[ V(y_j,c_u M/4) \geq C \, V(y_j,3M/2) \geq C \, V(x,M). \]
 Hence $\sharp \mathcal{P} \cdot C \, V(x,M) \leq \mu(U) \leq V(x,2M)$ and 
\begin{align*}
\sharp \mathcal{P} \leq C'(D_Y,c_u).
\end{align*}

We claim that if $y,y' \in Y_0$ are in the same component $V$ of $U$, then there exists a path $\beta$ connecting $y$ to $y'$ in $V$ such that $\textrm{length}(\beta) \leq c_1 M$ for some constant $c_1 > 0$ depending only on $c_u$ and $D_Y$.
For $z \in Y$ we write $B(z) = B(z,c_u M / 4)$. Since $V$ is connected, there is a finite sequence $y = z_0, \ldots , z_k = y' \in Y_0 \cap V$ such that $B(z_{i-1}) \cap B(z_i) \neq \emptyset$ for all $1 \leq i \leq k$. Passing to a subsequence we may assume that the balls $B(z_i)$ with even $i$ are disjoint. Since there are at least $k/2$ of these balls, we get
\[ \frac{k}{2} C \, V(x,M) \leq \mu(V) \leq  \mu(U) \leq V(x,2M), \]
so $k \leq C''(D_Y,c_u)$. For each $i$, we can connect $z_{i-1}$ to $z_i$ by a path $\beta_i$ in $\Omega$ of length at most $c_u M/2$. Now the conjunction of the paths $\beta_i$ is a path $\beta$ of length at most
\begin{align}
\textrm{length}(\beta) \leq k c_u M/2 \leq c_1 M. 
\end{align}

We define integers $0 = j_0 < j_1 < \ldots < j_s = N$ and distinct components $V_1, \ldots, V_s$ of $U$ as follows. Let $V_1$ be the component that contains $y_1$. Assuming that $j_{n-1}$ and $V_{n-1}$ are defined, we iteratively define $j_n$ to be the largest number $j$ such that $y_j \in V_{n-1}$, and let $V_n$ be the component that contains $y_{j_n+1}$.

For each $1 \leq i \leq s$ we have shown above that there exists a path $\beta_{j_i}$ connecting $y_{j_{i-1}+1}$ to $y_{j_i}$. Let $\gamma$ be the conjunction of these paths, the geodesic segments $[x_{j_i}, x_{j_i + 1}]$, $1 \leq i \leq s - 1$, and the paths $\alpha_m$ for $m = 1, j_1, j_1 + 1, j_2, j_2 + 1,\ldots, j_s = N$. Then $\gamma$ is path in $\widetilde \Omega$ that connects $x$ to $y$ and has length
\[ \textrm{length}(\gamma) \leq s c_1 M + s M + s M/2 \leq C' (c_1+2) M . \]
This means that $D' \subset B_{\widetilde\Omega}(x,C_{\Omega} r)$ with $C_{\Omega} = 2 C' (c_1+2)$.
\end{proof}

\subsection{Green function estimates} \label{ssec:GF's estimates}

Recall that for an open set $U \subset X$, $G_U=G^D_U$ is the Green function and $p^D_U$ is the heat kernel associated with $\big(\e^D_U,D(\e^D_U)\big)$.

\begin{theorem} \label{thm3:basic p^D_B estimate}
Suppose $(X,\mu,\hat\e,D(\hat\e))$ satisfies (A1)-(A2-$Y$), (VD) on $Y$ and (PI) on $Y$, and $(\e,D(\e))$ satisfies Assumptions \ref{as2:e_t} and \ref{as2:p=0}.
Let $B=B(a,R)$ with $B(a,2R) \subset Y$.
\begin{enumerate}
\item
For any fixed $\epsilon \in (0,1)$ there are constants $c, C \in (0,\infty)$ such that for any $x,y \in B(a,(1-\epsilon)R)$ and $0 < \epsilon t \leq R^2$, the Dirichlet heat kernel $p^D_B$ is bounded below by
 \[  p^D_B(t,x,y) \geq \frac{c}{V(x,\sqrt{t} \wedge R_x)} \exp\left( - C \frac{d(x,y)^2}{t} \right), \]
where $R_x = d(x,\bar B \setminus B)/2$.
\item
For any fixed $\epsilon \in (0,1)$ there are constants $c, C \in (0,\infty)$ such that for any $x,y \in B$, $t \geq (\epsilon R)^2$, the Dirichlet heat kernel $p^D_B$ is bounded above by
 \[  p^D_B(t,x,y) \leq \frac{ C }{ V(a,R) } \exp\left(-\frac{c t}{R^2} \right). \]
\item 
There exist constants $c, C \in (0,\infty)$ such that for any $x,y \in B$, $t > 0$, the Dirichlet heat kernel $p^D_B$ is bounded above by
\begin{equation}
 p^D_B(t,x,y) 
\leq C \frac{\exp \left( - c \frac{d(x,y)^2}{t} \right)}
            {V(x,\sqrt{t} \wedge R)^{1/2} V(y,\sqrt{t} \wedge R)^{1/2}}.
\end{equation}
\end{enumerate}
All the constants $c,C$ above depend only on $D_Y$, $P_Y$, $C_1$-$C_7$ and an upper bound on $C_8 R^2$.
\end{theorem}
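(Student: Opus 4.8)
The plan is to deduce all three bounds from the parabolic Harnack inequality of Theorem \ref{thm2:local VD+PI = local HI} (for both $L$ and $L^*$), the volume doubling property (VD) and Poincar\'e inequality (PI) on $Y$, and the comparison/absorption estimates of Assumptions \ref{as2:e_t}--\ref{as2:p=0}, following the Grigor'yan--Saloff-Coste circle of ideas as adapted to Dirichlet forms in \cite{SturmII} and to Dirichlet forms on subdomains in \cite{GyryaSC}. Throughout one uses that $(t,x)\mapsto p^D_B(t,x,y)$ is a nonnegative local weak solution of $\partial_t u=Lu$ on $(0,\infty)\times B$ and $(t,y)\mapsto p^D_B(t,x,y)$ one of $\partial_t u=L^*u$, both vanishing in the weak Dirichlet sense along $\partial B$, together with the identity $p^{D,*}_B(t,x,y)=p^D_B(t,y,x)$; and that, because $B(a,2R)\subset Y$, every ball $B(z,2\rho)$ with $z\in\overline B$ and $\rho\le R$ lies in $Y$, so PHI for $L$ and $L^*$ is available on all such balls.

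\emph{On-diagonal upper bound and part (ii).} First I would obtain a local Faber--Krahn inequality for $\hat\e$ at scales $\le R$ from (VD)$+$(PI) on $Y$; Assumption \ref{as2:e_t}(ii) transfers it to $\e^{\mbox{\tiny{s}}}$, and Assumption \ref{as2:e_t}(iii), the sector condition, and Assumption \ref{as2:e_t}(iv) let one absorb the killing and drift contributions up to a shift $\omega\asymp R^{-2}+C_8$, yielding a Faber--Krahn / Nash inequality for $\e^D_B$. This gives $\Vert P^D_B(t)\Vert_{1\to\infty}\le Ce^{\omega t}\big(\inf_z V(z,\sqrt t)\big)^{-1}$, hence $p^D_B(t,x,y)\le C\,V(x,\sqrt t)^{-1}$ for $t\le R^2$ (the assumed bound on $C_8R^2$ controls $e^{\omega t}$). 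For $t\ge R^2$ I would use the spectral-gap bound $\lambda_R\gtrsim R^{-2}$ from the Remark after Theorem \ref{thm:capacity estimate} (see \cite{HSC01}), giving $\Vert P^D_B(t)\Vert_{2\to2}\le e^{-\lambda_R t}$, and combine it with the case $t\asymp R^2$ and the semigroup property $p^D_B(t,x,y)=\int\!\!\int p^D_B(s_0,x,z)\,p^D_B(t-2s_0,z,z')\,p^D_B(s_0,z',y)\,d\mu(z)\,d\mu(z')$ at $s_0\asymp R^2$, estimating the middle factor in $L^2\to L^2$ and the outer ones by the on-diagonal bound; this yields $p^D_B(t,x,y)\le Ce^{-ct/R^2}\big(V(x,R)V(y,R)\big)^{-1/2}$, which with (VD) ($V(a,R)\asymp V(x,R)$) is part (ii). For part (iii), when $t\le R^2$ I would combine the on-diagonal bound with a Davies--Gaffney estimate $|\langle P^D_B(t)f,g\rangle|\le e^{-\rho^2/(4t)}e^{C_8 t}\Vert f\Vert_2\Vert g\Vert_2$ for $f,g$ supported at $d$-distance $\rho$ (valid because $\Gamma\asymp\hat\Gamma$ controls $d$ by Assumption \ref{as2:e_t}(ii), the lower-order terms perturbing the exponential weight by at most $O(C_8)$), running the standard Davies perturbation argument and using $p^D_B(t,x,y)\le p^D_B(t/2,x,x)^{1/2}p^D_B(t/2,y,y)^{1/2}$ for the on-diagonal input; when $t\ge R^2$ one has $d(x,y)<2R$, so $e^{-cd(x,y)^2/t}$ is bounded below and the bound from part (ii) already gives (iii) up to a constant.

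\emph{Part (i).} First I would prove an on-diagonal lower bound: for $t\le\eta R_x^2$ with $\eta$ small, the Dirichlet mass has not escaped to the boundary, i.e.\ $\int_B p^D_B(t,x,y)\,d\mu(y)\ge c_0>0$, by comparison with $p^D_{B(x,R_x)}$ (note $B(x,R_x)\subset B$ and $B(x,2R_x)\subset Y$) or a barrier argument; combining this with the Gaussian upper bound (iii) to confine the mass to $B(x,K\sqrt t)$ gives $p^D_B(t,x,x)\ge c\,V(x,\sqrt t)^{-1}$. Then a chaining argument in the $x$-variable: given $x,y\in B(a,(1-\epsilon)R)$, join them by a curve inside $B(a,(1-\epsilon/2)R)$ --- the minimal $d$-geodesic if $d(x,y)\le\epsilon R/2$, and the concatenation of geodesics $x\to a\to y$ otherwise, so the curve stays $d$-distance $\gtrsim\epsilon R$ from $\overline B\setminus B$ --- cover it by $N\lesssim 1+d(x,y)^2/t$ balls of radius $\rho\asymp\min\{\sqrt t,\epsilon R\}$ (so that $B(z,2\rho)\subset Y$ and $p^D_B>0$ solves $\partial_t u=Lu$ there), and apply PHI for $L$ successively starting from the on-diagonal lower bound. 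With (VD) and $R_x\gtrsim\epsilon R$ (to pass from $\sqrt t$ to $\sqrt t\wedge R_x$), this gives $p^D_B(t,x,y)\ge c'\,V(x,\sqrt t\wedge R_x)^{-1}\exp(-Cd(x,y)^2/t)$.

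\emph{Main obstacle.} The non-symmetry is handled uniformly, by always invoking PHI for both $L$ and $L^*$ and using $p^{D,*}_B(t,x,y)=p^D_B(t,y,x)$. The real work is the bookkeeping that confines the dependence of all constants on the lower-order terms to $C_1$--$C_7$ and the single scale-sensitive quantity $C_8R^2$ --- especially in proving the on-diagonal lower bound with the boundary entering at scale $R_x$ rather than $R$ --- and the case analysis in the chaining, so that the covering balls simultaneously lie in $Y$, stay away from $\overline B\setminus B$ at the correct scale, and number $\lesssim 1+d(x,y)^2/t$ for every admissible $t$ (including $t\asymp R^2$, where $\sqrt t$ overshoots $\epsilon R$ and one switches to covering balls of radius $\asymp\epsilon R$).
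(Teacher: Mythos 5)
The paper does not prove this theorem here: its stated proof is simply ``See \cite{LierlSC2},'' so there is no internal argument to compare against. That said, your sketch follows exactly the route one would expect to find in that companion paper (and that \cite{GyryaSC} follows in the symmetric case): Faber--Krahn/Nash from (VD)$+$(PI), transferred to $\e^{\mbox{\tiny{s}}}$ by Assumption \ref{as2:e_t}(ii), with the killing and skew terms absorbed at the price of a shift controlled by $C_8$; spectral-gap decay at large times; Davies--Gaffney plus an on-diagonal upper bound for the Gaussian upper bound; mass non-escape at scale $R_x$ plus PHI-chaining for the Gaussian lower bound. The overall strategy is consistent with the paper.

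One formula in your argument for part (iii) is false in the non-symmetric setting: $p^D_B(t,x,y)\le p^D_B(t/2,x,x)^{1/2}\,p^D_B(t/2,y,y)^{1/2}$. Cauchy--Schwarz on Chapman--Kolmogorov gives $p^D_B(t,x,y)\le\|p^D_B(t/2,x,\cdot)\|_{L^2}\,\|p^D_B(t/2,\cdot,y)\|_{L^2}$, but $\int p^D_B(t/2,x,z)^2\,d\mu(z)=\int p^D_B(t/2,x,z)\,p^{D,*}_B(t/2,z,x)\,d\mu(z)$, which equals $p^D_B(t,x,x)$ only when $p^D_B=p^{D,*}_B$. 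The fix is to factor through operator norms, $\|P^D_B(t)\|_{1\to\infty}\le\|P^D_B(t/2)\|_{2\to\infty}\,\|P^{D,*}_B(t/2)\|_{2\to\infty}$, bounding each factor by the Nash inequality for $\e$ and for $\e^*$ respectively (both satisfy the Assumptions by the remark after Assumption \ref{as2:p=0}); alternatively one can compare $p^D_B$ and $p^{D,*}_B$ near the diagonal using PHI for $L$ and $L^*$. You already note that one should invoke PHI for both $L$ and $L^*$, so this is a bookkeeping slip rather than a conceptual gap, but as literally written that step would not go through.
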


\begin{proof}
See \cite{LierlSC2}.
\end{proof}

\begin{lemma} \label{lem:Green function exists}
Let $B(a,2R) \subset Y$. Then for any relatively compact, open set $V \subset B(a,R)$, the Green function $y \mapsto G_V(x,y)$ is in $\F^0_{\mbox{\em{\tiny{loc}}}}(V,V\setminus\{x\})$ for any fixed $x \in V$.
\end{lemma}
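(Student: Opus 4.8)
The plan is to realize $G_V(x,\cdot)$ as a limit of nice functions and to localize. First I would recall the probabilistic/potential-theoretic construction of the Green function: $G_V(x,y) = \int_0^\infty p^D_V(t,x,y)\,dt$, and that by the very definition of $\e^D_V$ the map $y \mapsto p^D_V(t,x,y)$ lies in $\F^0(V)$ for each fixed $t>0$. The Dirichlet heat kernel upper bounds from Theorem \ref{thm3:basic p^D_B estimate}, applied on a ball $B(a,R)$ containing $V$ (note $G_V \le G_{B(a,R)}$ by domain monotonicity after enlarging, or one works directly with $V$ relatively compact), together with the Gaussian/exponential decay in $t$, show the time integral converges locally uniformly away from the diagonal; combined with (PHI)/H\"older continuity one gets that $G_V(x,\cdot)$ is a local weak solution of $L^* u = 0$ (equivalently $L u = 0$ for the adjoint form) on $V \setminus \{x\}$. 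So the content of the lemma is the membership in $\F^0_{\mbox{\tiny loc}}(V, V\setminus\{x\})$, i.e.\ that $G_V(x,\cdot)$ can be cut off near $\partial V$ inside $\F^0(V)$.

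Next I would fix an open set $A \subset V\setminus\{x\}$ that is relatively compact in $\overline{V}$ with $d_V(A, V\setminus(V\setminus\{x\})) = d_V(A,\{x\}) > 0$, i.e.\ $A$ stays a definite inner-distance away from $x$; I must produce $f^\sharp \in \F^0(V)$ agreeing with $G_V(x,\cdot)$ a.e.\ on $A$. The natural candidate is $f^\sharp = \phi\,G_V(x,\cdot)$ for a suitable cutoff $\phi$. Here I would combine two cutoffs: one, $\chi$, that is $1$ on a neighborhood of $A$ and vanishes in a neighborhood of $x$ — this keeps us away from the diagonal singularity, and on the support of $\chi$ the function $G_V(x,\cdot)$ is bounded and lies in $\F(V)$ by the heat-kernel estimates plus Caccioppoli; two, the fact that $G_V(x,\cdot) = G_V^D \nu$ type representations / the equilibrium-potential description from Section \ref{ssec:... capacity} show $\chi G_V(x,\cdot) \in \F^0(V)$ because $G_V$ is built from $p^D_V(t,x,\cdot) \in \F^0(V)$ and $\F^0(V)$ is closed under multiplication by bounded $\F\cap L^\infty$ functions and under the $\e^D_{V,1}$-limit used in the time integral. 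Concretely, $\chi\int_\epsilon^{T} p^D_V(t,x,\cdot)\,dt \in \F^0(V)$ for each $\epsilon,T$, and letting $\epsilon\downarrow 0$, $T\uparrow\infty$ the $\e^D_{V,1}$-Cauchy property (controlled by the Gaussian upper bound on $\mathrm{supp}\,\chi$, which is bounded away from $x$) gives the limit in $\F^0(V)$.

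The main obstacle is the singularity at $x$: one cannot simply say $G_V(x,\cdot)\in\F^0(V)$ because near the diagonal its energy blows up, so everything must be phrased after multiplying by a cutoff that vanishes near $x$, and one has to check that this cutoff operation is compatible with the approximation defining $G_V$ and lands in $\F^0(V)$ rather than merely in $\F(V)$. I would handle this by doing the cutoff \emph{inside} the time integral at the level of the $\F^0(V)$ functions $p^D_V(t,x,\cdot)$, using that $\chi p^D_V(t,x,\cdot) \in \F^0(V)$ (product of a bounded cutoff with an $\F^0(V)$ function — legitimate since $\F^0(V)$ is an ideal in the appropriate sense under the regular Dirichlet form), and then controlling the energy of the tails via Theorem \ref{thm3:basic p^D_B estimate}(ii)-(iii) and a Caccioppoli estimate on the region $\mathrm{supp}\,\chi$, which avoids $x$. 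A secondary point to get right is that $d_V$ coincides with $d_{\e^D_V}$ (the Remark after Definition of $\F^0_{\mathrm{loc}}$) so that the ``$d_V(A,\cdot)>0$'' hypothesis really does buy a genuine collar in which the cutoff can be chosen with controlled energy; this is routine given the hypotheses (A1), (A2-$Y$) and $V$ relatively compact.
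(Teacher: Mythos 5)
Your proposal is correct and follows essentially the same route as the paper's proof: write $G_V(x,\cdot)=\int_0^\infty p^D_V(t,x,\cdot)\,dt$ with $p^D_V(t,x,\cdot)\in\F^0(V)$, multiply by a cutoff $\psi$ supported away from $x$, and show that the truncated time integrals $\psi\int_a^b p^D_V(t,x,\cdot)\,dt$ are Cauchy in $\F^0(V)$ by combining the heat kernel upper bounds of Theorem \ref{thm3:basic p^D_B estimate} on $\operatorname{supp}\psi$ (which controls both the $L^2$-tails and the source term $p^D_V(a,x,\cdot)-p^D_V(b,x,\cdot)$) with a Caccioppoli-type energy estimate for $g=\int_a^b p^D_V(t,x,\cdot)\,dt$, which is exactly the estimate the paper derives for $\e(\psi g,\psi g)$. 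The one organizational difference — you cut off at the level of $p^D_V(t,x,\cdot)$ before integrating in time, while the paper cuts off the integrated function — is immaterial by Fubini.
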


\begin{proof}
We follow \cite[Lemma 4.7]{GyryaSC}. Recall that the map $y \mapsto p^D_V(t,x,\cdot)$ is in $\F^0(V)$. The heat kernel upper bounds of Theorem \ref{thm3:basic p^D_B estimate} imply that $\psi G_V(x,\cdot) \in L^2(X,\mu)$ for any continuous function $\psi$ with compact support $K$ in $X \setminus \{ x \}$. Indeed, by the set monotonicity of the kernel and Theorem \ref{thm3:basic p^D_B estimate}, there are constants $c, C \in (0,\infty)$, depending on $R$, such that for all $t \geq R^2$ and $z,y \in V$,
\begin{align} \label{eq:4.3}
p^D_V(t,z,y) \leq C e^{-c t / R^2},
\end{align}
and there are constants $c',C' \in (0,\infty)$ depending on $R$ such that for all $t>0$ and $z,y \in V$,
\begin{align} \label{eq:4.4}
p^D_V(t,z,y) \leq C' e^{-c'/t}. 
\end{align}
This shows that the integral $\psi G_V(x,\cdot) = \int_0^{\infty} \psi p^D_V(t,x,\cdot)dt$ converges at $0$ and $\infty$ in $L^2(X,\mu)$. Hence $\psi G_V(x,\cdot)$ is in $L^2(X,\mu)$.

Next, we show that the integral also converges in $\F^0(V)$. Let $\psi$ be as above with the additional property that $d\Gamma(\psi,\psi) \leq d\mu$ on $X$.
For fixed $0 < a < b < \infty$, set $g = \int_a^b p^D_V(t,x,\cdot) dt$ and observe that $\psi g, \psi^2 g \in \F^0(V)$. 
By the Cauchy-Schwarz inequality,
\begin{align*}
\e(\psi g, \psi g)
 \leq & \, 2 \int_{V} g^2 d\Gamma(\psi,\psi) 
       + 2 \int_{V} \psi^2  d\Gamma(g,g) 
       +  \int_{V} \psi^2 g^2 d\kappa \\    
\leq & \, 2 \sup \psi^2 \left( \int_{K \cap V} d\Gamma(g,g) + \int_{K \cap V} g^2 d\kappa \right) \\
&  + 2 \sup \frac{d\Gamma(\psi,\psi)}{d\mu} \int_{K \cap V} g^2 d\mu  \\
\leq & \, C \int (-Lg) g \, d\mu 
    +  2 \int_{K \cap V} g^2 d\mu \\
 = & \, C \int_{K \cap V} g \big( p^D_V(a,x,\cdot) - p^D_V(b,x,\cdot) \big) d\mu 
    +  2 \int_{K \cap V} g^2 d\mu \\
\leq & \, C \int_{K \cap V} g \, p^D_V(a,x,\cdot) d\mu 
    + 2 \int_{K \cap V} g^2 d\mu.
\end{align*}
for some constant $C > 0$ depending on $\sup \psi^2$.
Now, observe that \eqref{eq:4.3}-\eqref{eq:4.4} imply that 
 \[ \int_{K \cap V} g^2 d\mu = \int_{K \cap V} \bigg(\int_a^b p^D_V(t,x,\cdot) dt \bigg)^2 d\mu \]
tends to $0$ when $a$, $b$ tend to infinity or when $a$, $b$ tend to $0$ (this is indeed the argument we used above to show that $G_V(x,\cdot)$ is in $L^2(X,d\mu)$). The same estimates \eqref{eq:4.3}-\eqref{eq:4.4} imply that $\int_{K \cap V} g p^D_V(a,x,\cdot) d\mu$ tends to $0$ when $a$, $b$ tend to infinity or when $a$, $b$ tend to $0$. 
This implies that the integral $\psi G_V(x,y) = \psi \int_0^{\infty} p^D_V(t,x,\cdot)dt$ converges in $\F^0(V)$ as desired.
\end{proof}

\begin{lemma} \label{lem:4.8}
\begin{enumerate}
\item
There is a constant $C$ depending only on $D_Y$, $P_Y$, $C_1$-$C_7$ and an upper bound on $C_8 R^2$, such that for any ball $B(z,2R) \subset Y$,
\begin{equation} \label{eq:GF upper estimate in a ball}
 \forall x,y \in B(z,R), \quad  G_{B(z,R)}(x,y) \leq C \int_{d(x,y)^2/2}^{2R^2} \frac{ ds }{ V(x,\sqrt{s}) }.
\end{equation}
\item
Fix $\theta \in (0,1)$. There is a constant $C$ depending only on $\theta$, $D_Y$, $P_Y$, $C_1$-$C_7$ and an upper bound on $C_8 R^2$, such that for any ball $B(z,2R) \subset Y$, 
\begin{equation} \label{eq:GF lower estimate in a ball}
 \forall x,y \in B(z,\theta R), \quad  G_{B(z,R)}(x,y) \geq C \int_{d(x,y)^2/2}^{2R^2} \frac{ds}{ V(x,\sqrt{s}) }.
\end{equation}
\end{enumerate}
\end{lemma}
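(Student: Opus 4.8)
The Green function on $V$ is the time integral of the Dirichlet heat kernel, $G_V(x,y)=\int_0^\infty p^D_V(t,x,y)\,dt$, so the plan is to combine the heat kernel bounds of Theorem \ref{thm3:basic p^D_B estimate}, the capacity estimate of Theorem \ref{thm:capacity estimate}, and the elliptic Harnack inequality. Throughout, the volume doubling property is used to pass freely between $\int_a^b s\,ds/V(x,s)$ and $\int_{a^2}^{b^2}ds/V(x,\sqrt s)$, to replace $V(y,\cdot)$ by $V(x,\cdot)$ on scales $\gtrsim d(x,y)$, and to absorb constant-factor changes of the integration limits.

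For the lower bound (ii) I would use the lower Gaussian bound of Theorem \ref{thm3:basic p^D_B estimate}(i). Since $x,y\in B(z,\theta R)$ forces $d(x,\bar B\setminus B)\ge(1-\theta)R$, that bound reads $p^D_{B(z,R)}(t,x,y)\ge c\,V(x,\sqrt t)^{-1}\exp(-Cd(x,y)^2/t)$ for $0<t\le((1-\theta)R/2)^2$. If $d(x,y)\le c(\theta)R$ one integrates this over $t\in(d(x,y)^2,c'(\theta)R^2)$, where the exponential is $\ge e^{-C}$, to get
\[ G_{B(z,R)}(x,y)\ \ge\ c\int_{d(x,y)^2}^{c'(\theta)R^2}\frac{ds}{V(x,\sqrt s)}, \]
and volume doubling converts the right side into $c''(\theta)\int_{d(x,y)^2/2}^{2R^2}ds/V(x,\sqrt s)$ (each missing end-piece of the integral is comparable to a piece already present). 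If $d(x,y)$ is comparable to $R$ one first runs an elliptic Harnack chain of length $N(\theta)$ inside $B(z,R)\setminus\{x\}$ for the positive $L^*$-solution $G_{B(z,R)}(x,\cdot)$, reducing to a point $y_0$ with $d(x,y_0)\asymp_\theta R$ small enough for the previous step; here $\int_{d(x,y)^2/2}^{2R^2}ds/V(x,\sqrt s)\lesssim_\theta R^2/V(x,R)$, which the chained estimate controls.

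For the upper bound (i), integrating the heat kernel alone is not enough: the bounds of Theorem \ref{thm3:basic p^D_B estimate}(ii)--(iii) do not record the vanishing of $p^D_{B(z,R)}$ near $\partial B(z,R)$, hence are not sharp when $x$ or $y$ is close to the boundary. Instead, fixing $x\in B(z,R)$, the function $u:=G_{B(z,R)}(x,\cdot)$ is a positive local weak solution of $L^*u=0$ in $B(z,R)\setminus\{x\}$ vanishing weakly on $\partial B(z,R)$ (Lemma \ref{lem:Green function exists}). For $\rho<\delta_{B(z,R)}(x)$, the maximum principle on $B(z,R)\setminus\overline{B(x,\rho)}$ gives $u(y)\le\sup_{\partial B(x,\rho)}u$ for $d(x,y)\ge\rho$, while on $\overline{B(x,\rho)}\setminus\{x\}$ one has $u\ge\inf_{\partial B(x,\rho)}u$; inserting the latter into the equilibrium identity $\int G_{B(z,R)}(x,w)\,d\nu_{B(x,\rho)}(w)=e_{B(x,\rho)}(x)=1$, with $\nu_{B(x,\rho)}$ supported in $\overline{B(x,\rho)}$ of total mass $\mathrm{Cap}_{B(z,R)}(B(x,\rho))$, yields $\inf_{\partial B(x,\rho)}u\le\mathrm{Cap}_{B(z,R)}(B(x,\rho))^{-1}$, and the elliptic Harnack inequality on an annulus about $\partial B(x,\rho)$ upgrades this to $\sup_{\partial B(x,\rho)}u\le C\,\mathrm{Cap}_{B(z,R)}(B(x,\rho))^{-1}$. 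Letting $\rho\uparrow\min\{d(x,y),\tfrac13\delta_{B(z,R)}(x)\}$ and invoking $\mathrm{Cap}_{B(z,R)}(B(x,\rho))^{-1}\asymp\int_\rho^R s\,ds/V(x,s)$ (Theorem \ref{thm:capacity estimate}, after reducing to a concentric configuration using the domain monotonicity of capacity and the hypothesis $B(z,2R)\subseteq Y$) gives the desired inequality when $d(x,y)\le\tfrac13\delta_{B(z,R)}(x)$; the symmetric argument applied to $w\mapsto G_{B(z,R)}(w,y)$ at the point $y$ covers the case $d(x,y)\le\tfrac13\delta_{B(z,R)}(y)$; and the remaining case, where $x$ and $y$ both lie within $O(d(x,y))$ of $\partial B(z,R)$, is handled by the boundary-harmonic-measure (survival-probability) estimate extracted from the maximum principle, together with the elementary inequality $2R-d(x,y)\gtrsim\delta_{B(z,R)}(x)+\delta_{B(z,R)}(y)$, which controls the right-hand side $\asymp\int_{d(x,y)/2}^R s\,ds/V(x,s)$ from below.

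The crux is the boundary regime of the upper bound: because the available heat kernel bounds are not sharp near $\partial B(z,R)$, the vanishing of $G_{B(z,R)}(x,\cdot)$ there must be extracted from the maximum principle and the capacity estimate; the accompanying capacity comparisons (matching $\mathrm{Cap}_{B(z,R)}(B(x,\rho))$ to the concentric-ball capacity of Theorem \ref{thm:capacity estimate} when only $B(z,2R)\subseteq Y$ is assumed) and the "both points near the boundary" case are where the technical care concentrates. The lower bound is easier, the only wrinkle being the Harnack-chaining step needed to reach $d(x,y)\asymp 2\theta R$, with constants that legitimately degrade as $\theta\to1$.
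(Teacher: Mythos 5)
Your lower bound argument is essentially the standard one and matches the approach that the paper implicitly endorses by citing the heat-kernel lower bound of Theorem~\ref{thm3:basic p^D_B estimate}(i): integrate the Gaussian lower bound over a window of times $\asymp d(x,y)^2$ to $\asymp R^2$, then use a Harnack chain and volume doubling to handle $d(x,y)\asymp R$ and to adjust the integration limits. That part is fine, and the constants degrading as $\theta\to1$ is exactly as expected.

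For the upper bound you depart from the paper's route. The paper's proof is a reference to \cite[Lemma 4.8]{GyryaSC} combined with Theorem~\ref{thm3:basic p^D_B estimate}; the intended argument integrates the heat-kernel upper bounds in time, splitting at $t\asymp R^2$ and using Theorem~\ref{thm3:basic p^D_B estimate}(iii) for the short-time piece and (ii) for the long-time piece, with volume doubling absorbing the off-diagonal volume factor. You instead go via the equilibrium potential and the maximum principle. Your stated motivation --- that the heat-kernel upper bounds do not record the vanishing of $p^D_{B(z,R)}$ near $\partial B(z,R)$ --- does identify a genuine subtlety when $d(x,y)$ is very close to $2R$, where $\int_{d(x,y)^2/2}^{2R^2}ds/V(x,\sqrt s)\asymp (2R^2-d(x,y)^2/2)/V(x,R)$ degenerates while the integrated heat-kernel bound does not; but in the regime that the Lemma is actually invoked (Lemmas~\ref{lem:4.9} and \ref{lem:4.13}, where $d(x,y)$ is comparable to the outer radius but bounded away from its diameter, or both points stay strictly inside), the integrated bound suffices.

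However, your replacement argument has a concrete gap precisely in the regime you flagged. In the step
\[ G_{B(z,R)}(x,y)\ \le\ C\,\mathrm{Cap}_{B(z,R)}\bigl(B(x,\rho)\bigr)^{-1}\ \lesssim\ \int_\rho^{R}\frac{s\,ds}{V(x,s)} \]
you invoke Theorem~\ref{thm:capacity estimate} ``after reducing to a concentric configuration using domain monotonicity.'' Domain monotonicity goes the wrong way for you when $x$ is near $\partial B(z,R)$: from $B(x,\delta_{B(z,R)}(x))\subset B(z,R)$ one only gets
\[ \mathrm{Cap}_{B(z,R)}\bigl(B(x,\rho)\bigr)^{-1}\ \ge\ \mathrm{Cap}_{B(x,\delta_{B(z,R)}(x))}\bigl(B(x,\rho)\bigr)^{-1}, \]
which is a \emph{lower} bound, not an upper bound, on the quantity you need to control. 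The useful inequality $\mathrm{Cap}_{B(z,R)}(B(x,\rho))^{-1}\le\mathrm{Cap}_{B(x,R')}(B(x,\rho))^{-1}$ requires $B(x,R')\subset B(z,R)$, which forces $R'\le R-d(x,z)$; under the standing hypothesis $B(z,2R)\subset Y$ you cannot take $R'$ comparable to $R$ once $x$ leaves $B(z,R/2)$. So the capacity comparison breaks down exactly in the boundary regime, and the ``both points within $O(d(x,y))$ of the boundary'' case --- which you reduce to a ``boundary-harmonic-measure estimate'' --- is asserted rather than proven. There is also a smaller issue in the non-symmetric setting: the equilibrium identity $G_{B(z,R)}\nu_{B(x,\rho)}\equiv 1$ on $B(x,\rho)$ must be stated with care, since there are two equilibrium measures ($e_{A,\alpha}$ and $\hat e_{A,\alpha}$ in the paper's notation) and $G$ is not symmetric; you should specify which $G$, which measure, and that the pairing matches before passing to $\inf_{\partial B(x,\rho)}u\cdot\mathrm{Cap}\le 1$.

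In summary: (ii) is correct and matches the paper's approach; (i) takes a genuinely different route whose central capacity step fails to close in the boundary regime, which is the only regime where the alternative would have been needed.
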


\begin{proof}
See \cite[Lemma 4.8]{GyryaSC} and use the estimates of Theorem \ref{thm3:basic p^D_B estimate}.
\end{proof}

Recall that for an open set $U \subset X$, $B_U(x,r) = \{ y \in U : d_U(x,y) < r \}$, where $d_U$ is the inner metric of the domain $U$. Let $G_{B_U(x,r)}$ be the Green function on $B_U(x,r)$.

\begin{lemma} \label{lem:4.9}
Fix $\theta \in (0,1)$. Let $U \subset X$ be an open set.
\begin{enumerate}
\item
There is a constant $C$ depending only on $\theta$, $D_Y$, $P_Y$, $C_1$-$C_7$ and an upper bound on $C_8 R^2$ such that for any $B(z,2R) \subset Y$,
\begin{equation} \label{eq:GF upper estimate in U cap B}
  G_{B_U(z,R)}(x,y) \leq G_{U \cap B(z,R)}(x,y) \leq C \frac{ R^2 }{ V(x,R) },
\end{equation}
for all $x,y \in U \cap B(z,R)$ with $d(x,y) \geq \theta R$. 
\item
Let $U$ be an open subset so that $\overline{U} \subset Y$. Consider a ball $B_U(z,2R) \subset Y$ and suppose that any two points in $B_U(z,\delta R)$ can be connected by a $(c_u,C_u)$-inner uniform curve in $U$, for some $\delta < 1/3$. Then there is a constant $C$ depending only on $\theta$, $D_Y$, $P_Y$, $c_u$, $C_u$, $C_1$-$C_7$ and an upper bound on $C_8 R^2$,  such that 
\begin{equation} \label{eq:GF lower estimate in U cap B}
  G_{B_U(z,R)}(x,y) \geq C \frac{ R^2 }{ V(x,R) },
\end{equation}
for all $x,y \in B_U(z, \delta R)$ with $d(x, X \setminus U)$, $d(y, X \setminus U) \in (\theta R, \infty)$ and $d_U(x,y) \leq \delta R / C_u$.
\end{enumerate}
\end{lemma}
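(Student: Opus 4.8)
The plan is to deduce both statements from the pointwise Green function bounds on Euclidean-type balls (Lemma \ref{lem:4.8}) together with the comparison between inner balls of $U$ and connected components of metric balls (Lemma \ref{lem:metrics are comparable}), plus the volume doubling property. For part (i), the first inclusion $G_{B_U(z,R)} \leq G_{U \cap B(z,R)}$ is just domain monotonicity of the Green function, since $B_U(z,R) \subset U \cap B(z,R)$. For the second inequality, one notes that $U \cap B(z,R) \subset B(z,R)$, so by domain monotonicity again $G_{U\cap B(z,R)}(x,y) \leq G_{B(z,R)}(x,y)$, and then one applies the upper bound \eqref{eq:GF upper estimate in a ball} of Lemma \ref{lem:4.8}(i) with the ball $B(z,R)$ (valid since $B(z,2R)\subset Y$). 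Using $d(x,y)\geq \theta R$, the integral $\int_{d(x,y)^2/2}^{2R^2} ds/V(x,\sqrt s)$ is taken over a range of $s$ comparable to $R^2$, and by volume doubling $V(x,\sqrt s) \asymp V(x,R)$ on that range (with constants depending on $\theta$ and $D_Y$); hence the integral is bounded by $C R^2/V(x,R)$, giving \eqref{eq:GF upper estimate in U cap B}.

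For part (ii), the lower bound, the strategy is to transfer the interior lower estimate \eqref{eq:GF lower estimate in a ball} from a small metric ball around a well-chosen intermediate point to $B_U(z,R)$ via domain monotonicity, exploiting inner uniformity to keep everything inside $U$. Given $x,y \in B_U(z,\delta R)$ with $d(x,X\setminus U), d(y,X\setminus U) > \theta R$ and $d_U(x,y) \leq \delta R/C_u$, connect $x$ to $y$ by a $(c_u,C_u)$-inner uniform curve $\gamma$ in $U$; its length is at most $C_u d_U(x,y) \leq \delta R$, and the uniformity condition forces every point of $\gamma$ to stay at distance at least $c_u \cdot (\text{something} \asymp d_U(x,y))$ from $X\setminus U$ — combined with the hypothesis on $d(x,\cdot),d(y,\cdot)$, one gets a fixed metric ball, say $B(w, \eta R)$ for a suitable midpoint $w$ on $\gamma$ and $\eta = \eta(\theta,c_u,C_u)$, that contains both $x$ and $y$, is contained in $U$, and has $B(w,2\eta R)\subset Y$ (shrinking $\eta$ and using $\delta<1/3$, $B_U(z,2R)\subset Y$). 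Then $B(w,\eta R) \subset U \cap B(z,R) \cap \dots$; more precisely one arranges $B(w,\eta R) \subset B_U(z,R)$ using that $B(w,\eta R)$ is a connected subset of $U$ lying within inner distance $<R$ of $z$ (here Lemma \ref{lem:metrics are comparable} or a direct length estimate is used to bound $d_U$ by a multiple of the metric radius plus $d_U(z,w)\leq 2\delta R$). Domain monotonicity gives $G_{B_U(z,R)}(x,y) \geq G_{B(w,\eta R)}(x,y)$, and since $x,y \in B(w,\theta' \eta R)$ for an appropriate $\theta'<1$, Lemma \ref{lem:4.8}(ii) applied to $B(w,\eta R)$ yields $G_{B(w,\eta R)}(x,y) \geq C\int_{d(x,y)^2/2}^{2(\eta R)^2} ds/V(x,\sqrt s)$; bounding below the integrand on a subinterval of length $\asymp R^2$ and using volume doubling to replace $V(x,\sqrt s)$ by $V(x,R)$ gives $\geq C R^2/V(x,R)$.

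The main obstacle I anticipate is the geometric bookkeeping in part (ii): choosing the midpoint $w$ on the inner-uniform curve and the radius $\eta R$ so that simultaneously (a) $B(w,\eta R) \subset U$ with room to spare for $B(w,2\eta R)\subset Y$, (b) $B(w,\eta R) \subset B_U(z,R)$ (an inner-ball containment, which requires controlling $d_U$ not just $d$), and (c) $x,y$ lie in a concentric ball of strictly smaller radius so that the interior lower bound applies. Inner uniformity is exactly what makes this possible — the curve cannot come close to $\partial U$ except near its endpoints, and the endpoints are kept away from $\partial U$ by hypothesis — but the constants have to be tracked carefully to see that they depend only on $\theta$, $c_u$, $C_u$ and $D_Y$ (and $C_1$--$C_7$, $C_8R^2$ through Lemma \ref{lem:4.8}). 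A clean way to handle (b) is to observe that any point $p\in B(w,\eta R)$ satisfies $d_U(z,p) \leq d_U(z,w) + d_U(w,p) \leq 2\delta R + C_{\Omega}'\eta R$ where $d_U(w,p)$ is estimated using Lemma \ref{lem:metrics are comparable} applied to $U$ (or, more elementarily, because $B(w,\eta R)\subset U$ is already metric-convex enough to bound its inner diameter), and then shrink $\eta$ so the total is $<R$.
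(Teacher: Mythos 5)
Part (i) of your proposal is correct and matches the paper's argument: domain monotonicity $G_{B_U(z,R)} \leq G_{U\cap B(z,R)} \leq G_{B(z,R)}$, then \eqref{eq:GF upper estimate in a ball} from Lemma~\ref{lem:4.8}, then $d(x,y)\geq\theta R$ together with volume doubling.

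Part (ii) has a genuine gap. You propose to find a single metric ball $B(w,\eta R)\subset U$ containing both $x$ and $y$ with room to spare, and then apply Lemma~\ref{lem:4.8}(ii) in that ball. For such a ball to contain both points one needs $\eta R \gtrsim d(x,y)$, and $d(x,y)$ can be as large as $\delta R/C_u$. On the other hand $B(w,\eta R)\subset U$ forces $\eta R \leq \delta_U(w)$, and the inner uniformity of the curve combined with the hypotheses $d(x,X\setminus U),d(y,X\setminus U)>\theta R$ only guarantees $\delta_U(w)\gtrsim c_u\theta R$ along the curve. Thus your construction requires roughly $\delta/C_u \lesssim c_u\theta$, a relation between $\delta$, $\theta$, $c_u$, $C_u$ that is \emph{not} part of the hypotheses of the lemma (and is in fact violated in the paper's own applications of Lemma~\ref{lem:4.9}(ii), e.g.\ in the proof of Theorem~\ref{thm:bHP with GF's} where $d_\Omega(x^*,y^*)\asymp r$ while $d(x^*,X\setminus\Omega)\asymp c_1 r$ with $c_1$ small). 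So the single-ball step can simply fail: there may be no admissible $\eta$. The paper avoids this with an extra step you are missing: using the inner uniform curve from $x$ to $y$, which stays at distance $\geq\epsilon_1 R$ from $X\setminus U$ and inside $B_U(z,R)$, one first applies the elliptic Harnack inequality along a chain of balls (to the $L^*$-harmonic function $G_{B_U(z,R)}(x,\cdot)$, which is positive away from $x$) to reduce to the case $d(x,y)=\eta R$ with $\eta$ a fixed small constant depending only on $c_u,C_u,\theta$. Only after this reduction does one take the ball $B(x,2\eta R)\subset B_U(z,R)$ and invoke domain monotonicity plus Lemma~\ref{lem:4.8}(ii). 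The Harnack chaining is not optional bookkeeping; it is the mechanism that removes the dependence on the relative sizes of $\delta$, $\theta$, and $d(x,y)$.
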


\begin{proof}
We follow the line of reasoning of \cite[Lemma 4.9]{GyryaSC}.
Set $B = B(z,R)$, $W = U \cap B(z,R)$. The upper bound \eqref{eq:GF upper estimate in U cap B} follows easily
from Lemma \ref{lem:4.8} and the monotonicity inequality $G_W \leq G_B$. By assumption, there is an $\epsilon_1 > 0$ such that for any $x,y$ as in (ii), there is a path in $U$ from $x$ to $y$ of length less than
$C_u d_U(x,y) \leq \delta R$ that stays at distance at least $\epsilon_1 R$ from $X \setminus U$. Since $x,y \in B_U(z,\delta R)$ and $\delta < 1/3$, this path is contained in
 \[ B_U(z,R) \cap \{ \zeta \in U : d(\zeta,X \setminus U) > \epsilon_1 R \}. \]
Using this path, the Harnack inequality easily reduces the lower bound \eqref{eq:GF lower estimate in U cap B} to
the case when $y$ satisfies $d(x,y) = \eta R$ for some arbitrary fixed $\eta \in (0,\epsilon_1)$ small enough.
Pick $\eta > 0$ so that, under the conditions of the lemma, the ball $B(x,2\eta R)$ is
contained in $B_U(z,R)$. Let $W=B_U(z,R)$. Then the monotonicity property of Green functions implies that $G_W(x,y) \geq G_{B(x,\eta R)}(x,y)$. Lemma \ref{lem:4.8} and the volume doubling property then yield
 \[  G_W(x,y) \geq C \frac{R^2}{V(x,R)}. \]
This is the desired lower bound.
\end{proof}

\section{Boundary Harnack principle} \label{sec:BHP}

\subsection{Reduction to Green functions estimates}
\label{ssec:bHP}

Let $(X, \mu, \hat\e, D(\hat\e))$ be a symmetric, strictly local, regular Dirichlet space and $Y \subset X$. Suppose (A1)-(A2-$Y$), the volume doubling condition (VD) on $Y$ and the Poincar\'e inequality (PI) on $Y$ hold.
Suppose that $(\e,D(\e))$ satisfies Assumptions \ref{as2:e_t} and \ref{as2:p=0}.
We obtain that under these assumptions, local weak solutions of $Lu=0$ (resp. $L^*u=0$) in $Y$ are harmonic functions for the associated Markov process and satisfy the maximum principle. This can be proved following the line of reasoning given in \cite[Theorem 4.3.2, Lemma 4.3.2]{FOT94} and using \cite[Proposition V.1.6, Proof of Lemma III.1.4]{MR92}.

Let $\Omega$ be a domain so that $\overline{\Omega} \subset Y$. For $\xi \in \widetilde\Omega \setminus \Omega$, set $B_{\Omega}(\xi,r) := B_{\widetilde\Omega}(\xi,r) \cap \Omega$.
Let $c_u \in (0,1)$ and $C_u \in (1,\infty)$. Let $A_3 = 2(12 + 12C_u)$, $A_0 = A_3 + 7$, $A_7 = 2/c_u + 1$, and $A_8 = 2(A_0 \vee 7 A_7)$.
Let $p:\widetilde\Omega \to \overline{\Omega}$ be the natural projection ($p(x)=x$ for $x \in \Omega$). For $\xi \in \widetilde\Omega \setminus \Omega$, let $R_{\xi}$ be the largest radius so that 
\begin{enumerate}
\item
 $B(p(\xi),A_8 R_{\xi}) \subset Y$,
\item
$B_{\widetilde\Omega}(\xi,A_0 R_{\xi}) \neq \widetilde\Omega$,
\item
$12R_{\xi}/c_u \leq  \textrm{diam}_{\Omega}(\Omega)/2$ if $\Omega$ is a bounded domain.
\item
Any two points in $B_{\widetilde\Omega}(\xi,12R_{\xi}/c_u)$ can be connected by a curve that is $(c_u,C_u)$-inner uniform in $\Omega$. 
\end{enumerate}

For $\xi \in \widetilde\Omega \setminus \Omega$ and $0< R \leq R_{\xi}$, let
\[ \lambda_{R,\xi} := \inf \{ \lambda_{R}(z) : z \in B_{\Omega}(\xi',(4(1+2/c_u)R), \xi' \in \widetilde \Omega \setminus \Omega \cap B_{\widetilde\Omega}(\xi,14R_{\xi}) \}, \]
\[ \lambda_{R}(z) = \inf_{0 \neq f \in \F^0(B)} \frac{\e^D_B(f,f)}{\int f^2 d\mu}, \quad \textrm{ where } B=B_{\Omega}(z,2(1+ 2/c_u)R). \]
Let
\[ A_{R} = (1 + C_0 C_1 / \alpha)^4 \left( 1+\inf_{\epsilon} \{ \epsilon / \alpha + \frac{1}{\epsilon} \max \{ C_2, C_3/\alpha \}  \} \right)^2, \]
where $\alpha = \min\{ 1,\lambda_{R,\xi} \}$.

\begin{theorem} \label{thm:bHP with GF's}
There exists a constant $A_1' \in (1,\infty)$ such that for any $\xi \in \widetilde\Omega\setminus \Omega$ with $R_{\xi} > 0$ and any
 \[ 0 < r < R \leq \inf \{ R_{\xi'} : \xi' \in B_{\widetilde\Omega}(\xi,7R_{\xi}) \setminus \Omega \}, \]
we have
 \[ \frac{  G_{Y'} (x,y) }{   G_{Y'} (x',y) }
     \leq A_1' \frac{  G_{Y'} (x,y') }{  G_{Y'} (x',y') }, \]
for all $x,x' \in B_{\Omega}(\xi,r)$ and $y,y' \in \Omega \cap \partial_{\Omega} B_{\widetilde\Omega}(\xi,6r)$. Here $Y' = B_{\widetilde\Omega}(\xi,A_0r)$. 
The constant $A'_1$ depends only on $D_Y$, $P_Y$, $c_u$, $C_u$, $C_0$-$C_7$, and upper bounds on $C_8 R^2$ and $A_R$.
\end{theorem}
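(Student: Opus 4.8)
The goal is a boundary Harnack inequality for the Green function $G_{Y'}$ with pole varying over a "far sphere" $\Omega \cap \partial_\Omega B_{\widetilde\Omega}(\xi,6r)$, and base points $x,x'$ close to the boundary point $\xi$. The plan is to follow Aikawa's approach (as adapted in \cite{GyryaSC}) but working systematically in the inner metric $d_\Omega$ and passing back and forth to Euclidean-type balls in $(X,d)$ via Lemma \ref{lem:metrics are comparable}. The core quantitative engine is the two-sided Green function estimate: Lemma \ref{lem:4.8} inside ordinary balls $B(z,R)\subset Y$, and Lemma \ref{lem:4.9} inside inner balls $B_U(z,R)$ of a domain $U$ satisfying the local inner-uniformity hypothesis. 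The numerology in $A_3, A_0, A_7, A_8$ and in the definition of $R_\xi$ and $\lambda_{R,\xi}$ is arranged precisely so that all the balls appearing in the argument are legitimate, i.e.\ lie in $Y$, are not all of $\widetilde\Omega$, admit inner-uniform curves between their points, and have controlled Dirichlet eigenvalue (hence controlled capacity constant $A_R$); the first bookkeeping step is to record these containments.

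First I would fix a "corkscrew" reference point: by Lemma \ref{lem:x_r} there is a point $\xi_r$ (more precisely a point at inner distance of order $r$ from $\xi$ that sits at distance $\gtrsim c_u r$ from $X\setminus\Omega$), which will serve as the comparison point through which all Harnack chains are routed. The strategy is the classical three-step comparison. Step (a), the \emph{upper bound}: for $x\in B_\Omega(\xi,r)$ and $y$ on the far sphere, bound $G_{Y'}(x,y)$ above by $G_{Y'}(\xi_r,y)$ times a capacity-type factor; this uses the maximum principle for $L$-harmonic functions (noted just before the statement), the fact that $x\mapsto G_{Y'}(x,y)$ is $L$-harmonic away from $y$ and vanishes weakly on $\partial_\Omega Y'$, together with the lower Green estimate of Lemma \ref{lem:4.9}(ii) at the point $\xi_r$ and an upper Green estimate from Lemma \ref{lem:4.8}(i) / Lemma \ref{lem:4.9}(i). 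Step (b), the \emph{lower bound}: symmetrically, $G_{Y'}(x,y) \gtrsim G_{Y'}(\xi_r,y)$ times a comparable factor, again via the maximum principle applied on the region $B_\Omega(\xi,\,\text{something}) \setminus \overline{B_\Omega(\xi,cr)}$, bounding $G_{Y'}(\cdot,y)$ below by a multiple of $G_{Y'}(\cdot,y)$ evaluated where it is already controlled, and pushing the estimate to $x$ by a Harnack chain of bounded length (bounded because of inner uniformity) along an inner-uniform curve from $x$ to $\xi_r$. The elliptic Harnack inequality (EHI) from Theorem \ref{thm2:local VD+PI = local HI}, applied along such a curve, gives $G_{Y'}(x,y)\asymp G_{Y'}(\xi_r,y)$ with multiplicative constant independent of $y$ on the far sphere. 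Step (c): combine — the ratio $G_{Y'}(x,y)/G_{Y'}(x',y)$ is then within a bounded factor of $G_{Y'}(\xi_r,y)/G_{Y'}(\xi_r,y) = 1$, uniformly in $y$, and similarly for $y'$; dividing gives the claim with $A_1'$ the product of the accumulated constants.

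The subtle point — and the reason one cannot simply quote EHI — is that the comparison point $\xi_r$ must be reachable from \emph{every} $x\in B_\Omega(\xi,r)$ by a Harnack chain of \emph{uniformly bounded} length that stays at a definite distance from the boundary, and simultaneously the Green function at $\xi_r$ must satisfy the matching two-sided bound with the \emph{same} constants for \emph{all} far poles $y$. This is exactly where inner uniformity and Lemma \ref{lem:metrics are comparable} enter: an inner-uniform curve from $x$ to $\xi_r$ has length $\lesssim C_u d_\Omega(x,\xi_r)\lesssim C_u r$ and stays at distance $\gtrsim c_u r$ from $X\setminus\Omega$ near its midpoint; covering it by balls of radius $\sim c_u r$ that are genuine $d$-balls inside $Y$ (via the lemma, whose hypothesis $B(p(x),4r)\subset Y$ is guaranteed by the choice of $A_8$), one gets a chain of $N=N(c_u,C_u,D_Y)$ balls, hence applies EHI $N$ times. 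The main obstacle I anticipate is making Steps (a) and (b) genuinely scale-invariant: one must verify that the capacity-type prefactors produced by Lemma \ref{lem:4.8}/\ref{lem:4.9} at scale $r$ and at scale $R$ (or $6r$) cancel, which forces one to run the maximum-principle comparison on an \emph{annular} region in the inner metric and to invoke the capacity estimate of Theorem \ref{thm:capacity estimate} to see that $\big(\mathrm{Cap}(B(\xi,r))\big)^{-1}$-type integrals telescope; keeping the dependence of constants only on $D_Y, P_Y, c_u, C_u, C_0$–$C_7$ and upper bounds on $C_8R^2$ and $A_R$ requires care, since the non-symmetric terms contribute eigenvalue-dependent factors through $\alpha=\min\{1,\lambda_{R,\xi}\}$, which is exactly why $\lambda_{R,\xi}$ was defined as an infimum over a whole neighbourhood of $\xi$ rather than at a single ball.
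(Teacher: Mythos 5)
Your plan collapses at Step (b)/(c): the claim that $G_{Y'}(x,y)\asymp G_{Y'}(\xi_r,y)$ with constants uniform over $x\in B_\Omega(\xi,r)$ (and hence that $G_{Y'}(x,y)/G_{Y'}(x',y)\asymp 1$) is simply false. The Green function $G_{Y'}(\cdot,y)$ vanishes weakly along $\partial_\Omega Y'$, so $G_{Y'}(x,y)\to 0$ as $x$ approaches $\widetilde\Omega\setminus\Omega$, while $G_{Y'}(\xi_r,y)\gtrsim r^2/V(\xi,r)$ stays bounded below; the ratio therefore degenerates as $\delta_\Omega(x)\to 0$. Your proposed Harnack-chain justification does not repair this: an inner-uniform curve $\gamma$ from $x$ to $\xi_r$ satisfies $\delta_\Omega(\gamma(t))\gtrsim c_u\min\{d_\Omega(x,\gamma(t)),d_\Omega(\gamma(t),\xi_r)\}$, so near the endpoint $x$ the curve is forced to come within distance $\sim\delta_\Omega(x)$ of the boundary. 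Covering that part of $\gamma$ with balls on which (EHI) applies requires ball radii shrinking to $\sim\delta_\Omega(x)$, giving a chain of length $\sim\log(r/\delta_\Omega(x))$ — not $N(c_u,C_u,D_Y)$ — and the accumulated Harnack constant blows up precisely for those $x$ for which the boundary Harnack principle is non-trivial. The content of the theorem is not that $G_{Y'}(x,\cdot)$ is comparable to $G_{Y'}(\xi_r,\cdot)$ at a fixed scale, but that the function $y\mapsto G_{Y'}(x,y)/G_{Y'}(x',y)$ is approximately constant on the far sphere, even though its value may be very large or very small.

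What your outline is missing is the Carleson-type estimate (Lemma~\ref{lem:4.14}) and the capacity-width apparatus behind it (Lemmas~\ref{lem:4.12}, \ref{lem:4.13}), which are the actual engine of the proof. The paper reduces the theorem to the quasi-factorization
$G_{Y'}(x,y)\asymp \dfrac{G_{Y'}(x^*,y)}{G_{Y'}(x^*,y^*)}\,G_{Y'}(x,y^*)$
for fixed reference points $x^*$, $y^*$ at definite distance from $\widetilde\Omega\setminus\Omega$. The easy half of this follows by a maximum-principle/Harnack comparison on $\partial_\Omega B_{\widetilde\Omega}(y^*,c_0 r)$. The hard half uses Lemma~\ref{lem:4.14} twice (once at $\xi$, once at a boundary point $\xi'$ near $y$ when $y$ is close to $\widetilde\Omega\setminus\Omega$), together with the adjoint Green function $G^*$, to convert harmonic-measure bounds into Green-function bounds, and this step is exactly where $x$ near the boundary is handled correctly. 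Lemma~\ref{lem:4.14} is itself proved by the Aikawa level-set decomposition $U_j = \{x: e^{-2^{j+1}}\le \epsilon_1 V(\xi,r)r^{-2}G'(x) < e^{-2^j}\}$, the capacity-width bound $w_\eta(V_j)\lesssim r\,e^{-2^j/\sigma}$ from Lemma~\ref{lem:4.12}, and the exponential harmonic-measure estimate $\omega\le\exp(2-a_1 r/w(U))$ from Lemma~\ref{lem:4.13}, iterated over a telescoping sequence of radii $R_j\downarrow r$. None of this is replaceable by a single bounded-length Harnack chain from $x$ to a corkscrew point, so the proposal as written does not establish the theorem.
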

The proof of this theorem is the content of Section \ref{ssec:bHP for L} below. It is based on the estimates for the Green functions in Section \ref{ssec:GF's estimates}.

\begin{theorem} \label{thm2:bHP for u}
There exists a constant $A_1 \in (1,\infty)$ such that for any $\xi \in \widetilde\Omega\setminus \Omega$ with $R_{\xi}>0$ and any 
 \[ 0 < r < R \leq \inf \{ R_{\xi'} : \xi' \in B_{\widetilde\Omega}(\xi,7R_{\xi}) \setminus \Omega \}, \]
and any two non-negative weak solutions $u,v$ of $ L u = 0$ in $Y' = B_{\widetilde\Omega}(\xi,A_0r)$ with weak Dirichlet boundary condition along $B_{\widetilde\Omega}(\xi,6r) \setminus \Omega$, we have
 \[  \frac{u(x)}{u(x')} \leq A_1 \frac{v(x)}{v(x')}, \]
for all $x, x' \in B_{\Omega}(\xi,r)$. The constant $A_1$ depends only on the volume doubling constant $D_Y$, the Poincar\'e constant $P_Y$, the constants $C_0$-$C_7$ which give control over the skew-symmetric part and the killing part of the Dirichlet form, the inner uniformity constants $c_u$, $C_u$, and upper bounds on $C_8 R^2$ and $A_R$.
\end{theorem}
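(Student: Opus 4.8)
```latex
The plan is to deduce Theorem~\ref{thm2:bHP for u} from Theorem~\ref{thm:bHP with GF's} by the standard Aikawa-type comparison, first expressing an arbitrary non-negative weak solution vanishing weakly along part of the boundary in terms of the Green function, and then invoking the Green function boundary Harnack principle already established. Fix $\xi$ and $0<r<R$ as in the statement, and write $Y'=B_{\widetilde\Omega}(\xi,A_0 r)$ and $W = B_{\widetilde\Omega}(\xi,6r)\cap\Omega$. The first step is a \emph{representation lemma}: a non-negative weak solution $u$ of $Lu=0$ in $Y'$ that vanishes weakly along $B_{\widetilde\Omega}(\xi,6r)\setminus\Omega$ admits, on $B_\Omega(\xi,r)$, a two-sided comparison
\[
 u(x) \asymp u(x_r^\ast)\, \frac{ G_{Y'}(x, y_0) }{ G_{Y'}(x_r^\ast, y_0) },
\]
where $y_0$ is a fixed reference point at inner distance comparable to $6r$ from $\xi$ (a point on $\Omega\cap\partial_\Omega B_{\widetilde\Omega}(\xi,6r)$ of the type produced by Lemma~\ref{lem:x_r}, sitting well inside $\Omega$), and $x_r^\ast$ is a corkscrew point for $B_\Omega(\xi,r)$. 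This is the heart of the matter and it is proved by the usual ``carrot/maximum principle'' sandwiching argument: using the interior Harnack inequality (the elliptic (EHI) coming from Theorem~\ref{thm2:local VD+PI = local HI}) along an inner-uniform curve from $x$ to $x_r^\ast$ one controls $u$ far from the boundary, and then one compares $u$ with suitable multiples of $G_{Y'}(\cdot,y)$ on the boundary of an intermediate region via the maximum principle, exploiting that both $u$ and $x\mapsto G_{Y'}(x,y)$ are non-negative solutions vanishing weakly along the same portion of $\partial\Omega$.

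The second step is purely formal: apply the representation to both $u$ and $v$ to get, for $x,x'\in B_\Omega(\xi,r)$,
\[
 \frac{u(x)}{u(x')} \asymp \frac{ G_{Y'}(x,y_0) }{ G_{Y'}(x',y_0) }
 \asymp \frac{ v(x)/v(x') }{ 1 },
\]
where the middle comparison uses that the same Green-function ratio appears for $v$ with the same reference point $y_0$, and the implied constants depend only on the quantities listed. Here one must take a little care that the reference point $y_0$ (and the corkscrew point) can be chosen on $\Omega\cap\partial_\Omega B_{\widetilde\Omega}(\xi,6r)$ so that Theorem~\ref{thm:bHP with GF's} applies verbatim with $y=y'=y_0$; in fact one does not even need the full strength of that theorem for the final chaining, but it is what packages the Green-function boundary behaviour cleanly. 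Tracking the constants: the interior Harnack steps contribute factors depending on $D_Y$, $P_Y$, $C_0$--$C_7$ and an upper bound on $C_8 R^2$ (through Theorem~\ref{thm2:local VD+PI = local HI}), the inner-uniform curve construction contributes $c_u$, $C_u$ (via Lemma~\ref{lem:metrics are comparable} and Lemma~\ref{lem:x_r}), and the Green-function comparison contributes $A_R$; combining these yields the claimed dependence of $A_1$.

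The main obstacle I anticipate is the representation lemma in the non-symmetric setting with Dirichlet boundary conditions understood weakly: one must make sure that the maximum principle can be applied to $u - \lambda\, G_{Y'}(\cdot,y)$ on the right intermediate domain despite $u$ only being a \emph{local} weak solution with only \emph{weak} (i.e., local membership in $\F^0$) vanishing along the boundary, and despite $L$ being non-symmetric (so one uses the maximum principle for $L$ and, where a dual statement is needed, for $L^\ast$, both of which are available since both satisfy (PHI) by Theorem~\ref{thm2:local VD+PI = local HI}). A second technical point is geometric: one needs that $B_\Omega(\xi,r)$ can be joined to the corkscrew/reference points by inner-uniform curves staying inside $B_{\widetilde\Omega}(\xi,A_0 r)$, which is exactly what the choices $A_3 = 2(12+12C_u)$, $A_0 = A_3+7$ and the defining conditions (i)--(iv) of $R_\xi$ are engineered to guarantee, together with Lemma~\ref{lem:metrics are comparable} relating inner balls to components of metric balls so that the hypotheses of Lemma~\ref{lem:4.9} (hence the Green-function two-sided bounds) are met.
```
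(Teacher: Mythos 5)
Your high-level plan — represent $u$ via Green functions and invoke Theorem~\ref{thm:bHP with GF's} — is the right idea, but the way you set up the ``representation lemma'' opens a genuine gap and in effect sidelines the theorem you say you want to use.

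The paper's proof splits the work differently. Step one is an \emph{exact} integral representation
\[
u(x) = \int_{\Omega\cap\partial_\Omega B_{\widetilde\Omega}(\xi,6r)} G_{Y'}(x,y)\,d\nu_u(y),
\qquad x\in\Omega\cap B_{\widetilde\Omega}(\xi,r),
\]
where $\nu_u$ is the measure furnished by the Riesz decomposition of the regularized reduced function $\hat R_u^{\Omega\cap\partial_\Omega B_{\widetilde\Omega}(\xi,6r)}$. This step involves no boundary Harnack content whatsoever: one checks $u=\hat R_u$ on $\Omega\cap B_{\widetilde\Omega}(\xi,6r)$ by the maximum principle (using that $u$ is $L$-harmonic in $Y'$ and vanishes quasi-everywhere on $B_{\widetilde\Omega}(\xi,6r)\setminus\Omega$), and then invokes the standard balayage/potential-theoretic representation of $\hat R_u$ as a Green potential. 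Step two is the only place the hard analysis enters: because $\nu_u$ is supported on $\Omega\cap\partial_\Omega B_{\widetilde\Omega}(\xi,6r)$, one applies Theorem~\ref{thm:bHP with GF's} with arbitrary $y$ in that support and a single fixed $y'$, so the estimate $\frac{G_{Y'}(x,y)}{G_{Y'}(x',y)}\leq A_1'\frac{G_{Y'}(x,y')}{G_{Y'}(x',y')}$ holds \emph{uniformly in $y$}, and integrating both sides against $d\nu_u(y)$ gives the sandwich $A_1'^{-1} u(x)\leq\frac{G_{Y'}(x,y')}{G_{Y'}(x',y')}u(x')\leq A_1' u(x)$. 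The same for $v$, and chaining finishes it.

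Your ``representation lemma'' instead asserts the two-sided pointwise comparison $u(x)\asymp u(x_r^*)\,G_{Y'}(x,y_0)/G_{Y'}(x_r^*,y_0)$ and proposes to prove it by a carrot/maximum-principle sandwich. But this comparison \emph{is} the boundary Harnack principle (taking $v=G_{Y'}(\cdot,y_0)$), so proving it ``directly'' means redoing the heart of the proof of Theorem~\ref{thm:bHP with GF's} — in particular the Carleson-type estimate and the capacity-width/harmonic-measure machinery of Lemmas~\ref{lem:4.12}--\ref{lem:4.14} — none of which you develop or cite. You notice this yourself when you observe that with $y=y'=y_0$ Theorem~\ref{thm:bHP with GF's} ``applies verbatim'' and that you ``do not even need the full strength of that theorem'': with $y=y'$ the theorem is a tautology, so in your chain the Green function BHP is doing no work at all, and the entire burden is silently shifted onto the unproved representation lemma. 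The fix is to separate the two ingredients as the paper does: first establish the exact integral representation via reduced functions (soft potential theory, no comparison estimate needed), and only then feed in the full, $y$-uniform statement of Theorem~\ref{thm:bHP with GF's} under the integral sign.
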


\begin{remark}
\begin{enumerate}
\item
The hypothesis that $R_{\xi}>0$ can be understood as ``local inner uniformity''. Clearly, $R_{\xi}>0$ holds true at every boundary point $\xi$ of an inner uniform domain. Since the statement of Theorem \ref{thm2:bHP for u} is local, it is natural to assume that only points near $\xi$ need to be connected by inner uniform curves.
\item
A consequence of Theorem \ref{thm2:bHP for u} is that the ratio $\frac{u}{v}$ of the two local weak solutions $u$ and $v$ is H\"older continuous.
\item
Applications to estimates of the heat kernel with Dirichlet boundary condition in inner uniform domains are given in the forthcoming paper \cite{LierlSC3}.
\end{enumerate}
\end{remark}

\begin{theorem} \label{thm:Martin boundary}
Let $(X,d,\mu,\hat\e,D(\hat\e))$ be a strictly local regular Dirichlet space that satisfies \emph{(A1)}, \emph{(A2-$Y$)}, \emph{(VD)} and \emph{(PI)} on $Y \subset X$. Suppose $(\e,D(\e))$ satisfies Assumptions \ref{as2:e_t} and \ref{as2:p=0}. Let $\Omega \subset Y$ be an inner uniform domain in $(X,d)$. Let $\Omega^*$ be the compactification of $\Omega$ with respect to the inner metric. Then the Martin compactification relative to $(\e,D(\e))$ of $\Omega$ is homeomorphic to $\Omega^*$ and each boundary point $\xi \in \Omega^* \setminus \Omega$ is minimal.
\end{theorem}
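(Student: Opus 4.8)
The plan is to identify the Martin compactification by producing, for each boundary point $\xi \in \Omega^* \setminus \Omega$, a single positive $L$-harmonic function $K_\xi$ on $\Omega$ (the Martin kernel at $\xi$) and showing that the assignment $\xi \mapsto K_\xi$ extends the identity on $\Omega$ to a homeomorphism $\Omega^* \to \widehat\Omega_{\mathrm{Martin}}$. The three things to verify are: (a) convergence --- for a reference point $x_0 \in \Omega$, the normalized Green functions $G_{\Omega}(\cdot,y)/G_{\Omega}(x_0,y)$ converge, locally uniformly on $\Omega$, to a limit $K_\xi$ as $y \to \xi$ in $\Omega^*$; (b) injectivity --- distinct boundary points give distinct (non-proportional) kernels; and (c) minimality --- each $K_\xi$ is a minimal positive harmonic function, i.e.\ generates an extreme ray of the cone of positive $L$-harmonic functions vanishing weakly on $\partial_\Omega \Omega$.

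For (a), the key input is the scale-invariant boundary Harnack principle of Theorem~\ref{thm2:bHP for u}, applied both to $L$ and (by the symmetry of the hypotheses under $\e \mapsto \e^*$, cf.\ Remark on Assumption~\ref{as2:e_t}) to $L^*$. Fix $\xi$ and a small inner ball around it; for $y,y'$ deep inside that ball and $x,x'$ in a still smaller ball, Theorem~\ref{thm:bHP with GF's} gives that $G_{Y'}(x,y)/G_{Y'}(x',y)$ and $G_{Y'}(x,y')/G_{Y'}(x',y')$ are comparable up to the uniform constant $A_1'$; combined with a Harnack chain argument and the fact that $G_{Y'}$ and $G_\Omega$ differ by a harmonic function, this yields a uniform oscillation estimate for $y \mapsto G_\Omega(x,y)/G_\Omega(x_0,y)$ as $y$ ranges near $\xi$. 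Iterating on a nested sequence of inner balls shrinking to $\xi$ --- here one uses that $B_{\widetilde\Omega}(\xi,r)$ form a neighborhood basis of $\xi$ in $\Omega^*$, which is exactly the point of working with the inner metric --- gives a Cauchy/oscillation-decay estimate and hence convergence to a limit $K_\xi$ that is independent of the approach within $\Omega^*$. That $K_\xi$ is a positive local weak solution of $Lu=0$ on $\Omega$ follows from the Harnack inequality and the standard compactness/closedness of the solution class under locally uniform limits (using the parabolic/elliptic Harnack inequality from Theorem~\ref{thm2:local VD+PI = local HI}), and it vanishes weakly along $\partial_\Omega\Omega \setminus \{\xi\}$ because each $G_\Omega(\cdot,y)$ does.

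For (b), if $\xi \ne \eta$ choose disjoint inner balls $B_{\widetilde\Omega}(\xi,\rho)$, $B_{\widetilde\Omega}(\eta,\rho)$; the boundary Harnack principle comparing $K_\xi$ with $G_\Omega(\cdot, y_\eta)$ for $y_\eta$ near $\eta$ forces $K_\xi$ to be bounded near $\eta$ while $K_\eta$ blows up there, so $K_\xi \not\equiv c K_\eta$. For (c), minimality is the classical consequence of the boundary Harnack principle: if $0 \le h \le K_\xi$ with $h$ a positive harmonic function vanishing weakly on $\partial_\Omega\Omega\setminus\{\xi\}$, then applying Theorem~\ref{thm2:bHP for u} to the pair $(h, K_\xi)$ on balls $B_{\widetilde\Omega}(\xi,r)$ gives $h(x)/h(x') \le A_1 K_\xi(x)/K_\xi(x')$ for all $x,x'$ in $B_\Omega(\xi,r)$, uniformly in $r$; a standard argument (e.g.\ Ancona's, or the ``$3G$''/Carleson-estimate argument) then upgrades this to $h = c K_\xi$. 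Finally, the map $\Omega^* \to \widehat\Omega_{\mathrm{Martin}}$ sending $\xi \mapsto [K_\xi]$ is continuous by (a) (the convergence is uniform in the approach point), injective by (b), and surjective because every Martin boundary point arises as a limit of $G_\Omega(\cdot,y_n)/G_\Omega(x_0,y_n)$ with $y_n \to \partial_\Omega\Omega$ in $\Omega^*$ (pass to a subsequence so that $y_n \to \xi \in \Omega^* \setminus \Omega$ by relative compactness of $\Omega^*$, then invoke (a)); since $\Omega^*$ is compact and the Martin compactification is Hausdorff, this continuous bijection is a homeomorphism.

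The main obstacle is step (a): turning the \emph{scale-invariant} boundary Harnack inequality into actual \emph{convergence} of the normalized Green functions. The delicate points are that the radii $R_\xi$, the eigenvalue quantities $\lambda_{R,\xi}$, and hence the Harnack-type constants must stay under control along the whole nested sequence of balls approaching $\xi$ (this is where the quantitative dependence of the constants in Theorems~\ref{thm:bHP with GF's} and~\ref{thm2:bHP for u} on $D_Y$, $P_Y$, $c_u$, $C_u$, $C_8 R^2$, $A_R$ is essential, together with the lower bound $\lambda_R \ge C/R^2$ from the Remark after Theorem~\ref{thm:capacity estimate}), and that one must carefully relate $G_\Omega$, the Green functions $G_{Y'}$ on inner balls used in Theorem~\ref{thm:bHP with GF's}, and the behavior at the distinguished "pole" point $y$ sitting on $\Omega \cap \partial_\Omega B_{\widetilde\Omega}(\xi,6r)$. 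This is a local-inner-uniformity bookkeeping problem rather than a conceptual one, but it is where the real work lies; the rest follows the now-classical Martin-boundary scheme once the boundary Harnack principle is in hand.
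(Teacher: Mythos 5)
Your proposal takes essentially the same route as the paper: the paper's proof is a one-line deferral to \cite[Theorem 1.1]{ALM03} together with Theorem~\ref{thm2:bHP for u}, and you have reconstructed precisely that Ancona--Aikawa--Lundh--Mizutani Martin-boundary-via-BHP scheme, correctly singling out the upgrade from a scale-invariant oscillation \emph{bound} to oscillation \emph{decay} of normalized Green functions as the technical crux. The injectivity argument in your step (b) is a little loose (the claimed blow-up of $K_\eta$ at its own pole is not automatic and itself needs BHP/Carleson input; the cleaner route is that $K_\xi=cK_\eta$ would force $K_\xi$ to vanish weakly on all of $\partial_\Omega\Omega$ and hence be a potential), but this is a detail at a level the paper does not spell out either.
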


\begin{proof}
The assertion can be proved along the line of \cite[Theorem 1.1]{ALM03} using the boundary Harnack principle of Theorem \ref{thm2:bHP for u}.
\end{proof}

\begin{proof}[Proof of Theorem \ref{thm2:bHP for u}.]
Fix $\xi \in \widetilde\Omega\setminus \Omega$ and $0 < r < R$ as in the theorem.
Following the argument given in \cite[Proof of Theorem 1]{Aik01}, we show that for any weak solution $u$ of $ L u=0$ in $Y'$ with weak Dirichlet boundary condition along $B_{\widetilde\Omega}(\xi,6r) \setminus \Omega$, there exists
a Borel measure $\nu_u$ such that
\begin{align} \label{eq:u is a potential}
 u(x) = \int_{\Omega \cap \partial_{\Omega} B_{\widetilde\Omega}(\xi,6r)}  G_{Y'}(x,y) d\nu_u(y)
\end{align}
for all $x \in \Omega \cap B_{\widetilde\Omega}(\xi,r)$, where $ G_{Y'}$ is the Green function corresponding to the Dirichlet form $( \e^D_{Y'},D( \e^D_{Y'}))$. 
Let $\hat R_u^{\Omega \cap \partial_{\Omega} B_{\widetilde\Omega}(\xi,6r)}$ be the lower regularization of the reduced function
\begin{align*}
 R_u^{\Omega \cap \partial_{\Omega} B_{\widetilde\Omega}(\xi,6r)}(x)  
=  \inf \{ v(x) & : v \textrm{ positive and $L$-superharmonic on } Y', \\
                  & \ \  v \geq u \textrm{ on } \Omega \cap \partial_{\Omega} B_{\widetilde\Omega}(\xi,6r) \} 
\end{align*}
of $u$ on $\Omega \cap \partial_{\Omega} B_{\widetilde\Omega}(\xi,6r)$.
Since $u$ is a positive local weak solution of $Lu=0$ on $Y'$, $u = \hat R_u^{\Omega \cap \partial_{\Omega} B_{\widetilde\Omega}(\xi,6r)}$ quasi-everywhere on $\Omega \cap \partial_{\Omega} B_{\widetilde\Omega}(\xi,6r)$, and $L \hat R_u^{\Omega \cap \partial_{\Omega} B_{\widetilde\Omega}(\xi,6r)} = 0$ on $\Omega \setminus \partial_{\Omega} B_{\widetilde\Omega}(\xi,6r)$. Moreover, $\hat R_u^{\Omega \cap \partial_{\Omega} B_{\widetilde\Omega}(\xi,6r)} = 0$ q.e.~on $B_{\widetilde\Omega}(\xi,6r) \setminus \Omega$ by assumption. Hence $u = \hat R_u^{\Omega \cap \partial_{\Omega} B_{\widetilde\Omega}(\xi,6r)}$ on $\Omega \cap B_{\widetilde\Omega}(\xi,6r)$ by the maximum principle. As in \cite[Proof of Theorem 5.3.5]{AG01}, one can show that there is a measure $\nu_u$ supported on $\Omega \cap \partial_{\Omega} B_{\widetilde\Omega}(\xi,6r)$, so that
\[ \hat R_u^{\Omega \cap \partial_{\Omega} B_{\widetilde\Omega}(\xi,6r)}(x) 
= \int_{\Omega \cap \partial_{\Omega} B_{\widetilde\Omega}(\xi,6r)}  G_{Y'}(x,y) d\nu_u(y),
\quad \forall x \in \Omega \cap B_{\widetilde\Omega}(\xi,r). \]
This proves \eqref{eq:u is a potential}.

By Theorem \ref{thm:bHP with GF's}, there exists a constant $A_1' \in (1,\infty)$ such that for all $x,x' \in B_{\Omega}(\xi,r)$ and all $y,y' \in \Omega \cap \partial_{\Omega} B_{\widetilde\Omega}(\xi,6r)$, we have
 \[ \frac{  G_{Y'}(x,y) }{  G_{Y'}(x',y) } \leq A'_1 \frac{  G_{Y'}(x,y') }{  G_{Y'}(x',y') }. \]
For any (fixed) $y' \in \Omega \cap \partial_{\Omega} B_{\widetilde\Omega}(\xi,6r)$, we find that
\begin{align*}
\frac{1}{A'_1} u(x)  
& \leq  \frac{  G_{Y'}(x,y') }{  G_{Y'}(x',y') }   \int_{\Omega \cap \partial_{\Omega} B_{\widetilde\Omega}(\xi,6r)}   G_{Y'}(x',y) d\nu_u(y) \\
& =   \frac{  G_{Y'}(x,y') }{  G_{Y'}(x',y') }  u(x')
   \leq  A'_1 u(x).
\end{align*}
We get a similar inequality for $v$. Thus, for all $x, x' \in B_{\Omega}(\xi,r)$,
\begin{equation} \label{eq:bHP with u and G}
\frac{1}{A'_1} \frac{ u(x) }{ u(x') } 
    \leq \frac{  G_{Y'}(x,y') }{ G_{Y'}(x',y') }   \leq   A'_1 \frac{ v(x) }{ v(x') }.
\end{equation}
\end{proof}

\subsection{Proof of Theorem \ref{thm:bHP with GF's}} \label{ssec:bHP for L}

We follow closely \cite{Aik01} and \cite{GyryaSC}.
Notice that the estimates for the Green function $ G$ in Section \ref{ssec:GF's estimates} and the results in this section also hold for the adjoint $ G^*$.
Let $\Omega$, $Y$ be as above and fix some $\xi \in Y \cap (\widetilde\Omega \setminus \Omega)$ with $R_{\xi} > 0$.

\begin{definition}
For $\eta \in (0,1)$ and any open set $U \subset X$, define the \emph{capacity width} $w_{\eta}(U)$ by
 \[ w_{\eta}(U) = \inf \left\{ r>0 : \forall x \in U, \frac{ {\mathrm{Cap}}_{B(x,2r)} \big( \overline{B(x,r)} \setminus U \big) }{ {\mathrm{Cap}}_{B(x,2r)} \big( \overline{B(x,r)} \big) } \geq \eta \right\} . \]
\end{definition}

Note that $w_{\eta}(U)$ is a decreasing function of $\eta \in (0,1)$ and an increasing function of the set $U$.

\begin{lemma} \label{lem:4.12} 
There are constants $A_7 \in (0,\infty)$ and $\eta \in (0,1/3]$ depending only on $D_Y$, $P_Y$, $c_u$, $C_u$,$C_0$-$C_7$, and upper bounds on $C_8 R^2$ and $A_{\frac{R}{2}}$, such that for all $0 < r < R \leq 2R_{\xi}$, 
 \[ w_{\eta} \big( \{ y \in B_{\widetilde\Omega}(\xi,R): d(y, \widetilde\Omega \setminus \Omega) < r \} \big)  \leq A_7r. \]
\end{lemma}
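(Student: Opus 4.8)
The statement is a capacity-width estimate: we must show that the ``bad set'' $U_{R,r} := \{ y \in B_{\widetilde\Omega}(\xi,R): d(y, \widetilde\Omega \setminus \Omega) < r \}$ has capacity width at most $A_7 r$ for an appropriate choice of $\eta$ and $A_7$. The natural approach is to show directly that for \emph{every} $y \in U_{R,r}$, a definite fraction of the capacity of $\overline{B(p(y), A_7 r)}$ (computed inside $B(p(y), 2A_7 r)$) is already accounted for by the complement of $\Omega$ — indeed, by a ball centered at a genuine boundary point and lying outside $\Omega$. Once that is in place, $A_7 r$ is an admissible radius in the infimum defining $w_\eta$, so $w_\eta(U_{R,r}) \le A_7 r$.

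First I would fix $y \in U_{R,r}$ and choose a point $\zeta \in \widetilde\Omega \setminus \Omega$ with $d(y,\zeta) < r$ (or rather its projection $p(\zeta)$, using that $d(y,\widetilde\Omega\setminus\Omega)<r$). The key geometric input is inner uniformity near $\xi$: since $R \le 2R_\xi$, condition (iv) in the definition of $R_\xi$ guarantees that points in $B_{\widetilde\Omega}(\xi, 12 R_\xi/c_u)$ are joined by $(c_u,C_u)$-inner uniform curves, and Lemma \ref{lem:x_r} produces, for the inner ball of radius comparable to $r$ around $y$, a point $y'$ with $d_\Omega(y,y') \asymp r$ and $d(y', X\setminus\Omega) \gtrsim c_u r$. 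This forces a Euclidean-type ball $B(p(\zeta), c r)$ (with $c$ depending on $c_u$) to be contained in $X \setminus \Omega$ and to sit inside $\overline{B(p(y), A_7 r)}$ for $A_7 := 2/c_u+1$ chosen as in the statement. Thus $\overline{B(p(y),A_7r)} \setminus U_{R,r} \supset \overline{B(p(y),A_7 r)} \setminus \Omega \supset B(p(\zeta), c r)$.

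Next I would invoke the capacity estimate of Theorem \ref{thm:capacity estimate} together with volume doubling (VD) on $Y$ — which applies because $B(p(\xi), A_8 R_\xi) \subset Y$ and $A_8$ was chosen large enough to contain all the balls appearing here. The estimate \eqref{eq:capacity estimate} gives two-sided control of $\mathrm{Cap}_{B(p(y),2A_7 r)}(\overline{B(p(y),A_7r)})$ and of $\mathrm{Cap}_{B(p(\zeta),2cr)}(B(p(\zeta),cr))$ in terms of the integrals $\int \frac{s}{V(\cdot,s)}\,ds$; by volume doubling and the comparability of all the radii involved (all within a bounded multiplicative factor of $r$), these two capacities are comparable up to a constant depending only on $D_Y$, $c_u$, $C_u$, and the constant $A_R$ controlling the non-symmetric and killing terms. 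Monotonicity of capacity (larger ambient set, larger set) then yields
\[
\frac{\mathrm{Cap}_{B(p(y),2A_7 r)}\big(\overline{B(p(y),A_7 r)}\setminus U_{R,r}\big)}{\mathrm{Cap}_{B(p(y),2A_7 r)}\big(\overline{B(p(y),A_7 r)}\big)} \ge \eta
\]
for a constant $\eta \in (0,1/3]$ of the stated dependence. Since $y$ was arbitrary, $A_7 r$ qualifies in the infimum defining $w_\eta(U_{R,r})$, giving the claim.

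The main obstacle is the geometric step: verifying that a full ball $B(p(\zeta),cr)$ in the \emph{ambient} metric $d$ actually lies in the complement of $\Omega$ and inside the right ambient ball. A priori $\zeta$ is only an inner-boundary point, and inner balls can be much larger than ambient balls; one must use Lemma \ref{lem:metrics are comparable} (at scale $\asymp r$, whose hypothesis $B(p(\cdot),4r)\subset Y$ holds by the choice of $A_8$) to pass between the inner ball $B_{\widetilde\Omega}(y,\cdot)$ around $y$ and the connected component of the ambient ball, and then Lemma \ref{lem:x_r} to extract the interior point certifying that $\delta_\Omega$ is bounded below on a definite ambient ball near $p(\zeta)$. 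Getting all the numerical constants $A_7$, $A_8$ to line up — in particular checking that $12R/c_u \le 12 R_\xi \cdot (2/c_u) $ keeps us inside the region where (iv) applies, and that the relevant doubled balls stay inside $Y$ — is the bookkeeping that this proof turns on; the analytic content is entirely supplied by Theorems \ref{thm:capacity estimate} and the volume doubling hypothesis. This mirrors \cite[Lemma 4.12]{GyryaSC}, with the inner metric replacing the Euclidean one and Lemma \ref{lem:metrics are comparable} bridging the two.
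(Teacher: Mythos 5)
There is a genuine gap in the geometric step. You assert that ``a Euclidean-type ball $B(p(\zeta), c r)$ ... [is] contained in $X \setminus \Omega$'' and then use the inclusion $\overline{B(p(y),A_7r)} \setminus U_{R,r} \supset \overline{B(p(y),A_7 r)} \setminus \Omega \supset B(p(\zeta), c r)$. This cannot be right: inner uniformity is a condition on the geometry of $\Omega$ seen from the \emph{inside}; it says nothing about the thickness of the complement. A slit plane such as $\R^2 \setminus ([0,\infty)\times\{0\})$ is inner uniform, yet its complement has empty interior, so no ball $B(p(\zeta),cr)$ can lie in $X\setminus\Omega$. The sentence is also internally inconsistent with your concluding remark about ``certifying that $\delta_{\Omega}$ is bounded below on a definite ambient ball'' — a ball on which $\delta_{\Omega}$ is bounded below sits well inside $\Omega$, not in its complement.

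The paper's proof takes exactly the opposite route, and so should you: inner uniformity produces a point $z\in\Omega$ with $d_{\Omega}(y,z) = 2r/c_u$ and $d(z,\widetilde\Omega\setminus\Omega)\geq 2r$ (the point $y'$ you extract via Lemma~\ref{lem:x_r} is essentially this $z$). Then every point of $B(z,r)$ is at distance $>r$ from $\widetilde\Omega\setminus\Omega$, so $B(z,r)$ avoids $Y_r$ because it is \emph{deep inside} $\Omega$, not because it is outside $\Omega$; and $B(z,r)\subset B(y,(2/c_u+1)r) = B(y,A_7 r)$. Your claimed inclusion should therefore be $\overline{B(y,A_7 r)}\setminus Y_r \supset B(z,r)$ directly, without passing through $\overline{B(y,A_7r)}\setminus\Omega$. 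With this correction the rest of your argument — applying Theorem~\ref{thm:capacity estimate} to compare $\mathrm{Cap}_{B(y,2A_7 r)}(\overline{B(y,A_7 r)})$ with $\mathrm{Cap}_{B(z,3A_7 r)}(\overline{B(z,r)})$ and invoking volume doubling to get a uniform lower bound $\eta$, plus the bookkeeping showing $B(z,6A_7 r)\subset Y$ via $A_8$ — matches the paper and is sound. Lemma~\ref{lem:metrics are comparable} is not needed for this lemma.
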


\begin{proof}
We follow \cite[Lemma 4.12]{GyryaSC}.
Let $Y_r = \{ y \in B_{\widetilde\Omega}(\xi,R): d(y, \widetilde\Omega \setminus \Omega) < r \}$ and $y \in Y_r$. Since $r < c_u \mathrm{diam}_{\Omega}(\Omega) / 12$, there exists a point $x \in \Omega$ such that $d_{\Omega}(x,y) = 4r / c_u$. By assumption, there is an inner uniform curve connecting $y$ to $x$ in $\Omega$. Let $z \in \Omega \cap \partial_{\Omega} B_{\Omega}(y,2r/c_u)$ be a point on this curve and note that $d_{\Omega}(y,z) = 2r/c_u \leq d_{\Omega}(x,y) - d_{\Omega}(y,z) \leq d_{\Omega}(x,z)$. Hence,
 \[ d_{\Omega}(z, \widetilde\Omega \setminus \Omega)  \geq  c_u \min\{d_{\Omega}(y,z),d_{\Omega}(z,x)\}  \geq  2r. \] 
So for any $y \in Y_r$ there exists a point $z \in \Omega \cap \partial_{\Omega} B_{\Omega}(y,2r/c_u)$ with $d(z,\widetilde\Omega \setminus \Omega) \geq 2r$. Thus, $B(z,r) \subset B(y,A_7 r) \setminus Y_r$ if $A_7 = 2/c_u + 1$. The capacity of $B(y,A_7 r) \setminus Y_r$ in $B(y,2A_7 r)$ is larger than the capacity of $B(z,r)$ in $B(y,2A_7 r)$, which is larger than the capacity of $B(z,r)$ in $B(z,3A_7 r)$. Thus, by Theorem \ref{thm:capacity estimate}, 
\begin{align*}  \frac{ {\mathrm{Cap}}_{B(y,2A_7r)} \big( \overline{B(y,A_7r)} \setminus Y_r \big) }{ {\mathrm{Cap}}_{B(y,2A_7r)} \big( \overline{B(y,A_7r)} \big) } 
& \geq \frac{ {\mathrm{Cap}}_{B(z,3A_7r)} \big( \overline{B(z,r)} \big) }{ {\mathrm{Cap}}_{B(y,2A_7r)} \big( \overline{B(y,A_7r)} \big) } \\
& \geq a \frac{A_7 r}{V(y,2A_7r)} \frac{1}{A} \frac{V(z,r)}{3A_7 r} 
\geq \frac{1}{3}.
\end{align*}
This shows that $w_{\eta}(Y_r) \leq \mathrm{const} \cdot A_7 r$ for some $\eta \in (0,1/3]$. Notice that the hypotheses of Theorem \ref{thm:capacity estimate} are satisfied because $B(z,6A_7r) \subset B(\xi,7A_7 R) \subset B(\xi,A_8 R_{\xi}) \subset Y$ by assumption.
\end{proof}

Write $w(U) := w_{\frac{1}{3}}(U)$ for the capacity width of an open set $U \subset \Omega$. 

The following lemma relates the capacity width to the $L$-harmonic measure $\omega$. A similar inequality holds for the $L^*$-harmonic measure $\omega^*$. We write $f \asymp g$ to indicate that $c g \leq f \leq C g$, for some constants $c,C \in (0,\infty)$ that depend only on $D_Y$, $P_Y$, $c_u$, $C_u$, $C_0$-$C_7$, and upper bounds on $C_8 R^2$ and $A_R$.

\begin{lemma} \label{lem:4.13}
There is a constant $a_1(D_Y, P_Y, C_0-C_7, C_8 R^2)$ such that for any non-empty open set $U \subset X$ and any ball $B(x,3r) \subset Y$ with $x \in U$, $0 < r < R$, we have
 \[ \omega_{U \cap B(x,r)}(x,U \cap (\overline{B(x,r)} \setminus B(x,r))) \leq \exp(2 - a_1 r / w(U)). \]
\end{lemma}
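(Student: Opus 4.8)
The plan is to follow \cite[Lemma~4.13]{GyryaSC}: first prove a single-scale version of the estimate at scale comparable to $w(U)$, and then chain it across $\asymp r/w(U)$ concentric layers. Write $w:=w(U)=w_{1/3}(U)$ and set $h(z):=\omega_{U\cap B(x,r)}\big(z,\,U\cap(\overline{B(x,r)}\setminus B(x,r))\big)$, a non-negative bounded ($h\le1$) $L$-harmonic function on $U\cap B(x,r)$ equal to $0$ q.e.\ on $\partial U\cap B(x,r)$ and to $1$ on $U\cap(\overline{B(x,r)}\setminus B(x,r))$. One may assume $0<w\le r/4$: if $w=0$ then $h\equiv0$, and if $w>r/4$ then $r/w<4$, so after shrinking $a_1$ to at most $1/2$ the bound $\exp(2-a_1r/w)\ge1\ge h(x)$ is trivial. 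Put $\rho:=2w$; then $\rho>w$, so the defining inequality of the capacity width holds at every point of $U$ at scale $\rho$, and $B(y,4\rho)\subset B(x,3r)\subset Y$ for every $y\in B(x,r)$.

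\emph{Single-scale estimate.} I claim there is $\epsilon_0\in(0,1)$, depending only on $D_Y$, $P_Y$, $C_0$-$C_7$ and an upper bound on $C_8R^2$, such that for every $y\in U$ with $B(y,4\rho)\subset Y$,
\[ \omega_{U\cap B(y,2\rho)}\big(y,\,U\cap(\overline{B(y,2\rho)}\setminus B(y,2\rho))\big)\ \le\ 1-\epsilon_0. \]
To see this, let $A:=\overline{B(y,\rho)}\setminus U$ and let $\phi:=e_{A,0}=G^D_{B(y,2\rho)}\nu_A$ be the equilibrium potential of $A$ in $B(y,2\rho)$ from the Capacity subsection, so $0\le\phi\le1$, $\phi$ is $L$-harmonic on $B(y,2\rho)\setminus\overline{A}$, $\phi=0$ q.e.\ on $\partial B(y,2\rho)$, and $\nu_A\big(B(y,2\rho)\big)={\mathrm{Cap}}_{B(y,2\rho)}(A)$. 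Since $\overline{A}\subset X\setminus U$, the function $1-h-\phi$ is a non-negative $L$-supersolution on $U\cap B(y,2\rho)$ --- it is the sum of the $L$-harmonic function $-h-\phi$ and the constant $1$, which is an $L$-supersolution because the semigroup is sub-Markovian (equivalently, by the sign of the killing part, e.g.\ $c-\mathrm{div}\,d\ge0$ in the Euclidean case) --- and it is non-negative on $\partial(U\cap B(y,2\rho))$ (on $\partial U$: $h=0$, $\phi\le1$; on $U\cap\partial B(y,2\rho)$: $h=1$, $\phi=0$). The maximum principle of Section~\ref{sec:BHP} then gives $h(y)\le1-\phi(y)$. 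Finally, by Lemma~\ref{lem:4.8}(ii) (with $\theta=3/4$) and volume doubling, $G^D_{B(y,2\rho)}(y,\zeta)\ge c_1\,\rho^2/V(y,\rho)$ for all $\zeta\in\overline{B(y,\rho)}$, while the capacity-width inequality at scale $\rho$, Theorem~\ref{thm:capacity estimate} and volume doubling yield ${\mathrm{Cap}}_{B(y,2\rho)}(A)\ge\tfrac13\,{\mathrm{Cap}}_{B(y,2\rho)}\big(\overline{B(y,\rho)}\big)\ge c_2\,V(y,\rho)/\rho^2$; hence $\phi(y)=\int G^D_{B(y,2\rho)}(y,\zeta)\,d\nu_A(\zeta)\ge c_1c_2=:\epsilon_0$, and $\epsilon_0<1$ since $\phi\le1$.

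\emph{Chaining.} For $0\le t<r$ put $M(t):=\sup\{h(z):z\in U\cap\overline{B(x,t)}\}\le1$. If $z\in U$ with $d(z,x)\le t\le r-2\rho$, then $B(z,2\rho)\subset B(x,r)$, and comparing $h$ on $U\cap B(z,2\rho)$ with $M(t+2\rho)\cdot\omega_{U\cap B(z,2\rho)}\big(\cdot,\,U\cap(\overline{B(z,2\rho)}\setminus B(z,2\rho))\big)$ on the boundary --- where $h=0$ on $\partial U$ and $h\le M(t+2\rho)$ on $U\cap\partial B(z,2\rho)$ --- together with the single-scale estimate at $y=z$ gives $h(z)\le(1-\epsilon_0)M(t+2\rho)$. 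Taking the supremum over such $z$ yields $M(t)\le(1-\epsilon_0)M(t+2\rho)$ for all $0\le t\le r-2\rho$. Iterating from $t=0$ (every step is legitimate as long as $2i\rho\le r$) gives $h(x)=M(0)\le(1-\epsilon_0)^{j}M(2j\rho)\le(1-\epsilon_0)^{j}$ with $j=\lfloor r/(2\rho)\rfloor\ge r/(4w)-1$. Hence $h(x)\le(1-\epsilon_0)^{-1}\exp\!\big(-\tfrac14\log\tfrac{1}{1-\epsilon_0}\cdot\tfrac{r}{w}\big)$, which is of the asserted form $\exp(2-a_1r/w)$ on choosing, say, $a_1:=\min\{\tfrac14\log\tfrac1{1-\epsilon_0},\,\tfrac12\}$ (this also covers the range $r/w\le4$ set aside above). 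The constant $a_1$ depends only on $\epsilon_0$, hence only on the listed quantities; note that inner uniformity is never used, consistent with the statement.

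\emph{Main obstacle.} The only genuinely delicate points are the comparison $h(y)\le1-\phi(y)$ in the non-symmetric setting --- handled by using the $L$-equilibrium potential $e_{A,0}=G^D_{B(y,2\rho)}\nu_A$ supplied by the Capacity subsection (together with the fact, recorded at the start of Section~\ref{sec:BHP}, that local weak solutions of $Lu=0$ are harmonic for the associated Markov process and obey the maximum principle) --- and the fact that the capacity-width inequality is available only at scales strictly larger than $w$, which forces the choice $\rho=2w$ and the accompanying check, in the non-trivial range $w\le r/4$, that the auxiliary balls $B(y,4\rho)$ still lie in $Y$. Everything else --- the Green-function lower bound and the two-sided capacity estimate for concentric balls --- is quoted directly from Lemma~\ref{lem:4.8}(ii) and Theorem~\ref{thm:capacity estimate}.
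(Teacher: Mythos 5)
Your argument follows the same two-step strategy as the paper: establish a single-scale decay of harmonic measure at a scale comparable to $w(U)$, then chain across $\asymp r/w(U)$ concentric layers. The chaining is organized slightly differently (your sup function $M(t)$ versus the paper's induction over shells $\partial B(x,r-2js)$), and your single-scale estimate is proved a bit more directly: you pair the Green-function lower bound of Lemma~\ref{lem:4.8}(ii) at the center $y$ against the capacity lower bound of Theorem~\ref{thm:capacity estimate} to get $\phi(y)\geq\epsilon_0$, whereas the paper compares the equilibrium measures $\nu_E$ and $\nu_F$ on the sphere $\partial B(y,3s/2)$ and then invokes a minimum principle. Both routes give the same conclusion, and yours is arguably cleaner.

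There is one genuine gap. You set $\rho:=2w$ and assert that ``$\rho>w$, so the defining inequality of the capacity width holds at every point of $U$ at scale $\rho$.'' This does not follow from the definition of $w_\eta(U)$ as an \emph{infimum}: the set $S_\eta=\{r>0:\forall y\in U,\ \mathrm{Cap}_{B(y,2r)}(\overline{B(y,r)}\setminus U)/\mathrm{Cap}_{B(y,2r)}(\overline{B(y,r)})\geq\eta\}$ is not obviously upward closed, so $\inf S_\eta=w$ does not yield $2w\in S_\eta$. This is precisely the point where the capacity lower bound $\mathrm{Cap}_{B(y,2\rho)}(A)\geq\frac13\mathrm{Cap}_{B(y,2\rho)}(\overline{B(y,\rho)})$ is invoked, and without it the lower bound $\phi(y)\geq c_1c_2$ is not justified. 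The paper avoids this by choosing $s$ \emph{from the set} $S_\eta$ with $w\leq s<w+\kappa$ for a small $\kappa$, which is legitimate by the definition of the infimum. The fix for your write-up is immediate and costs only constants: choose $\rho\in S_{1/3}\cap[w,2w)$ (nonempty since $w=\inf S_{1/3}$); then $B(y,4\rho)\subset B(x,r+8w)\subset B(x,3r)\subset Y$ still holds because you already reduced to $w\leq r/4$, and the number of chaining steps changes by at most a factor of $2$. (Alternatively, one can show via volume doubling and Theorem~\ref{thm:capacity estimate} that the capacity ratio at scale $2\rho'$ is bounded below by $c\cdot\eta$ whenever $\rho'\in S_\eta$, but that requires carrying a smaller $\eta$.) A minor stylistic point: your $h$ is introduced as the global harmonic measure $\omega_{U\cap B(x,r)}$, but in the single-scale step you silently use it for the local harmonic measure $\omega_{U\cap B(y,2\rho)}$; the boundary values $h=1$ on $U\cap\partial B(y,2\rho)$ only hold for the latter.
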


\begin{proof} We follow \cite[Lemma 1]{Aik01} and \cite[Lemma 4.13]{GyryaSC}. We may assume that $r/w(U) > 2$. For any $\kappa \in (0,1)$, we can pick $w(U) \leq s < w(U) + \kappa$ so that
 \[ \frac{ {\mathrm{Cap}}_{B(y,2s)}(\overline{B(y,s)} \setminus U) }{ {\mathrm{Cap}}_{B(y,2s)}(\overline{B(y,s)})} \geq \eta \quad \forall y \in U. \]
Consider a point $y \in U$ such that $B(y,3s) \subset Y$ and let $E = \overline{B(y,s)} \setminus U$. Let $\nu_E$ be the equilibrium measure of $E$ in $B = B(y,2s)$. We claim that there exists $A_2 > 0$ such that
\begin{equation} \label{eqh:biene}
  G_B \nu_E \geq A_2 \eta  \quad \textrm{ on } B(y,s).
\end{equation}
Let $F = \overline{B(y,s)}$ and $\nu_F$ be the equilibrium measure of $F$ in $B$. Then, by the Harnack inequality, for any $z$ with $d(y,z) = 3s/2$, we have
 \[  G_B(z,\zeta) \asymp  G_B(z,y)  \quad \forall \zeta \in B(y,s). \]
Hence,
 \[  G_B \nu_F(z) = \int_F  G_B(z,\zeta) \nu_F(d\zeta) \asymp  G_B(z,y) \nu_F(F) \]
and
 \[  G_B \nu_E(z) = \int_E  G_B(z,\zeta) \nu_E(d\zeta) \asymp  G_B(z,y) \nu_E(E). \]
Moreover, since $\nu_F(F) = {\mathrm{Cap}}_B(F)$, the two-sided inequality \eqref{eq:capacity estimate}
and Lemma \ref{lem:4.8} yield that $ G_B \nu_F(z) \simeq 1$. Hence, by definition of $s$, for any $z \in \overline{B(y,3s/2)} \setminus B(y,3s/2)$,
 \[  G_B \nu_E(z)  
\asymp  \frac{  G_B \nu_E(z) }{  G_B \nu_F(z) }
\asymp  \frac{ \nu_E(E) }{ \nu_F(F) } 
\asymp  \frac{ {\mathrm{Cap}}_B(E) }{ {\mathrm{Cap}}_B(F) }   \geq \eta. \]
This proves \eqref{eqh:biene}. 

Now, fix $x \in U$ such that $B(x,3r) \subset Y$. For simplicity, write 
 \[ \omega(\cdot) = \omega_{U \cap B(x,r)} (\cdot, U \cap (\overline{B(x,r)} \setminus B(x,r))). \]
Let $k$ be the integer such that $2kw(U) < r < 2(k+1)w(U)$, and pick $s > w(U)$ so close to $w(U)$ that $2ks < r$. We claim that
\begin{equation} \label{eqh:hummel}
 \sup_{U \cap B(x,r - 2js)} \{ \omega \}  \leq (1 - A_2 \eta)^j
\end{equation}
for $j = 0,1,\ldots,k$ with $A_2, \eta$ as in \eqref{eqh:biene}. Note that for $j=k$, \eqref{eqh:hummel} yields the inequality stated in this Lemma:
\[ \omega(x) \leq (1 - A_2 \eta)^k \leq \exp \left( \log( (1-A_2\eta)^{\frac{r}{2w(U)}} ) \right) 
 \leq e^2 \exp(- a_1 r/w(U)), \]
with $a_1 = - (\log( 1-A_2\eta)) / 2 $.

Inequality \eqref{eqh:hummel} is proved by induction, starting with the trivial case $j=0$. Assume that \eqref{eqh:hummel} holds for $j-1$. By the maximum principle, it suffices to prove
\begin{equation} \label{eqh:wespe}
 \sup_{U \cap (\overline{B(x,r - 2js)} \setminus B(x,r - 2js))} \{ \omega \}  \leq (1 - A_2 \eta)^j.
\end{equation}

Let $y \in U \cap (\overline{B(x,r - 2js)} \setminus B(x,r - 2js))$. Then $B(y,2s) \subset B(x,r-2(j-1)s)$ so that the induction hypothesis implies that 
 \[ \omega \leq (1-A_2\eta)^{j-1} \quad \textrm{ on } U \cap B(y,2s). \] 
Since $\omega$ vanishes (quasi-everywhere) on $(\overline{U} \setminus U) \cap B(x,r) \supset (\overline{U} \setminus U) \cap B(y,2s)$, the maximum principle implies that
\begin{align*}
\omega(b)
& =     \int_{\overline{(U \cap B(y,2s))} \setminus (U \cap B(y,2s))} \omega(a) \omega_{U \cap B(y,2s)}(b, da) \\
& \leq  (1-A_2\eta)^{j-1} \omega_{U \cap B(y,2s)}(b, U \cap (\overline{B(y,2s)} \setminus B(y,2s)))
\end{align*}
for any $b \in V \cap B(y,2s)$.
To estimate 
 \[ u = \omega_{U \cap B(y,2s)}(\cdot, U \cap (\overline{B(y,2s)} \setminus B(y,2s))), \]
on $U \cap B(y,2s)$, we compare it to
 \[ v = 1 -  G_{B(y,2s)} \nu_E, \]
where, as above, $\nu_E$ denotes the equilibrium measure of $E = \overline{B(y,2s)} \setminus U$ in $B(y,2s)$. Both functions are $ L$-harmonic in $U \cap B(y,2s)$, and it holds $u \leq v$ on $\overline{(U \cap B(y,2s))} \setminus (U \cap B(y,2s))$ quasi-everywhere (in the limit sense). By \eqref{eqh:biene}, this implies
 \[ u = \omega_{U \cap B(y,2s)}(\cdot, U \cap (\overline{B(y,2s)} \setminus B(y,2s)))  \leq  v  \leq  1 - A_2 \eta \]
on $U \cap B(y,2s)$. Hence, 
\[ \omega \leq (1 - A_2 \eta)^j  \quad \textrm{ on } U \cap B(y,2s). \]
Since this holds for any $y \in U \cap (\overline{B(x,r-2js)} \setminus B(x,r-2js))$, \eqref{eqh:wespe} is proved.
\end{proof}

\begin{lemma} \label{lem:4.14}
There exist constants $A_2, A_3 \in (0,\infty)$ depending only on $D_Y$, $P_Y$, $C_0$-$C_7$, $c_u$, $C_u$, and upper bounds on $C_8 R^2$ and $A_R$, such that for any $0 < r < R \leq R_{\xi}$
and any $x \in B_{\Omega}(\xi,r)$, we have
 \[  \omega \big( x, \Omega \cap \partial_{\Omega} B_{\widetilde\Omega}(\xi,2r), B_{\widetilde\Omega}(\xi,2r) \big) 
     \leq  A_2 \frac{ V(\xi,r) }{ r^2 }  G_{B_{\widetilde\Omega}(\xi,C_{\Omega} A_3 r)} (x,\xi_{16r}). \]
Here $\xi_{16r}$ is any point in $\Omega$ with $d_{\Omega}(\xi,\xi_{16r}) = 4r$ and 
\[  d(\xi_{16r}, X \setminus \Omega) = d(\xi_{16r}, X \setminus Y') \geq 2 c_u r. \]
A similar estimate holds for the $L^*$-harmonic measure $\omega^*$.
\end{lemma}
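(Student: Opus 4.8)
The plan is to carry Aikawa's argument for \cite[Lemma~2]{Aik01} (see also \cite[Lemma~4.14]{GyryaSC}) over to the inner metric, the one genuinely new ingredient being that Lemma~\ref{lem:metrics are comparable} must be invoked whenever Lemma~\ref{lem:4.13}, which is phrased in terms of ambient balls, is applied inside an inner ball. Write $S=\Omega\cap\partial_\Omega B_{\widetilde\Omega}(\xi,2r)$, $u(x)=\omega_{B_{\widetilde\Omega}(\xi,2r)}(x,S)$ and $h(x)=G_{B_{\widetilde\Omega}(\xi,C_\Omega A_3r)}(x,\xi_{16r})$. Both functions are non-negative, satisfy $Lw=0$ weakly in $B_\Omega(\xi,2r)$ (for $h$ because $d_\Omega(\xi,\xi_{16r})=4r$, so the pole lies outside $\overline{B_{\widetilde\Omega}(\xi,2r)}$) and vanish weakly along $B_{\widetilde\Omega}(\xi,2r)\setminus\Omega$, and $u\le 1$. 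We must produce $A_2$ with $u\le A_2\tfrac{V(\xi,r)}{r^{2}}h$ on $B_\Omega(\xi,r)$. Fix a small $\epsilon\in(0,1)$, to be chosen at the end in terms of the structural constants, and put $\Gamma=\{y\in B_\Omega(\xi,2r):\delta_\Omega(y)<\epsilon r\}$.

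\textbf{Thick points.} Suppose $x\in B_\Omega(\xi,2r)$ with $\delta_\Omega(x)\ge\epsilon r$. Then $d_\Omega(x,\xi_{16r})\le 6r$, $\delta_\Omega(\xi_{16r})\ge 2c_ur$, and by condition~(iv) in the definition of $R_\xi$ the points $x$ and $\xi_{16r}$ are joined by a $(c_u,C_u)$-inner uniform curve in $\Omega$. This puts us in the setting of Lemma~\ref{lem:4.9}(ii) applied with $U=\Omega$, center $\xi$, inner radius $R'\asymp r$ and $\delta$ close to the threshold $1/3$; the values of $A_3$, $A_8$ and the four defining properties of $R_\xi$ are exactly what is needed to guarantee $B_\Omega(\xi,2R')\subset B(p(\xi),2R')\subset Y$ and $B_\Omega(\xi,R')\subset B_{\widetilde\Omega}(\xi,C_\Omega A_3r)$. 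Monotonicity of Green functions, Lemma~\ref{lem:4.9}(ii) and volume doubling then give $h(x)\ge c_0\,r^{2}/V(\xi,r)$, whence $u(x)\le 1\le c_0^{-1}\tfrac{V(\xi,r)}{r^{2}}h(x)$ (here $c_0$ depends on $\epsilon$). The same holds for the adjoint with $G^{*}$ in place of $G$.

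\textbf{Thin points.} Let now $x\in B_\Omega(\xi,r)$ with $\delta_\Omega(x)<\epsilon r$, so $x\in\Gamma$. Since $\Gamma\subset\{y\in B_{\widetilde\Omega}(\xi,R):d(y,\widetilde\Omega\setminus\Omega)<\epsilon r\}$ and $\epsilon r<R\le 2R_\xi$, Lemma~\ref{lem:4.12} and the monotonicity of the capacity width give $w(\Gamma)=w_{1/3}(\Gamma)\le A_7\epsilon r$. Apply the maximum principle to $u$ on $\Gamma$. Its boundary inside $\widetilde\Omega$ splits into the part in $\widetilde\Omega\setminus\Omega$, where $u=0$ q.e.; the ``inner wall'' $W=\{\delta_\Omega=\epsilon r\}\cap\overline{B_\Omega(\xi,2r)}$, where the thick-point estimate gives $u\le c_0^{-1}\tfrac{V(\xi,r)}{r^{2}}h$ and where, $h$ being non-negative on the rest of $\partial_\Omega\Gamma$, one has $\int_W u\,d\omega_\Gamma(x,\cdot)\le c_0^{-1}\tfrac{V(\xi,r)}{r^{2}}\int_W h\,d\omega_\Gamma(x,\cdot)\le c_0^{-1}\tfrac{V(\xi,r)}{r^{2}}h(x)$; and the thin part $S_n=S\cap\{\delta_\Omega<\epsilon r\}$ of the exit sphere, where $u=1$. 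Hence
\[
 u(x)\ \le\ c_0^{-1}\,\tfrac{V(\xi,r)}{r^{2}}\,h(x)\ +\ \omega_\Gamma(x,S_n).
\]
Because $d_\Omega(x,S_n)>r$, Lemma~\ref{lem:metrics are comparable} (center $p(x)$, radius $r/C_\Omega$) shows that $S_n$ lies outside the component of $B(p(x),r/C_\Omega)\cap\Omega$ containing $x$; thus $\omega_\Gamma(x,S_n)$ does not exceed the $\Gamma$-harmonic measure of $\Gamma\cap\partial B(p(x),r/C_\Omega)$ at $x$, which Lemma~\ref{lem:4.13} bounds by $\exp\bigl(2-a_1/(C_\Omega A_7\epsilon)\bigr)$. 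What remains is to turn this residual term into a quantity comparable to $\tfrac{V(\xi,r)}{r^{2}}h(x)$; this is needed because $h$ itself vanishes along $\partial_\Omega\Omega$, so a bound by a small absolute constant is not enough. Following \cite[Lemma~4.14]{GyryaSC}, one achieves it by iterating the comparison above over the dyadic collars $\{2^{-k-1}\epsilon r\le\delta_\Omega<2^{-k}\epsilon r\}$, $k\ge 0$, in the style of the proof of Lemma~\ref{lem:4.13}: at scale $2^{-k}\epsilon r$ the collar again has capacity width $\le A_7 2^{-k}\epsilon r$, and the estimate $u\le\mathrm{const}\cdot\tfrac{V(\xi,r)}{r^{2}}h$ propagates inward, from the thick region through $\Gamma$ and down to $x$. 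Choosing $\epsilon$ small enough, depending on $D_Y$, $P_Y$, $c_u$, $C_u$, $C_0$–$C_7$ and upper bounds on $C_8R^{2}$ and $A_R$, fixes $A_2$. The argument for $\omega^{*}$ is identical.

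\textbf{The main obstacle} is precisely this last step. On $S_n$ one has $u\equiv 1$ while $h$ is tiny, so a crude maximum-principle comparison of $u$ with $A_2\tfrac{V(\xi,r)}{r^{2}}h$ on $\partial B_\Omega(\xi,2r)$ fails outright. What rescues the argument is that Lemma~\ref{lem:4.12} makes $\Omega$ ``uniformly fat'' at every scale near $\xi$, which forces the harmonic measure of $S_n$ to decay, relative to $h$, at least as fast as $h$ itself decays toward the boundary; extracting a clean pointwise inequality from this is exactly the content of the dyadic iteration. Bookkeeping throughout which inner balls and which ambient balls stay inside $Y$ — via Lemma~\ref{lem:metrics are comparable} and volume doubling — is what dictates the choice of the constants $A_0$, $A_3$, $A_7$, $A_8$ and the four conditions defining $R_\xi$.
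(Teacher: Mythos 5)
Your setup (thick/thin split, use of Lemma \ref{lem:4.12}, and the passage through Lemma \ref{lem:metrics are comparable} before invoking Lemma \ref{lem:4.13}) is consistent with the paper's strategy, and you correctly identify the crux: on the thin part of the exit sphere one has $u=1$ while $h=G_{B_{\widetilde\Omega}(\xi,C_\Omega A_3r)}(\cdot,\xi_{16r})$ is arbitrarily small, so a one-shot maximum-principle comparison cannot close. But at exactly that point your proof stops being a proof: the sentence ``one achieves it by iterating the comparison above over the dyadic collars \dots the estimate propagates inward'' is an appeal to the conclusion, not an argument. The paper's iteration rests on three quantitative ingredients, none of which appear in your proposal: (a) a Harnack-chain estimate along inner uniform curves showing that smallness of the Green function forces closeness to the boundary, quantitatively $\epsilon_1\frac{V(\xi,r)}{r^2}G'(x)\geq \epsilon_3\,(d(x,X\setminus Y')/r)^{\sigma}$, which via Lemma \ref{lem:4.12} yields the double-exponential capacity-width bound $w_\eta(V_j)\leq A_4 r\exp(-2^j/\sigma)$ for the sublevel sets $V_j$ of $G'$; (b) a choice of shrinking radii $R_j\downarrow r$ with $R_{j-1}-R_j\asymp j^{-2}r$, so that the sideways-escape term at step $j$, estimated by Lemma \ref{lem:4.13} after passing to the component $\hat D$ of an ambient ball of radius $(R_{j-1}-R_j)/C_\Omega$, is bounded by $\exp(2-\epsilon_6 j^{-2}\exp(2^j/\sigma))$; and (c) the verification that these errors, after being divided by the lower bound $\exp(-2^{j+1})$ for $\epsilon_1\frac{V(\xi,r)}{r^2}G'$ on $U_j$, form a convergent series, which is what produces a finite $A_2$. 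Whether one indexes the iteration by level sets of $G'$ (as the paper does) or by dyadic collars in $\delta_\Omega$ (as you suggest) is a matter of bookkeeping precisely because of the power-law relation in (a); but you never state or prove (a), never choose the shrinking radii needed for (b) (without them the sideways exit at each collar is at an unspecified distance and Lemma \ref{lem:4.13} gives nothing usable), and never perform the summation (c).

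A further sign of the gap is your closing claim that ``choosing $\epsilon$ small enough \dots fixes $A_2$.'' No single smallness parameter can rescue the estimate, since $h$ tends to $0$ at the boundary while the escape bound $\exp(2-a_1/(C_\Omega A_7\epsilon))$ from your one application of Lemma \ref{lem:4.13} is a fixed constant; in the paper's argument no such $\epsilon$-tuning occurs, and the constant $A_2=\epsilon_1 e^2(1+C)$ comes out of the convergent series regardless. The remaining steps of your proposal (the thick-point lower bound on $h$ via Harnack chains and Lemma \ref{lem:4.9}(ii), and the reduction of $\omega_\Gamma(x,S_n)$ to a sphere-exit harmonic measure via Lemma \ref{lem:metrics are comparable}) are sound in spirit and mirror the paper, but the absorption mechanism that constitutes the actual content of Lemma \ref{lem:4.14} is missing.
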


\begin{proof}
We follow \cite[Lemma 2]{Aik01} and \cite[Lemma 4.14]{GyryaSC}.
Let $A_3 = 2(12 + 12 C_u)$ so that all $(c_u,C_u)$-inner uniform paths connecting two points in $B_{\widetilde\Omega}(\xi,12r)$ stay in $B_{\widetilde\Omega}(\xi,A_3 r / 2)$. 
Recall that $Y' = B_{\widetilde\Omega}(\xi,A_0 r)$, where $A_0 = A_3+7$. 
For any $z \in B_{\widetilde\Omega}(\xi,A_3 r)$, set 
 \[  G'(z) =  G_{B_{\widetilde\Omega}(\xi,A_3 r)} (z,\xi_{16r}). \]
Let $s =  \min\{ c_u r, 5r/C_u \}$. 
Since
 \[  B_{\widetilde\Omega}(\xi_{16r},s)  \subset  B_{\widetilde\Omega}(\xi,A_3r) \setminus B_{\widetilde\Omega}(\xi,2r),  \]
the maximum principle yields
 \[  \forall y \in B_{\widetilde\Omega}(\xi,2r), \quad 
     G'(y)  \leq  \sup_{z \in \partial_{\Omega} B_{\widetilde\Omega}(\xi_{16r},s) }  G'(z). \]
Lemma \ref{lem:4.9} and the volume doubling condition yield
 \[  \sup_{z \in \partial_{\Omega} B_{\widetilde\Omega}(\xi_{16r},s) }  G'(z)   \leq  C \frac{r^2}{V(\xi,r)}, \]
for some constant $C>0$.
Hence, there exists $\epsilon_1 > 0$ such that
 \[ \forall y \in B_{\widetilde\Omega}(\xi,2r), \quad 
    \epsilon_1  \frac{V(\xi,r)}{r^2} G'(y)  \leq {e}^{-1}. \] 
Write
\begin{equation*}
 B_{\widetilde\Omega}(\xi,2r) = \bigcup_{j \geq 0} U_j \cap B_{\widetilde\Omega}(\xi,2r),
\end{equation*}
where
\[ U_j = \left\{ x \in Y' : \exp (-2^{j+1}) \leq \epsilon_1  \frac{V(\xi,r)}{r^2} G'(x) < \exp(-2^j) \right\}. \]
Let $ V_j = \left( \bigcup_{k \geq j} U_k \right) \cap B_{\widetilde\Omega}(\xi, 2r)$.
We claim that
\begin{equation} \label{eq:4.14} 
 w_{\eta}(V_j) \leq A_4 r \exp \left( - 2^j / \sigma \right)
\end{equation}
for some constants $A_4, \sigma \in (0,\infty)$.

Suppose $x \in V_j$. Observe that for $z \in \partial_{\Omega} B_{\widetilde\Omega}(\xi_{16r},s)$, by the inner uniformity of the domain, the length of the Harnack chain of balls in $B_{\widetilde\Omega}(\xi, A_3 r) \setminus \{ \xi_{16r} \}$ connecting $x$ to $z$ is at most $A_5 \log (1 + A_6 r / d(x, X \setminus Y'))$ 
for some constants $A_5, A_6 \in (0,\infty)$. Hence, there are constants $\epsilon_2, \epsilon_3, \sigma$ such that
\begin{align*}
 \exp (-2^j) 
& > \epsilon_1 \frac{V(\xi,r)}{r^2} G'(x)
\geq \epsilon_2 \frac{V(\xi,r)}{r^2} G'(z) \left( \frac{ d(x, X \setminus Y') }{ r } \right)^{\sigma} \\
& \geq \epsilon_3 \left( \frac{ d(x, X \setminus Y') }{ r } \right)^{\sigma}.
\end{align*}
The last inequality is obtained by applying \ref{lem:4.9} with $R = A_3r$ and $\delta = 5/A_3$. Now we have that for any $x \in V_j$,
 \[ d(x, X \setminus V_j) \leq d(x, X \setminus Y') \leq (\epsilon_3^{-1/\sigma} \exp(-2^j / \sigma) r) \wedge 2r. \]
This together with Lemma \ref{lem:4.12} yields \eqref{eq:4.14}.

Let $R_0 = 2r$ 
and for $j \geq 1$,
 \[ R_j = \left( 2 - \frac{6}{\pi^2} \sum_{k=1}^j \frac{1}{k^2} \right) r. \]
Then $R_j \downarrow r$ and
\begin{align} \label{eq:4.15}
 \sum_{j=1}^{\infty}  \exp \left( 2^{j+1} - \frac{ a_1 (R_{j-1} - R_j) }{ A_4 r \exp(-2^j / \sigma) } \right)
& = \sum_{j=1}^{\infty}  \exp \left( 2^{j+1} - \frac{ 6 a_1 }{ A_4 \pi^2 } j^{-2} \exp(2^j / \sigma) \right) \nonumber\\
& \leq \sum_{j=1}^{\infty}  \exp \left( 2^{j+1} - \frac{ 3 a_1 }{ C_{\Omega} A_4 \pi^2 } j^{-2} \exp(2^j / \sigma) \right) \nonumber \\
& < C < \infty.
\end{align}
Let $\omega_0 = \omega( \cdot, \Omega \cap \partial_{\Omega} B_{\widetilde\Omega}(\xi, 2r), B_{\widetilde\Omega}(\xi, 2r))$ and
 \[ d_j = \begin{cases} 
\sup \left\{ \frac{ r^2 \omega_0(x) }{ V(\xi,r) G'(x) }  : x \in U_j \cap B_{\widetilde\Omega}(\xi,R_j) \right\},  \quad 
    & \textrm{if } U_j \cap B_{\widetilde\Omega}(\xi, R_j) \neq \emptyset, \\
0,  & \textrm{if } U_j \cap B_{\widetilde\Omega}(\xi, R_j) = \emptyset.
\end{cases}
 \]
Since the sets $U_j \cap B_{\widetilde\Omega}(\xi,2r)$ cover $B_{\widetilde\Omega}(\xi,2r)$ and $B_{\widetilde\Omega}(\xi,r) \subset B_{\widetilde\Omega}(\xi,R_k)$ for each $k$, to prove Lemma \ref{lem:4.14}, it suffices to show that
 \[  \sup_{j \geq 0} d_j \leq A_2 < \infty \]
where $A_2$ is as in Lemma \ref{lem:4.14}.

We proceed by iteration. Since $\omega_0 \leq 1$, we have by definition of $U_0$,
 \[  d_0 = \sup_{ U_0 \cap B_{\widetilde\Omega}(\xi,2r) }  \frac{ r^2 \omega_0(x) }{ V(\xi,r) G'(x) }  \leq  \epsilon_1 {e}^2. \]
Let $j > 0$. For $x \in U_{j-1} \cap B_{\widetilde\Omega}(\xi, R_{j-1})$, we have by definition of $d_{j-1}$ that
 \[ \omega_0(x) \leq d_{j-1} \frac{  V(\xi,r) }{ r^2 } G'(x). \]
Also, $\omega_0 \leq 1$. Thus, the maximum principle yields that, for $x \in V_j \cap B_{\widetilde\Omega}(\xi,R_{j-1})$,
\begin{equation} \label{eq:4.16}
 \omega_0(x)  
\leq \omega( x, V_j \cap \partial_{\Omega} B_{\widetilde\Omega}(\xi,R_{j-1}), V_j \cap B_{\widetilde\Omega}(\xi,R_{j-1}) ) 
     + d_{j-1} \frac{ V(\xi,r) }{ r^2 } G'(x).
\end{equation}
For $x \in V_j \cap B_{\widetilde\Omega}(\xi,R_j)$, let $D = B(p(x),C_{\Omega}^{-1}(R_{j-1}-R_j))$ and let $D'$ be the connected component of $\widetilde \Omega$ that contains $x$ and so that $D' \cap \Omega$ is a connected component of $D \cap \Omega$. 
Let $\hat D = Y' \cap \Omega \cap D'$. Then by Lemma \ref{lem:metrics are comparable},
 \[ \hat D \subset D' \subset B_{\widetilde\Omega}(x,R_{j-1} - R_j) \subset B_{\widetilde\Omega}(\xi,R_{j-1}), \]
hence $\hat D \cap \partial_{\Omega} B_{\widetilde\Omega}(\xi,R_{j-1}) = \emptyset$. Thus, the first term on the right hand side of \eqref{eq:4.16} is not greater than
\begin{align*} 
& \omega \left(x, V_j \cap \hat D \cap \partial_X B\left(p(x),\frac{R_{j-1}-R_j}{2C_{\Omega}} \right), V_j \cap \hat D \cap B\left(p(x),\frac{R_{j-1}-R_j}{2C_{\Omega}}\right) \right) \\
& \leq \exp \left( 2 - \frac{a_1}{2C_{\Omega}} \frac{ R_{j-1} - R_j }{ w_{\eta}(V_j \cap \hat D) } \right) \\ 
& \leq \exp \left( 2 - \frac{a_1}{2C_{\Omega}} \frac{ R_{j-1} - R_j }{ w_{\eta}(V_j) } \right) \\
& \leq \exp \left( 2 - \frac{a_1}{2C_{\Omega} A_4} \exp( 2^j / \sigma) \frac{ R_{j-1} - R_j }{ r } \right) \\ 
& \leq \exp \left( 2 - \epsilon_6 j^{-2} \exp( 2^j / \sigma) \right)
\end{align*}
by Lemma \ref{lem:4.13}, monotonicity of $U \mapsto w_{\eta}(U)$ and \eqref{eq:4.14}. Here $\epsilon_6 = \frac{3 a_1}{\pi^2 A_4 C_{\Omega}}$, and $\partial_X B$ denotes the boundary of $B$ in $(X,d)$. 
Moreover, by definition of $U_j$,
 \[ \epsilon_1 \frac{V(\xi,r)}{r^2} G'(x) \geq \exp(-2^{j+1}) \]
for $x \in U_j$. Hence, for $x \in U_j \cap B_{\widetilde\Omega}(\xi,R_j)$, \eqref{eq:4.16} becomes
\begin{align*}
\omega_0(x)  & \leq  \exp \left( 2 - \epsilon_6 j^{-2} \exp( 2^j / \sigma) \right) 
                   + d_{j-1} \frac{ V(\xi,r) }{ r^2 } G'(x)  \\
             & \leq  \left( \epsilon_1 \exp \left( 2 + 2^{j+1} - \epsilon_6 j^{-2} \exp( 2^j / \sigma) \right) 
                   + d_{j-1} \right) \frac{ V(\xi,r) }{ r^2 } G'(x).
\end{align*}
Dividing both sides by $\frac{ V(\xi,r) }{ r^2 } G'(x)$ and taking the supremum over all points
$x \in U_j \cap B_{\widetilde\Omega}(\xi,R_j)$,
 \[ d_j  \leq   \epsilon_1 \exp \left( 2 + 2^{j+1} - \epsilon_6 j^{-2} \exp( 2^j / \sigma) \right) 
                   + d_{j-1}, \]
and hence for every integer $i > 0$,
 \[ d_i  \leq  \epsilon_1 {e}^2 \bigg( 1 + \sum_{j=1}^{\infty} \exp  \left(2^{j+1} - \frac{ 3 a_1 }{ \pi^2 A_4 C_{\Omega}} j^{-2} \exp( 2^j / \sigma) \right) \bigg) = \epsilon_1 {e}^2 (1 + C) < \infty \]
by \eqref{eq:4.15}. 
\end{proof}

\begin{proof}[Proof of Theorem \ref{thm:bHP with GF's}]
We follow \cite[Theorem 4.5]{GyryaSC} and \cite[Lemma 3]{Aik01}.
Recall that $A_0 = A_3 + 7 = 2(12 + 12 C_u) + 7$. 
Fix $\xi \in \widetilde\Omega \setminus \Omega$ with $R_{\xi} > 0$, let $0 < r < R \leq \inf \{ R_{\xi'} : \xi' \in B_{\widetilde\Omega}(\xi,7R_{\xi}) \setminus \Omega \}$ and set $Y' = B_{\widetilde\Omega}(\xi, A_0 r)$. 
Note that any two points in $B_{\widetilde\Omega}(\xi,12r)$ can be connected by a $(c_u, C_U)$-inner uniform path that stays in $B_{\widetilde\Omega}(\xi, A_3 r/2)$. 

Fix $x^* \in B_{\Omega}(\xi,r)$, $y^* \in \Omega \cap \partial_{\Omega} B_{\widetilde\Omega}(\xi,6r)$ such that $c_1 r \leq d(x^*,\widetilde\Omega \setminus \Omega) \leq r$ and $6c_0 r \leq d(y^*,\widetilde\Omega \setminus \Omega) \leq 6r$, for some constants $c_0,c_1 \in (0,1)$ depending on $c_u$ and $C_u$. Existence of $x^*$ and $y^*$ follows from the inner uniformity of $\Omega$. It suffices to show that for all $x \in B_{\Omega}(\xi,r)$ and $y \in \Omega \cap \partial_{\Omega} B_{\widetilde\Omega}(\xi,6r)$ we have
\begin{equation} \label{eq:4.17}
  G_{Y'}(x,y)  \asymp  \frac{  G_{Y'}(x^*,y) }{  G_{Y'}(x^*,y^*) }  G_{Y'}(x,y^*). 
\end{equation}

Fix $y \in \Omega \cap \partial_{\Omega} B_{\widetilde\Omega}(\xi,6r)$, and call $u$ ($v$, respectively) the left(right)-hand side of \eqref{eq:4.17}, viewed as a function of $x$. Then $u$ is positive and $L^*$-harmonic in $Y' \setminus \{ y \}$, whereas $v$ is positive and $L^*$-harmonic in $Y' \setminus \{ y^* \}$. Both functions vanish quasi-everywhere on the boundary of $Y'$. 

Since $y^* \in \Omega \cap \partial_{\Omega} B_{\widetilde\Omega}(\xi,6r)$ and $6c_0 r \leq d(y^*,\widetilde\Omega \setminus \Omega) \leq 6r$, it follows that the ball $B_{\widetilde\Omega}(y^*,3 c_0 r)$ is contained in $B_{\widetilde\Omega}(\xi,9r) \setminus B_{\widetilde\Omega}(\xi,3r)$. Let $z \in \partial_{\Omega} B_{\widetilde\Omega}(y^*, c_0 r)$. By a repeated use of Harnack inequality (a finite number of times, depending only on $c_u$ and $C_u$), one can compare the value of $v$ at $z$ and at $x^*$, so that by Lemma \ref{lem:4.9} (notice that $d(x^*,y) \geq c_1 r$) and the volume doubling property,
 \[ v(z) \leq C v(x^*) = C G_{Y'}(x^*,y) \leq C' \frac{r^2}{V(\xi,r)}. \]
Now, if $y \in B_{\widetilde\Omega}(y^*, 2c_0r)$, then by Lemma \ref{lem:4.9} (notice that $d_{\Omega}(z,y) \leq 3r \leq \frac{A_0 r}{6C_u}$ and $z,y \in B_{\widetilde\Omega}(\xi,A_0 r/6)$) and the volume doubling property,
 \[ u(z) =  G_{Y'}(z,y) \geq c \frac{r^2}{V(\xi,r)}, \]
so that we have $u(z) \geq c' v(z)$ in this case for some $c' > 0$. If instead $y \in \Omega \setminus B_{\widetilde\Omega}(y^*, 2c_0r)$, then we can connect $z$ and $x^*$ by a path of length comparable to $r$ that stays away (at scale $r$) from both $\widetilde\Omega \setminus \Omega$ and the point $y$. Hence, the Harnack inequality implies that $u(z) \asymp u(x^*) = v(x^*) \asymp v(z)$ in this case. This shows that we always have
 \[ u(z) \geq \epsilon_3 v(z)  \quad \forall z \in \partial_{\Omega} B_{\Omega}(y^*, c_0 r). \]
By the maximum principle, we obtain
 \[ u \geq \epsilon_3 v  \quad \textrm{ on } Y' \setminus B_{\widetilde\Omega}(y^*,c_0 r). \]
Since $B_{\Omega}(\xi,r) \subset Y' \setminus B_{\widetilde\Omega}(y^*,c_0 r)$, we have proved that $u \geq \epsilon_3 v$ on $B_{\widetilde\Omega}(\xi,r)$, that is,
\begin{equation}
  G_{Y'}(x,y) \geq \epsilon_3 \frac{  G_{Y'}(x^*,y) }{  G_{Y'}(x^*,y^*) }  G_{Y'}(x,y^*)
\end{equation}
for all $x \in B_{\Omega}(\xi,r)$ and $y \in \Omega \cap \partial_{\Omega} B_{\widetilde\Omega}(\xi,6r)$. This is one half of \eqref{eq:4.17}.

We now focus on the other half of \eqref{eq:4.17}, that is,
\begin{equation} \label{eq:4.19}
 \epsilon_4  G_{Y'}(x,y) \leq \frac{  G_{Y'}(x^*,y) }{  G_{Y'}(x^*,y^*) }  G_{Y'}(x,y^*),
\end{equation}
for all $x \in B_{\Omega}(\xi,r)$ and $y \in \Omega \cap \partial_{\Omega} B_{\widetilde\Omega}(\xi,6r)$.

For $x \in B_{\Omega}(\xi,2r)$ and $z \in B_{\widetilde\Omega}(\xi,9r) \setminus B_{\widetilde\Omega}(\xi,3r)$, Lemma \ref{lem:4.9} and the volume doubling condition yield
 \[  G_{Y'}(x,z) \leq C \frac{r^2}{V(\xi,r)}. \]
Regarding $ G_{Y'}(x,z)$ as $L$-harmonic function of $x$, the maximum principle gives
 \[  G_{Y'}(\cdot,z) \leq C \frac{ r^2 }{ V(\xi,r) } \omega( \cdot, \Omega \cap \partial_{\Omega} B_{\widetilde\Omega}(\xi,2r), B_{\widetilde\Omega}(\xi,2r))
\quad \textrm{ on }  B_{\widetilde\Omega}(\xi,2r). \]
Using Lemma \ref{lem:4.14} (note that $A_0 > A_3$) and the Harnack inequality (to move from $\xi_{16r}$ to $y^*$), we get for $x \in B_{\Omega}(\xi,r)$ and $z \in B_{\widetilde\Omega}(\xi,9r) \setminus B_{\widetilde\Omega}(\xi,3r)$, that
\begin{equation} \label{eq:4.20}
  G_{Y'}(x,z)  \leq  C A_2 \frac{r^2}{V(\xi,r)} \frac{V(\xi,r)}{r^2}  G_{Y'}(x,\xi_{16r})  \leq  C'  G_{Y'}(x,y^*),
\end{equation}
for some constant $C'\in(0,\infty)$.
Fix $x \in B_{\Omega}(\xi,r)$ and $y \in \Omega \cap \partial_{\Omega} B_{\widetilde\Omega}(\xi,6r)$. If $d_{\Omega}(y, X \setminus \Omega) \geq c_0 r/2$, then $ G_{Y'}(x,y) \asymp  G_{Y'}(x,y^*)$ and $ G_{Y'}(x^*,y) \asymp  G_{Y'}(x^*,y^*)$ by the Harnack inequality, so that \eqref{eq:4.19} follows. Hence we now assume that $y \in \Omega \cap \partial_{\Omega} B_{\widetilde\Omega}(\xi,6r)$ satisfies $d_{\Omega}(y, X \setminus \Omega) < c_0 r/2$. Let $\xi' \in \widetilde\Omega \setminus \Omega$ be a point such that $d_{\Omega}(y,\xi') < c_0 r/2$. It follows that $y \in B_{\Omega}(\xi',r)$. Also,
 \[ B_{\widetilde\Omega}(\xi',2r) \subset B_{\widetilde\Omega}(y,3r) \subset B_{\widetilde\Omega}(\xi,9r) \setminus B_{\widetilde\Omega}(\xi,3r). \]
We apply inequality \eqref{eq:4.20} to get $ G_{Y'}(x,z) \leq C_4  G_{Y'}(x,y^*)$ for any $z \in B_{\widetilde\Omega}(\xi',2r)$. Regarding $ G_{Y'}(x,y) =  G_{Y'}^*(y,x)$ as $ L^*$-harmonic function of $y$, we obtain
\begin{equation} \label{eq:4.21}
  G_{Y'}(x,y) \leq C_4  G_{Y'}(x,y^*) \, \omega^*( y, \Omega \cap \partial_{\Omega} B_{\widetilde\Omega}(\xi',2r), B_{\widetilde\Omega}(\xi',2r) ).
\end{equation}
Let us apply Lemma \ref{lem:4.14} with $\xi$ replaced by $\xi'$. This yields
\begin{align}
 \omega^* (y, \Omega \cap \partial_{\Omega} B_{\widetilde\Omega}(\xi',2r), B_{\widetilde\Omega}(\xi',2r) )
& \leq  A_2 \frac{V(\xi',r)}{r^2}  G^*_{B_{\widetilde\Omega}(\xi',C_{\Omega} A_3r)}(y,\xi'_{16r}) \nonumber \\ 
& \leq  A_2' \frac{V(\xi,r)}{r^2}  G_{Y'}(\xi'_{16r},y),  \label{eq:4.22}
\end{align}
where $\xi'_{16r} \in \Omega$ is any point such that $d_{\Omega}(\xi'_{16r},\xi') = 4r$ and $d(\xi'_{16r}, X \setminus \Omega) \geq 2 c_u r$. Observe that we have used the volume doubling property as well as the set monotonicity of the Green function, and that $B_{\widetilde\Omega}(\xi',A_3r) \subset B_{\widetilde\Omega}(\xi,A_0r)$ because $A_0 = A_3 + 7$ and $d_{\Omega}(\xi,\xi') \leq 7r$. Now, \eqref{eq:4.21} and \eqref{eq:4.22} give
\begin{equation} \label{eq:zettel}
  G_{Y'}(x,y) \leq C_5 \frac{V(\xi,r)}{r^2}  G_{Y'}(\xi'_{16r},y)  G_{Y'}(x,y^*).
\end{equation}
By construction, $d_{\Omega}(\xi'_{16r},y) \geq d(\xi'_{16r},\xi') - d_{\Omega}(\xi',y) \geq 2r$ and $d_{\Omega}(x^*,y) \geq d_{\Omega}(\xi,y) - d_{\Omega}(\xi,x^*) \geq 5r$. Using the inner uniformity of $\Omega$, we find a chain of balls, each of radius $\asymp r$ and contained in $Y' \setminus \{ y \}$, going from $x^*$ to $\xi'_{16r}$, so that the length of the chain is uniformly bounded in terms of $c_u, C_u$. Applying the Harnack inequality repeatedly thus yields $ G_{Y'}(\xi'_{16r},y) \asymp  G_{Y'}(x^*,y)$. As Lemma \ref{lem:4.9} gives $ G_{Y'}(x^*,y^*) \asymp r^2 / V(\xi,r)$, inequality \eqref{eq:zettel} implies \eqref{eq:4.19}. This completes the proof. 
\end{proof}

\bibliographystyle{siam}
\bibliography{bibliography}
\end{document}